\newtheorem{theorem}{Theorem}
\theoremstyle{plain}
\newtheorem{claim}{Claim}
\newtheorem{conjecture}{Conjecture}
\newtheorem{definition}{Definition}
\newtheorem{lemma}{Lemma}
\newtheorem{problem}{Problem}
\newtheorem{proposition}{Proposition}
\newtheorem{remark}{Remark}
\numberwithin{equation}{section}
\begin{document}
\title[ ]{Local Zeta Functions Supported on Analytic Submanifolds and Newton Polyhedra}
\author{W. A. Zu\~{n}iga-Galindo}
\address{Centro de Investigacion y de Estudios Avanzados del I.P.N., Departamento de
Matematicas, Av. Instituto Politecnico Nacional 2508, Col. San Pedro
Zacatenco, Mexico D.F., C.P. 07360, Mexico}
\email{wzuniga@math.cinvestav.edu}
\keywords{Igusa zeta function, p-adic fields, exponential sums, congruences in many
variables, Newton polyhedra, complete intersection varieties.}

\begin{abstract}
The local zeta functions (also called Igusa's zeta functions) over $p$-adic
fields are connected with the number of solutions of congruences and
exponential sums mod $p^{m}$. These zeta functions are defined as integrals
over open and compact subsets with respect to the Haar measure. In this paper,
we introduce new integrals defined over submanifolds, \ or \ \ more
\ \ generally, \ \ over\ \ \ non-degenerate \ \ complete \ intersection
\ varieties, and study their connections with some arithmetical \ problems
such as estimation of \ exponential \ sums \ mod $p^{m}$. In particular we
\ extend \ Igusa's \ method \ for estimating \ exponential \ sums mod $p^{m}$
to the case of \ exponential \ sums \ mod $p^{m}$ along non-degenerate smooth varieties.

\end{abstract}
\maketitle

\section{Introduction}

Let $K$ be a $p-$adic field, i.e. $[K:\mathbb{Q}_{p}]<\infty$. Let $R_{K}$\ be
the valuation ring of $K$, $P_{K}$ the maximal ideal of $R_{K}$, and
$\overline{K}=R_{K}/P_{K}$ \ the residue field of $K$. The cardinality of the
residue field of $K$ is denoted by $q$, thus $\overline{K}=\mathbb{F}_{q}$.
For $z\in K$, $ord\left(  z\right)  \in\mathbb{Z}\cup\{+\infty\}$ \ denotes
the valuation of $z$, and $\left\vert z\right\vert _{K}=q^{-ord\left(
z\right)  }$, $ac$ $z=z\pi^{-ord(z)}$, where $\pi$ is a fixed uniformizing
parameter of $R_{K}$.

Let $f_{1},\ldots,f_{l}$ be polynomials in $K\left[  x_{1},\ldots
,x_{n}\right]  $, or, more generally, $K-$ana\-ly\-tic functions on an open
and compact set $U\subset K^{n}$. For $1\leq j\leq l$ we define the
$K-$analytic set $V^{(j)}(K):=V^{(j)}=\left\{  x\in U\mid f_{i}\left(
x\right)  =0,\text{ }1\leq i\leq j\right\}  $. We assume \ that $V^{(l-1)}$ is
a \ closed submanifold of $U$, i.e. $rank_{K}\left(  \frac{\partial f_{i}%
}{\partial x_{j}}(z)\right)  =l-1$ for any $z\in V^{(l-1)}$, and that
$f_{l}:V^{(l-1)}\rightarrow K$ is an analytic function on $V^{(l-1)}$. Let
$\Phi:K^{n}\rightarrow\mathbb{C}$ \ be a Bruhat-Schwartz function (with
support in $U$ in the second case). Let $\omega$ be a quasicharacter of
$K^{\times}$. To these data we associate the following local zeta function:
\begin{align*}
Z_{\Phi}(\omega,f_{1},\ldots,f_{l},K)  &  :=Z_{\Phi}(\omega,V^{(l-1)},f_{l})\\
&  =\int\limits_{V^{(l-1)}(K)}\Phi\left(  x\right)  \omega\left(  \text{
}f_{l}(x)\right)  \mid\gamma_{GL}\left(  x\right)  \mid,
\end{align*}
for $\omega\in\Omega_{0}\left(  K^{\times}\right)  $, where $\mid\gamma
_{GL}\left(  x\right)  \mid$ is the measure induced by a Gel'fand-Leray form
on $V^{(l-1)}(K)$ (see Section 2). In this paper we provide a geometric
description of the poles of the meromorphic continuation of $Z_{\Phi}%
(\omega,f_{1},\ldots,f_{l},K)$ when $f_{1},\ldots,f_{l}$ are non-degenerate
with respect to their Newton polyhedra (see Theorem \ref{theorem1} and Remark
\ref{remark5}). The relevance of this problem is well-understood in the case
in which $V^{(l-1)}$ is an open subset of $K^{n}$ (see e.g. \cite{D0},
\cite{D1a}, \cite{D-H}, \cite{L-M}, \cite{M1}, \cite{S-Z}, \cite{V1a},
\cite{V2}, \cite{V-Z}, \cite{Z0}, \cite{Z1}, \cite{Z2}).

The main motivation for this paper is the estimation of exponential sums of
the type
\[
E(z):=q^{-m(n-l+1)}%
%TCIMACRO{\tsum \nolimits_{x\in V^{(l-1)}\left(  R_{K}/P_{K}^{m}\right)  }}%
%BeginExpansion
{\textstyle\sum\nolimits_{x\in V^{(l-1)}\left(  R_{K}/P_{K}^{m}\right)  }}
%EndExpansion
\Psi\left(  zf_{j+1}(x)\right)  ,
\]
where $\left\vert z\right\vert _{K}=q^{m\text{ }}$ with $m\in\mathbb{N}$,
$\Psi\left(  \cdot\right)  $ is the standard additive character of $K$, and
$V^{(l-1)}$ is a smooth algebraic variety defined over $R_{K}$. In \cite{Ka},
(see also \cite{D-F}, \cite{Z1A}), Katz gave a stationary phase formula for
$E(z)$ when $f_{l}$ has Morse singularities $\operatorname{mod}$ $P_{K}$. In
\cite{Mor}\ Moreno proposed to extend Igusa's method (see e.g. \cite{D0},
\cite{I2}) to exponential sums of type $E(z)$.

A more \ general problem is to estimate oscillatory integrals of type%

\[
E_{\Phi}(z,V^{(l-1)},f_{l},K):=E_{\Phi}(z,V^{(l-1)},f_{l})=\int
\limits_{V^{(l-1)}(K)}\Phi\left(  x\right)  \Psi\left(  zf_{l}(x)\right)
\mid\gamma_{GL}\left(  x\right)  \mid,
\]
for $\left\vert z\right\vert _{K}\gg0$. Indeed, if $V^{(l-1)}\left(  K\right)
$ has good reduction $\operatorname{mod}$ $P_{K}$, then $E(z)$ can be
expressed as an integral of the previous type (cf. Lemma \ref{lemma4}). The
relevance of studying integrals of type $E_{\Phi}(z,V^{(l-1)},f_{l})$ was
pointed out in \cite{Kazh} by Kazhdan. In this paper we extend Igusa's method
to oscillatory integrals of type $E_{\Phi}(z,V^{(l-1)},f_{l})$ when $f_{i}$,
$i=1,\ldots,l$ are non-degenerate with respect to their Newton polyhedra; more
precisely, we show the existence of an asymptotic expansion for $E_{\Phi
}(z,V^{(l-1)},f_{l})$, $\left\vert z\right\vert _{K}\gg0$, which is controlled
by the poles of $Z_{\Phi}(\omega,V^{(l-1)},f_{l})$ (see Theorems
\ref{theorem2}, \ref{theorem3}, Remark \ref{remark10}, and Theorem
\ref{theorem6}). We are also able to estimate the number of solutions of a
polynomial congruence over a smooth algebraic variety (see Theorem
\ref{theorem5}).

At this point, it is worth to mention \ that there are other zeta functions
supported on analytic sets. The na\"{\i}ve local zeta function supported on a
submanifold is defined as
\[
Z_{\Phi}^{\text{na\"{\i}ve}}\left(  \omega,V^{(l-1)},f_{l}\right)
=\int\limits_{K^{n}}\Phi\left(  x\right)  \delta\left(  f_{1}(x),\ldots
,f_{l-1}(x)\right)  \omega\left(  \text{ }f_{l}(x)\right)  \mid dx\mid,
\]
for $\omega\in\Omega_{0}\left(  K^{\times}\right)  $, where $\delta\left(
\cdot\right)  $ is the Dirac delta function and $\mid dx\mid$ is the Haar
measure of $K^{n}$ normalized so that volume of $R_{K}^{n}$ is one. The
definition of `integrals' of type $Z_{\Phi}^{\text{na\"{\i}ve}}\left(
\omega,V^{(l-1)},f_{l}\right)  $\ is based on the fact that an expression of
form $\int_{K^{n}}\Phi\left(  x\right)  \delta\left(  f_{1}(x),\ldots
,f_{l-1}(x)\right)  \mid dx\mid$ gives a well-defined linear functional on the
Bruhat-Schwartz space if $V^{(l-1)}$\ is a submanifold (see \cite{G-S} and
Section \ref{DeltaSect}). Furthermore, $Z_{\Phi}^{\text{na\"{\i}ve}}\left(
\omega,V^{(l-1)},f_{l}\right)  =Z_{\Phi}(\omega,V^{(l-1)},f_{l})$ (see Lemma
\ref{lemma1}).

In \cite{Has}\ Hashimoto studied local zeta functions on $\mathbb{R}^{n}$ and
$l=2$ supported on analytic sets, in the case in which $V^{(1)}\left(
\mathbb{R}\right)  $ is a submanifold. Hashimoto showed the existence of an
asymptotic expansion for an oscillanting integral supported on $V^{(1)}\left(
\mathbb{R}\right)  $ which is controlled by the poles of local zeta functions
(\cite[Theorem 14]{Has}). This result is an extension of Varchenko's result on
oscillanting integrals and Newton \ polyhedra \cite{Var}. Hashimoto asserts
that a similar result holds when $V^{(1)}\left(  \mathbb{R}\right)  $\ is
singular, more precisely, if $V^{(1)}\left(  \mathbb{R}\right)  $ \ is a
non-degenerate \ complete intersection singularity at the origin
(\cite[Theorem 27]{Has}). However, Hashimoto does not prove that the
na\"{\i}ve \ local zeta functions are `true integrals' on some half-plane of
the complex plane. To the best knowledge of the author, this is a crucial
point to establish an asymptotic expansion for oscillanting integrals
supported on an analytic subset. In the real and $p$-adic cases, when
$V^{(l-1)}$ is singular, `integrals' of type $Z_{\Phi}^{\text{na\"{\i}ve}%
}\left(  \omega,V^{(l-1)},f_{l}\right)  $ are just `symbols'. However, under
suitable hypotheses on $V^{(l-1)}$\ and by using a toroidal resolution of
singularities, it is possible to associate to a `symbol' of type $Z_{\Phi
}^{\text{na\"{\i}ve}}\left(  \omega,V^{(l-1)},f_{l}\right)  $ a meromorphic
function, which depends on the choice of the resolution.

In the $p$-adic case, to circumvent the above mentioned problem, we introduce
the following local zeta function:%

\begin{align*}
\mathcal{Z}_{\Phi}\left(  s,V^{(l-1)},f_{l}\right)   &  :=\lim_{r\rightarrow
+\infty}%
%TCIMACRO{\dint \limits_{K^{n}}}%
%BeginExpansion
{\displaystyle\int\limits_{K^{n}}}
%EndExpansion
\Phi\left(  x\right)  \delta_{r}\left(  f_{1}(x),\ldots,f_{l-1}(x)\right)
\left\vert f_{l}\left(  x\right)  \right\vert _{K}^{s}\mid dx\mid\\
&  =\lim_{r\rightarrow+\infty}%
%TCIMACRO{\dint \limits_{S_{r}}}%
%BeginExpansion
{\displaystyle\int\limits_{S_{r}}}
%EndExpansion
\Phi\left(  x\right)  q^{r\left(  l-1\right)  }\left\vert f_{l}\left(
x\right)  \right\vert _{K}^{s}\mid dx\mid,
\end{align*}
where $\delta_{r}$ is a sequence of functions satisfying $\lim_{r\rightarrow
+\infty}\delta_{r}=\delta$, $s\in\mathbb{C}$ with $\operatorname{Re}(s)>0$,
and $S_{r}:=\left\{  x\in K^{n}\mid ord(f_{i}(x))\geqslant r\text{,
}i=1,\ldots,l-1\right\}  $. The integrals of type $\mathcal{Z}_{\Phi}\left(
s,V^{(l-1)},f_{l}\right)  $ are limits of the integrals considered by Denef in
\cite{D1}.

In the case in which $f_{i}$, $i=1,\ldots,l$ are polynomials defined over
$R_{K}$ which are non-degenerate with respect to their Newton polyhedra mod
$P_{K}$, we show \ that $\mathcal{Z}_{\Phi}\left(  s,V^{(l-1)},f_{l}\right)  $
\ defines a holomorphic function for $\operatorname{Re}(s)$ sufficiently big,
in addition, it has a meromorphic continuation to the complex plane as a
rational function of $q^{-s}$ which can be computed in terms of the Newton
polyhedra of the $f_{i}$, see Theorem \ref{theorem7}. The zeta functions
$\mathcal{Z}_{\Phi}\left(  s,V^{(l-1)},f_{l}\right)  $\ may have poles with
positive real parts (see Example \ref{Example2}), and several examples
\ suggest that the real parts of the poles are eigenvalues of the $l$-th
principal monodromy introduced by Oka (see Example \ref{example1}, Conjecture
\ref{conj}, and references \cite{O1}-\cite{O2}).

Finally, we want to mention that there are several open questions, problems
and conjectures connected with the local zeta functions introduced in this
paper. We have formulated some of them along this paper.

\textbf{Acknowledgement.} The author thanks to the anonymous referees for
their careful reading of the original version this of paper, and for several
useful comments and notes that helped him to improve the original version.

\textbf{Remark} The version to be pusblished in IMRN contains two figures.

\section{Local Zeta Functions Supported on Analytic Submanifolds}

Let $f_{i}$ be a polynomial in $K\left[  x_{1},\ldots,x_{n}\right]  $,
$f_{i}\left(  0\right)  =0$, or, more generally, a $K-$ana\-ly\-tic function
on an open and compact set $U\subset K^{n}$, for $i=1,\ldots,l$. We assume
that $\ 2\leq l\leq n$ and set%
\[
V^{(j)}(K)=V^{(j)}=\left\{  x\in U\mid f_{i}\left(  x\right)  =0,\text{ }1\leq
i\leq j\right\}  ,
\]
as in the introduction. From now on, we will assume that $V^{(j)}(K)$ is a
closed submanifold of $U$ of dimension $n-j$. We refer the reader to
\cite{I1}, \cite{Ser}\ for general results on $K$-analytic manifolds.

\begin{remark}
\label{remark-1}All the $K$-analytic functions on an open set $U$ which are
considered in this paper are given by one power series which is convergent on
the whole set $U$. Since $U$ is totally disconnected, we can take a finite
number of open subsets $U_{i}\subset U$, which form a partition of $U$, and if
we define on each $U_{i}$ an analytic function by using a power series $f_{i}%
$, we obtain an analytic function on $U$.
\end{remark}

Let $\Phi:K^{n}\rightarrow\mathbb{C}$ \ be a Bruhat-Schwartz function (with
support in $U$ in the second case). Let $S(K^{n})$ be the $\mathbb{C}$-vector
space of Bruhat-Schwartz functions.

Let $\omega$ be a quasicharacter of $K^{\times}$, i.e., a continuous
homomorphism from $K^{\times}$ into $\mathbb{C}^{\times}$. The set of
quasicharacters form an Abelian group denoted as $\Omega\left(  K^{\times
}\right)  $. We define an element $\omega_{s}$ of $\Omega\left(  K^{\times
}\right)  $ for every $s\in\mathbb{C}$ as $\omega_{s}\left(  x\right)
=\left\vert x\right\vert _{K}^{s}=q^{-sord\left(  x\right)  }$. If, for every
$\omega$ in $\Omega\left(  K^{\times}\right)  $, we choose $s\in\mathbb{C}%
$\ satisfying $\omega\left(  \pi\right)  =q^{-s}$, then $\omega\left(
x\right)  =\omega_{s}\left(  x\right)  \chi\left(  ac\text{ }x\right)  $ in
which $\chi:=\omega\mid_{R_{K}^{\times}}$. Hence $\Omega\left(  K^{\times
}\right)  $ is isomorphic to $\mathbb{C\times}\left(  R_{K}^{\times}\right)
^{\ast}$, where $\left(  R_{K}^{\times}\right)  ^{\ast}$ is the group of
characters of $R_{K}^{\times}$, and $\Omega\left(  K^{\times}\right)  $ is a
one dimensional complex manifold. We note that $\sigma\left(  \omega\right)
:=\operatorname{Re}(s)$ depends only on $\omega$, and $\left\vert
\omega\left(  x\right)  \right\vert =\omega_{\sigma\left(  \omega\right)
}\left(  x\right)  $. We define an open subset of $\Omega\left(  K^{\times
}\right)  $ by
\[
\Omega_{\sigma}\left(  K^{\times}\right)  =\left\{  \omega\in\Omega\left(
K^{\times}\right)  \mid\sigma\left(  \omega\right)  >\sigma\right\}  .
\]
For further details we refer the reader to \cite{I2}. To above data we
associate the following local zeta function:
\begin{align*}
Z_{\Phi}(\omega,f_{1},\ldots,f_{l},K)  &  :=Z_{\Phi}(\omega,V^{(l-1)},f_{l})\\
&  =\int\limits_{V^{(l-1)}\left(  K\right)  }\Phi\left(  x\right)
\omega\left(  f_{l}(x)\right)  \mid\gamma_{GL}\left(  x\right)  \mid,
\end{align*}
for $\omega\in\Omega_{0}\left(  K^{\times}\right)  $, where $\mid\gamma
_{GL}\left(  x\right)  \mid$ is the measure induced on $V^{(l-1)}(K)$ by a
Gel'fand-Leray differential form, i.e., by a form satisfying $\gamma
_{GL}\wedge\wedge_{i=1}^{l-1}df_{i}=\wedge_{i=1}^{n}dx_{i}$. The
Gel'fand-Leray form is not unique, but $Z_{\Phi}(\omega,V^{(l-1)},f_{l})$ is
well-defined because the restriction of $\gamma_{GL}$ to $V^{(l-1)}$ is
independent of the choice of $\gamma_{GL}$, see \cite[Chap. III, Sect.
1-9]{G-S}. We warn the reader that $Z_{\Phi}(\omega,V^{(l-1)},f_{l})$ depends
on $f_{1},\ldots,f_{l}$ and not only on $V^{(l-1)}$ and $f_{l}$.

\begin{proposition}
\label{proposition0}The zeta function $Z_{\Phi}(\omega,V^{(l-1)},f_{l})$ is
holomorphic on $\Omega_{0}\left(  K^{\times}\right)  $.In addition, it has a
meromorphic continuation to the whole $\Omega\left(  K^{\times}\right)  $ as a
rational function of $t=\omega\left(  \pi\right)  $. The real parts of the
poles of the meromorphic continuation are negative rational numbers.
\end{proposition}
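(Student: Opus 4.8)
The plan is to reduce the statement to a finite sum of standard monomial integrals via resolution of singularities, exactly as in the classical Igusa theory, the only new ingredient being the presence of the Gel'fand–Leray measure on the submanifold $V^{(l-1)}$. First I would localize: since $\Phi$ is locally constant with compact support and $V^{(l-1)}$ is a closed submanifold of $U$, by the implicit function theorem (in its $K$-analytic form, see \cite{I1}, \cite{Ser}) one may cover the support of $\Phi$ restricted to $V^{(l-1)}$ by finitely many open compact polydiscs on each of which $l-1$ of the coordinates can be solved as analytic functions of the remaining $n-l+1$. On such a chart the measure $|\gamma_{GL}|$ becomes a nonvanishing analytic density times the Haar measure $|dy|$ in the $n-l+1$ free coordinates $y$, and $f_l$ restricted to $V^{(l-1)}$ becomes an honest $K$-analytic function $g(y)$ on an open compact subset of $K^{n-l+1}$. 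Thus $Z_{\Phi}(\omega,V^{(l-1)},f_{l})$ is a finite sum of integrals of the shape $\int \Theta(y)\,\omega(g(y))\,|dy|$ with $\Theta$ Bruhat–Schwartz; this is precisely an Igusa local zeta function in the free variables, so I have reduced to the known one-function case.

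Next I would invoke Hironaka resolution of singularities for $g$ (available since $K$ has characteristic zero): there is a proper $K$-analytic map $h:Y\to$ (chart) which is an isomorphism outside $g^{-1}(0)$ and such that, in suitable local coordinates near each point of $h^{-1}(g^{-1}(0))$, both $g\circ h$ and the Jacobian of $h$ are monomials (times units). Pulling the integral back through $h$ and using a partition of unity subordinate to a finite cover by such coordinate charts, $Z_{\Phi}(\omega,V^{(l-1)},f_{l})$ becomes a finite $\mathbb{C}$-linear combination of integrals of the model form
\begin{equation*}
\int_{R_K^{m}} (\text{Bruhat--Schwartz})\,\prod_{i} |u_i|_K^{N_i s + \nu_i - 1}\,\chi(\mathrm{ac}\,u_i)^{N_i}\,|du|,
\end{equation*}
with $N_i\ge 1$ the multiplicities of $g\circ h$ along the exceptional/strict-transform divisors and $\nu_i\ge 1$ the corresponding multiplicities of the relative Jacobian. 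Each such model integral is computed by the standard one-variable formula $\int_{R_K}|u|_K^{as}\chi(\mathrm{ac}\,u)|du| = \frac{1-q^{-1}}{1-q^{-1-as}}$ when $\chi$ is trivial on $1+P_K$ with the right conductor, and is a polynomial in $t=\omega(\pi)$ otherwise; either way it is a rational function of $t$ whose poles lie among the zeros of $1-q^{-\nu_i - N_i s}$, i.e. at $\operatorname{Re}(s)=-\nu_i/N_i<0$. Convergence for $\sigma(\omega)>0$ is immediate from this expansion (all exponents $N_i s+\nu_i-1$ have real part $>-1$ there), and in fact the integral converges and is holomorphic on $\Omega_0(K^\times)$, while the combined rational expression gives the meromorphic continuation to all of $\Omega(K^\times)$ with poles at negative rational values of $\operatorname{Re}(s)$, as claimed.

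The main obstacle I anticipate is the first step: showing carefully that the Gel'fand–Leray density, transported to a coordinate chart where $V^{(l-1)}$ is a graph, is a \emph{nonvanishing} $K$-analytic function of the free variables, so that it does not introduce spurious zeros or poles and can be absorbed into the Bruhat–Schwartz amplitude $\Theta$. This follows from the defining relation $\gamma_{GL}\wedge\bigwedge_{i=1}^{l-1}df_i=\bigwedge_{i=1}^{n}dx_i$ together with the rank-$(l-1)$ hypothesis on the Jacobian matrix of $(f_1,\ldots,f_{l-1})$ along $V^{(l-1)}$, which guarantees that the restriction of $\gamma_{GL}$ to $V^{(l-1)}$ is well-defined and nowhere zero (\cite[Chap. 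III, Sect. 1--9]{G-S}); one must also check that $\Theta$ inherits the Bruhat–Schwartz property, which is clear since local constancy and compact support are preserved under the analytic change of coordinates. Once this identification is in place, the remainder is the standard resolution-of-singularities argument for Igusa zeta functions, and the rationality in $t$, the holomorphy on $\Omega_0(K^\times)$, and the location of the poles all follow formally.
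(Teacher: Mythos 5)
Your proposal is correct and follows essentially the same route as the paper: both localize around a point of $V^{(l-1)}$ using a $K$-analytic coordinate change coming from the rank-$(l-1)$ hypothesis, observe that the Gel'fand–Leray density becomes (a locally constant multiple of) Haar measure in the $n-l+1$ free coordinates, and thereby write $Z_{\Phi}(\omega,V^{(l-1)},f_l)$ as a finite sum of ordinary Igusa local zeta functions in $n-l+1$ variables. The paper then simply cites Igusa's Theorem 8.2.1 for holomorphy, rationality in $t$, and negativity of the real parts of the poles, whereas you re-derive that theorem via Hironaka resolution; this is just unpacking the cited result, not a genuinely different argument.
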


\begin{proof}
Given a point $b\in V^{(l-1)}\left(  K\right)  $, there exists an open compact
subset $W\subseteq K^{n}$ containing $b$, and a coordinate system $\phi\left(
x\right)  =\left(  y_{1},\ldots,y_{n}\right)  $, possible after renaming the
$x_{i}$'s, on $W$ such that $y_{i}=f_{i}\left(  x\right)  $, $i=1,\ldots,l-1$,
then
\[
V^{(l-1)}\left(  K\right)  =\left\{  y_{i}=0\text{, }i=1,\ldots,l-1\right\}
\]
locally, and $\wedge_{i=1}^{n}dy_{i}=J(x)\wedge_{i=1}^{n}dx_{i}$ on $W$, here
$J(x)$ is the Jacobian of $\phi\left(  x\right)  $, by schrinking $W$, if
necessary, we may assume that $\left\vert J(x)\right\vert _{K}=\left\vert
J(b)\right\vert _{K}$ for every $x\in W$.$\ $By passing to a sufficiently fine
disjoint covering of the support of $\Phi$, the zeta function $Z_{\Phi}%
(\omega,V^{(l-1)},f_{l})$ becomes a finite sum of Igusa's local zeta
functions,\ more precisely, a finite sum of integrals of type%
\begin{equation}
I\left(  \omega\right)  :=\left\vert J(b)\right\vert _{K}^{-1}%
%TCIMACRO{\dint \limits_{K^{n-l+1}}}%
%BeginExpansion
{\displaystyle\int\limits_{K^{n-l+1}}}
%EndExpansion
\Theta\left(  y\right)  \omega\left(  h\left(  y\right)  \right)  \left\vert
%TCIMACRO{\tbigwedge \limits_{i=l}^{n}}%
%BeginExpansion
{\textstyle\bigwedge\limits_{i=l}^{n}}
%EndExpansion
dy_{i}\right\vert , \label{Igusa}%
\end{equation}
where $\Theta$\ is a Bruhat-Schwartz function, and $h\left(  y\right)
:=\left(  f_{l}\circ\phi^{-1}\right)  \left(  0,\ldots,0,y_{l},\ldots
,y_{n}\right)  $ is a $K$-analytic function defined on an open subset
containing the support of $\Theta$. Now the results announced follow from the
corresponding results about Igusa's zeta function (see \cite[Theorem
8.2.1]{I2}).
\end{proof}

In this paper we will study the following problem in a toric setting:

\begin{problem}
Provide a geometric description of the poles of the meromorphic continuation
of $Z_{\Phi}(\omega,V^{(l-1)},f_{l})$ in terms of an embedded resolution of
singularities of the divisor $D_{K}:=\cup_{i=1}^{l}f_{i}^{-1}\left(  0\right)
$.
\end{problem}

\subsection{The Na\"{\i}ve Definition of $Z_{\Phi}(\omega,V^{(l-1)},f_{l})$}

\subsubsection{\label{DeltaSect}The Dirac Delta function}

Let $\delta$ denote the Dirac delta function:%
\[
\delta\left(  x\right)  =\left\{
\begin{array}
[c]{ccc}%
0 & \text{if} & x\neq0\\
&  & \\
+\infty & \text{if} & x=0,
\end{array}
\right.
\]
for $x\in K^{n}$. We set for $r\in\mathbb{N}$,
\[
\delta_{r}\left(  x\right)  =\left\{
\begin{array}
[c]{ccc}%
0 & \text{if} & x\notin\left(  \pi^{r}R_{K}\right)  ^{n}\\
&  & \\
q^{rn} & \text{if} & x\in\left(  \pi^{r}R_{K}\right)  ^{n}.
\end{array}
\right.
\]
Then $\lim_{r\rightarrow+\infty}\delta_{r}\left(  x\right)  =\delta\left(
x\right)  $, and $\int_{K^{n}}\delta_{r}\left(  x\right)  \left\vert
dx\right\vert =1$, for any $r\in\mathbb{N}$. We recall that if $\theta\in
S(K^{n})$, then
\[
\lim_{r\rightarrow+\infty}\int_{K^{n}}\theta\left(  x\right)  \delta
_{r}\left(  x\right)  \left\vert dx\right\vert =\int_{K^{n}}\theta\left(
x\right)  \delta\left(  x\right)  \left\vert dx\right\vert =\theta\left(
0\right)  ,
\]
i.e., $\lim_{r\rightarrow+\infty}\delta_{r}=\delta$ on $S(K^{n})$.

We now review the definition of `integrals' of type%
\begin{equation}
I_{\Phi}:=\int\limits_{K^{n}}\Phi\left(  x\right)  \delta\left(
f_{1}(x),\ldots,f_{l-1}(x)\right)  \mid dx\mid, \label{int}%
\end{equation}
following Gel'fand and Shilov's book \cite{G-S}, in the case in which $\Phi\in
S(K^{n})$ and $V^{(l-1)}\left(  K\right)  $\ is a submanifold. We may assume
without loss of generality that $\Phi$\ is the product of a constant $c$ by
the characteristic function of a ball $W:=b+\left(  \pi^{e_{0}}R_{K}\right)
^{n}$, with $b\in V^{(l-1)}\left(  K\right)  $. Since $V^{(l-1)}\left(
K\right)  $ is a submanifold, there exists a coordinate change of the form
$y=\left(  y_{1},\ldots,y_{n}\right)  =\phi\left(  x\right)  $, with
\[
y_{i}:=\left\{
\begin{array}
[c]{ccc}%
f_{i}\left(  x\right)  & \text{if} & i=1,\ldots,l-1\\
&  & \\
x_{i}-b_{i} & \text{if} & i=l,\ldots,n,
\end{array}
\right.
\]
such that $\phi:W\rightarrow\pi^{d_{1}}R_{K}\times\ldots\times\pi^{d_{n}}%
R_{K}$ is a $K$-analytic isomorphism, for some $\left(  d_{1},\ldots
,d_{n}\right)  \in\mathbb{N}^{n}$, whose Jacobian $J(x)$ satisfies $\left\vert
J(x)\right\vert _{K}=\left\vert J(b)\right\vert _{K}$, for any $x\in W$. By
using $y=\phi\left(  x\right)  $\ as a change of variables in (\ref{int}) we
define%
\begin{align*}
I_{\Phi}  &  =c\left\vert J(b)\right\vert _{K}^{-1}q^{-\sum_{j=l}^{n}d_{j}%
}\left(  \int\limits_{\pi^{d_{1}}R_{K}\times\ldots\times\pi^{d_{l-1}}R_{K}%
}\delta\left(  y_{1},\ldots,y_{l-1}\right)  \mid dy_{1}\ldots dy_{l-1}%
\mid\right) \\
&  =c\left\vert J(b)\right\vert _{K}^{-1}q^{-\sum_{j=l}^{n}d_{j}}.
\end{align*}
The previous definition is independent of the coordinate system used in the
calculation because
\[
I_{\Phi}=\int\limits_{V^{(l-1)}(K)}\Phi\left(  x\right)  \mid\gamma
_{GL}\left(  x\right)  \mid,
\]
where $\gamma_{GL}$ is a Gel'fand-Leray form on $V^{(l-1)}(K)$. Here we recall
that the restriction of $\gamma_{GL}$\ to $V^{(l-1)}(K)$ is unique, and then
the previous integral is well-defined \cite[Chap. III, Sect. 1-9]{G-S}.

\begin{remark}
\label{osci_int}Let $z\in K$. Since $\Phi\left(  x\right)  \Psi\left(
zf_{l}\left(  x\right)  \right)  $ is a Bruhat-Schwartz function, we can apply
this to obtain the following result for oscillatory integrals:%
\[%
%TCIMACRO{\tint \limits_{K^{n}}}%
%BeginExpansion
{\textstyle\int\limits_{K^{n}}}
%EndExpansion
\Phi\left(  x\right)  \delta\left(  f_{1}(x),\ldots,f_{l-1}(x)\right)
\Psi\left(  zf_{l}\left(  x\right)  \right)  \mid dx\mid=\int
\limits_{V^{(l-1)}(K)}\Phi\left(  x\right)  \Psi\left(  zf_{l}\left(
x\right)  \right)  \mid\gamma_{GL}\left(  x\right)  \mid.
\]

\end{remark}

\subsubsection{The na\"{\i}ve definition}

The na\"{\i}ve local zeta function supported on a submanifold is defined as%
\[
Z_{\Phi}^{\text{na\"{\i}ve}}(\omega,V^{(l-1)},f_{l})=\int\limits_{K^{n}}%
\Phi\left(  x\right)  \delta\left(  f_{1}\left(  x\right)  ,\ldots
,f_{l-1}\left(  x\right)  \right)  \omega\left(  f_{l}(x)\right)  \mid
dx\mid,
\]
for $\omega\in\Omega_{0}\left(  K^{\times}\right)  $.

From the previous discussion about the integrals of type $I_{\Phi}$ and by
using the same reasoning as in the proof of Proposition \ref{proposition0} and
Remark \ref{remark-1} we obtain the following lemma.

\begin{lemma}
\label{lemma1}$Z_{\Phi}^{\text{na\"{\i}ve}}(\omega,V^{(l-1)},f_{l})=Z_{\Phi
}(\omega,V^{(l-1)},f_{l})$, for $\omega\in\Omega_{0}\left(  K^{\times}\right)
$.
\end{lemma}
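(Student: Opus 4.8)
The plan is to reduce the global identity to a purely local computation, exactly mirroring the structure already used in the proof of Proposition \ref{proposition0} together with the explicit evaluation of $I_\Phi$ carried out in Section \ref{DeltaSect}. First I would invoke linearity of both sides in $\Phi$ and the fact that $S(K^n)$ is spanned by characteristic functions of small balls (Remark \ref{remark-1}): by passing to a sufficiently fine disjoint covering of the support of $\Phi$, it suffices to prove the equality when $\Phi = c\,\mathbf{1}_{W}$ for a ball $W = b + (\pi^{e_0}R_K)^n$ with $b \in V^{(l-1)}(K)$ (balls not meeting $V^{(l-1)}(K)$ contribute zero to both sides, since the delta functional is supported on $V^{(l-1)}$ and the Gel'fand--Leray integral is taken over $V^{(l-1)}(K)$). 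Shrinking $W$ if necessary, we may also assume $|J(x)|_K = |J(b)|_K$ on $W$ and that $\omega(f_l(x))$ is handled by the local chart.

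Next I would apply the coordinate change $y = \phi(x)$ with $y_i = f_i(x)$ for $i = 1,\dots,l-1$ and $y_i = x_i - b_i$ for $i = l,\dots,n$, which is a $K$-analytic isomorphism onto a polydisc $\prod_{i=1}^n \pi^{d_i}R_K$ with constant Jacobian modulus $|J(b)|_K$, just as in Section \ref{DeltaSect}. Under this change the na\"{\i}ve integral becomes
\[
Z_\Phi^{\text{na\"{\i}ve}}(\omega,V^{(l-1)},f_l) = c\,|J(b)|_K^{-1}\left(\int\limits_{\prod_{i=1}^{l-1}\pi^{d_i}R_K}\delta(y_1,\dots,y_{l-1})\,|dy_1\cdots dy_{l-1}|\right)\left(\int\limits_{\prod_{i=l}^{n}\pi^{d_i}R_K}\omega(h(y))\,|dy_l\cdots dy_n|\right),
\]
where $h(y) = (f_l\circ\phi^{-1})(0,\dots,0,y_l,\dots,y_n)$. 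The inner delta-integral over the first $l-1$ coordinates evaluates to $1$ (this is precisely the defining property of the delta functional recalled in Section \ref{DeltaSect}), leaving $c\,|J(b)|_K^{-1}\int \omega(h(y))\,|\bigwedge_{i=l}^n dy_i|$. On the other hand, by the same chart the Gel'fand--Leray form on $V^{(l-1)}(K)$ in these coordinates is $|J(b)|_K^{-1}|\bigwedge_{i=l}^n dy_i|$ (since $\gamma_{GL}\wedge\bigwedge_{i=1}^{l-1}df_i = \bigwedge_{i=1}^n dx_i$ and the $df_i = dy_i$), so $Z_\Phi(\omega,V^{(l-1)},f_l)$ restricted to $W$ equals the very same integral $I(\omega)$ of type \eqref{Igusa}. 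Comparing the two expressions gives the claimed equality on $W$, and summing over the covering gives it for general $\Phi$.

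The main obstacle — really the only substantive point — is justifying the manipulation of the delta functional under the non-linear change of variables: one must check that the formal computation in Section \ref{DeltaSect}, which was done for a single ball, is legitimate and that the factorization of the polydisc integral into the product of the $\delta$-integral over $(y_1,\dots,y_{l-1})$ and the remaining integral is valid for $\omega \in \Omega_0(K^\times)$. This is exactly where the hypothesis $\operatorname{Re}(s) = \sigma(\omega) > 0$ enters: it guarantees absolute convergence of $\int \omega(h(y))\,|\bigwedge_{i=l}^n dy_i|$ near the zero locus of $h$, so that Fubini applies and both sides are genuine (not merely formal) integrals. Everything else is bookkeeping: the independence of the construction from the chart was already settled in Section \ref{DeltaSect} via the uniqueness of the restriction of $\gamma_{GL}$ to $V^{(l-1)}(K)$, and holomorphy on $\Omega_0(K^\times)$ of both sides follows from Proposition \ref{proposition0}, so the identity, once established on $\Omega_0(K^\times)$, is all that is asserted.
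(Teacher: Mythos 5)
Your proposal is correct and takes essentially the same approach as the paper, which proves the lemma by the same reduction to small balls around points of $V^{(l-1)}(K)$, the same coordinate change $y_i = f_i(x)$ (for $i<l$) from Section \ref{DeltaSect} and Proposition \ref{proposition0}, and the same comparison of the resulting local integrals. Your explicit remark that $\sigma(\omega)>0$ is what makes $\int\omega(h(y))\,|\bigwedge_{i=l}^{n}dy_i|$ a genuine convergent integral — so that the Gel'fand–Shilov delta functional applies even though $\Phi\cdot\omega(f_l)$ is not itself Bruhat–Schwartz — is exactly the point the paper's terse proof leaves implicit.
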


Let $g_{i}:U\rightarrow K$, $g_{i}\left(  0\right)  =0$, $i=1,\ldots,l$,
$l\geq2$, be an analytic function on an open subset $U$. We set $W:=\left\{
x\in U\mid g_{i}\left(  x\right)  =0,i=1,\ldots,l-1\right\}  $. Another zeta
function is defined as follows:%

\begin{align*}
\mathcal{Z}_{\Phi}\left(  \omega,W,g_{l}\right)   &  :=\lim_{r\rightarrow
+\infty}%
%TCIMACRO{\dint \limits_{K^{n}}}%
%BeginExpansion
{\displaystyle\int\limits_{K^{n}}}
%EndExpansion
\Phi\left(  x\right)  \delta_{r}\left(  g_{1}(x),\ldots,g_{l-1}(x)\right)
\omega\left(  g_{l}\left(  x\right)  \right)  \mid dx\mid\\
&  =\lim_{r\rightarrow+\infty}%
%TCIMACRO{\dint \limits_{S_{r}}}%
%BeginExpansion
{\displaystyle\int\limits_{S_{r}}}
%EndExpansion
\Phi\left(  x\right)  q^{r\left(  l-1\right)  }\omega\left(  g_{l}\left(
x\right)  \right)  \mid dx\mid,
\end{align*}
where $S_{r}:=\left\{  x\in K^{n}\mid ord(g_{i}(x))\geqslant r\text{,
}i=1,\ldots,l-1\right\}  $. In Section \ref{section6} we will study these
integrals in a toric setting, in particular we will show the existence of the
limit. The $\mathcal{Z}_{\Phi}\left(  s,W,g_{l}\right)  $ are limits of the
integrals considered by Denef in \cite{D1}. We emphasize that $Z_{\Phi
}^{\text{na\"{\i}ve}}(\omega,W,g_{l})$ is not necessary equal to
$\mathcal{Z}_{\Phi}\left(  s,W,g_{l}\right)  $ because $\Phi\left(  x\right)
\omega\left(  g_{l}\left(  x\right)  \right)  $ is not a Bruhat-Schwartz function.

\section{Polyhedral Subdivisions of\textit{\ }$\mathbb{R}_{+}^{n}$ and
Resolution of Singularities}

\subsection{Newton polyhedra}

We set $\mathbb{R}_{+}:=\{x\in\mathbb{R}\mid x\geqslant0\}$. Let $G$ be a
non-empty subset of $\mathbb{N}^{n}$. The \textit{Newton polyhedron }%
$\Gamma=\Gamma\left(  G\right)  $ associated to $G$ is the convex hull in
$\mathbb{R}_{+}^{n}$ \ of the set $\cup_{m\in G}\left(  m+\mathbb{R}_{+}%
^{n}\right)  $. For instance classically one associates a \textit{Newton
polyhedron }$\Gamma\left(  g\right)  $\textit{\ (at the origin) to }
$g(x)=\sum_{m}c_{m}x^{m}$ ($x=\left(  x_{1},\ldots,x_{n}\right)  $, $g(0)=0$),
\ being a non-constant polynomial function over $K$ or $K-$analytic function
\ in a neighborhood of the origin, where $G=$supp$(g)$ $:=$ $\left\{
m\in\mathbb{N}^{n}\mid c_{m}\neq0\right\}  $.\ Further we will associate more
generally a Newton polyhedron to an analytic mapping.

We fix a Newton polyhedron $\Gamma$\ as above. We first collect some notions
and results about Newton polyhedra that \ will be used in the next sections.
Let $\left\langle \cdot,\cdot\right\rangle $ denote the usual inner product of
$\mathbb{R}^{n}$, and identify \ the dual space of $\mathbb{R}^{n}$ with
$\mathbb{R}^{n}$ itself by means of it.

For $a\in\mathbb{R}_{+}^{n}$, we define
\[
d(a,\Gamma)=\min_{x\in\Gamma}\left\langle a,x\right\rangle ,
\]
and \textit{the first meet locus }$F(a,\Gamma)$ of $a$ as \
\[
F(a,\Gamma):=\{x\in\Gamma\mid\left\langle a,x\right\rangle =d(a,\Gamma)\}.
\]
The first meet locus \ is a face of $\Gamma$. Moreover, if $a\neq0$,
$F(a,\Gamma)$ is a proper face of $\Gamma$.

If $\Gamma=\Gamma\left(  g\right)  $, we define the \textit{face function
}$g_{a}\left(  x\right)  $\textit{\ of }$g(x)$\textit{\ with respect to }$a$
as
\[
g_{a}\left(  x\right)  =g_{F(a,\Gamma)}\left(  x\right)  =\sum_{m\in
F(a,\Gamma)}c_{m}x^{m}.
\]

In the case of functions having subindices, say $g_{i}(x)$, we will use the
notation $g_{i,a}(x)$ for the face function of $g_{i}(x)$\ with respect to $a
$.

We will say that $a=\left(  a_{1},\ldots,a_{n}\right)  \in\mathbb{R}^{n}$ is
\textit{a positive vector} (respectively \textit{strictly positive vector}),
if $a_{i}\geq0$, for $i=1,\ldots,n$, (respectively if $a_{i}>0$, for
$i=1,\ldots,n$).\ We use the notation $a\succ0$ to mean that $a$ is strictly positive.

\subsection{Polyhedral Subdivisions Subordinate to a Polyhedron}

We define an equivalence relation in \ $\mathbb{R}_{+}^{n}$ by taking $a\sim
a^{\prime}\Leftrightarrow F(a,\Gamma)=F(a^{\prime},\Gamma)$. The equivalence
classes of \ $\sim$ are sets of the form
\[
\Delta_{\tau}=\{a\in\mathbb{R}_{+}^{n}\mid F(a,\Gamma)=\tau\},
\]
where $\tau$ \ is a face of $\Gamma$.

We recall that the cone strictly\ spanned \ by the vectors $a_{1},\ldots
,a_{r}\in\mathbb{R}_{+}^{n}\setminus\left\{  0\right\}  $ is the set
$\Delta=\left\{  \lambda_{1}a_{1}+...+\lambda_{r}a_{r}\mid\lambda_{i}%
\in\mathbb{R}_{+}\text{, }\lambda_{i}>0\right\}  $. If $a_{1},\ldots,a_{r}$
are linearly independent over $\mathbb{R}$, $\Delta$ \ is called \ a
\textit{simplicial cone}. \ If \ $a_{1},\ldots,a_{r}\in\mathbb{Z}^{n}$, we say
$\Delta$\ is \ a \textit{rational cone}. If $\left\{  a_{1},\ldots
,a_{r}\right\}  $ is a subset of a basis \ of the $\mathbb{Z}$-module
$\mathbb{Z}^{n}$, we call $\Delta$ a \textit{simple cone}.

A precise description of the geometry of the equivalence classes modulo $\sim$
is as follows. Each \textit{facet} (i.e. a face of codimension one)\ $\gamma$
of $\Gamma$\ has a unique vector $a(\gamma)=(a_{\gamma,1},\ldots,a_{\gamma
,n})\in\mathbb{N}^{n}\mathbb{\setminus}\left\{  0\right\}  $, \ whose nonzero
coordinates are relatively prime, which is perpendicular to $\gamma$. We
denote by $\mathfrak{D}(\Gamma)$ the set of such vectors. The equivalence
classes are rational cones of the form
\[
\Delta_{\tau}=\{\sum\limits_{i=1}^{r}\lambda_{i}a(\gamma_{i})\mid\lambda
_{i}\in\mathbb{R}_{+}\text{, }\lambda_{i}>0\},
\]
where $\tau$ runs through the set of faces of $\Gamma$, and $\gamma_{i}$,
$i=1,\ldots,r$\ are the facets containing $\tau$. We note that $\Delta_{\tau
}=\{0\}$ if and only if $\tau=\Gamma$. The family $\left\{  \Delta_{\tau
}\right\}  _{\tau}$, with $\tau$ running over\ the proper faces of $\Gamma$,
is a partition of $\mathbb{R}_{+}^{n}\backslash\{0\}$; we call this partition
a \textit{\ polyhedral subdivision of }\ $\mathbb{R}_{+}^{n}$%
\ \textit{subordinate} to $\Gamma$. We call $\left\{  \overline{\Delta}_{\tau
}\right\}  _{\tau}$, the family formed by the topological closures of the
$\Delta_{\tau}$, a \textit{\ fan} \textit{subordinate} to $\Gamma$.

Each cone $\Delta_{\tau}$\ can be partitioned \ into a finite number of
simplicial cones $\Delta_{\tau,i}$. In addition, the subdivision can be chosen
such that each $\Delta_{\tau,i}$ is spanned by part of $\mathfrak{D}(\Gamma)$.
Thus from the above considerations we have the following partition of
$\mathbb{R}_{+}^{n}\backslash\{0\}$:%
\[
\mathbb{R}_{+}^{n}\backslash\{0\}=\bigcup\limits_{\tau\text{ }}\left(
\bigcup\limits_{i=1}^{l_{\tau}}\Delta_{\tau,i}\right)  ,
\]
where $\tau$ runs \ over the proper faces of $\Gamma$, and each $\Delta
_{\tau,i}$ \ is a simplicial cone contained in $\Delta_{\tau}$.\ We will say
that $\left\{  \Delta_{\tau,i}\right\}  $ is a \textit{simplicial polyhedral
subdivision of }\ $\mathbb{R}_{+}^{n}$\ \textit{subordinate} to $\Gamma$, and
that $\left\{  \overline{\Delta}_{\tau,i}\right\}  $ is a \textit{simplicial
fan} \textit{subordinate} to $\Gamma$.

By adding new rays , each simplicial cone can be partitioned further into a
finite number of simple cones. In this way we obtain a \textit{simple
polyhedral subdivision} of $\mathbb{R}_{+}^{n}$\ \textit{subordinate} to
$\Gamma$, and a \textit{simple fan} \textit{subordinate} to $\Gamma$ (or a
\textit{complete regular fan}) (see e.g. \cite{K-M-S}).

Given a rational polyhedral subdivision $\Sigma^{\ast}$ of $\mathbb{R}_{+}%
^{n}$, we \ denote by Vert$\left(  \Sigma^{\ast}\right)  $ the set of all the
generators of the cones in $\Sigma^{\ast}$(this set is also called \textit{the
skeleton of} $\Sigma^{\ast}$). Each $n$-dimensional simple cone in
$\Sigma^{\ast}$ generated, say, by $a_{i}=\left(  a_{i,1},\ldots
,a_{i,n}\right)  \in$ Vert$\left(  \Sigma^{\ast}\right)  $, $i=1,\ldots,n$,
can be identified with a unimodular matrix $\left[  a_{1},\ldots,a_{n}\right]
$.

\subsection{The Newton polyhedron associated to an analytic mapping}

Let $\boldsymbol{f}=(f_{1},\ldots,f_{l})$, $\boldsymbol{f}\left(  0\right)  =0
$, be a non-constant polynomial mapping, or more generally, an analytic
mapping defined \ on a neighborhood $U\subseteq K^{n}$ of the origin. In this
paper we associate to $\boldsymbol{f}$ a Newton polyhedron $\Gamma\left(
\boldsymbol{f}\right)  :=\Gamma\left(
%TCIMACRO{\tprod \nolimits_{i=1}^{l}}%
%BeginExpansion
{\textstyle\prod\nolimits_{i=1}^{l}}
%EndExpansion
f_{i}\left(  x\right)  \right)  $. From a geometrical point of view,
$\Gamma\left(  \boldsymbol{f}\right)  $\ is the Minkowski sum of the
$\Gamma\left(  f_{i}\right)  $, for $i=1,\cdots,l$, (see e.g. \cite{O2},
\cite{St}). By using the results previously presented, we can associate to
$\Gamma\left(  \boldsymbol{f}\right)  $ a simple (or simplicial) polyhedral
subdivision $\Sigma^{\ast}\left(  \boldsymbol{f}\right)  $ of $\mathbb{R}%
_{+}^{n}$ subordinate to $\Gamma\left(  \boldsymbol{f}\right)  $. The
polyhedron $\Gamma\left(  \boldsymbol{f}\right)  $ is useful to construct
$\Sigma^{\ast}\left(  \boldsymbol{f}\right)  $, and then the corresponding
toric manifold. However, if $l>1$, the description of the real parts of the
poles of the local zeta functions defined in the introduction requires
$\Gamma\left(  \boldsymbol{f}\right)  $, and $\Gamma\left(  f_{j}\right)  $,
$j=1,\ldots,l$, as we will see later on.

\begin{remark}
\label{remark0}A basic fact\ about the Minkowski sum \ operation is the
additivity of the faces. From this fact follows:

\noindent(1) $F\left(  a,\Gamma\left(  \boldsymbol{f}\right)  \right)  =%
%TCIMACRO{\tsum \nolimits_{j=1}^{l}}%
%BeginExpansion
{\textstyle\sum\nolimits_{j=1}^{l}}
%EndExpansion
F\left(  a,\Gamma\left(  f_{j}\right)  \right)  $, for $a\in\mathbb{R}_{+}%
^{n}$\ ;

\noindent(2) $d\left(  a,\Gamma\left(  \boldsymbol{f}\right)  \right)  =%
%TCIMACRO{\tsum \nolimits_{j=1}^{l}}%
%BeginExpansion
{\textstyle\sum\nolimits_{j=1}^{l}}
%EndExpansion
d\left(  a,\Gamma\left(  f_{j}\right)  \right)  $, for $a\in\mathbb{R}_{+}%
^{n}$\ ;

\noindent(3) let $\tau\ $be a proper face of $\Gamma\left(  \boldsymbol{f}%
\right)  $, and let $\tau_{j}$ be proper face of $\Gamma\left(  f_{j}\right)
$, for $i=1,\cdots,l$. If $\tau=%
%TCIMACRO{\tsum \nolimits_{j=1}^{l}}%
%BeginExpansion
{\textstyle\sum\nolimits_{j=1}^{l}}
%EndExpansion
\tau_{j}$, then $\Delta_{\tau}\subseteq\overline{\Delta}_{\tau_{j}}$, for
$i=1,\cdots,l$.
\end{remark}

\begin{remark}
Note that the equivalence relation,
\[
a\sim a^{\prime}\Leftrightarrow F(a,\Gamma\left(  \boldsymbol{f}\right)
)=F(a^{\prime},\Gamma\left(  \boldsymbol{f}\right)  )\text{,}%
\]
used in the construction of a polyhedral subdivision of $\mathbb{R}_{+}^{n}%
$\ subordinate to\ $\Gamma\left(  \boldsymbol{f}\right)  $ can be equivalently
defined in the following form:%
\[
a\sim a^{\prime}\Leftrightarrow F(a,\Gamma\left(  f_{j}\right)  )=F(a^{\prime
},\Gamma\left(  f_{j}\right)  )\text{, for each }j=1,\ldots,l.
\]
This last definition is used in Oka's book \cite{O2}.
\end{remark}

\subsection{Non-degeneracy Conditions}

\begin{definition}
(1) Let $\boldsymbol{f}=(f_{1},\ldots,f_{l})$, $\boldsymbol{f}\left(
0\right)  =0$, be an analytic mapping defined \ on a neighborhood $U\subseteq
K^{n}$ of the origin. Let $\Gamma_{i}:=\Gamma\left(  f_{i}\right)  $ be the
Newton polyhedron of $f_{i}$ at the origin, for $i=1,\ldots,l$. The mapping
$\boldsymbol{f}$ is called \textit{non-degenerate with respect to} $\left(
\Gamma_{1},\ldots,\Gamma_{l}\right)  $ at the origin (or simply
\textit{non-degenerate}), if for every strictly positive vector $a\in
\mathbb{R}^{n}$ and any
\[
z\in\left\{  z\in\left(  K^{\times}\right)  ^{n}\cap U\mid f_{1,a}%
(z)=\ldots=f_{l,a}(z)=0\right\}  \text{,}%
\]
it satisfies that $rank_{K}\left[  \frac{\partial f_{i,_{a}}}{\partial x_{j}%
}\left(  z\right)  \right]  =\min\{l,n\}$.

\noindent(2) Let $V^{(j)}\left(  K\right)  =V^{(j)}=\left\{  z\in U\mid
f_{1}(z)=\ldots=f_{j}(z)=0\right\}  $, for $j=1,\ldots,l$, as before. If the
analytic mapping
\[%
\begin{array}
[c]{ccc}%
U & \rightarrow & K^{j}\\
x & \rightarrow & (f_{1}\left(  x\right)  ,\ldots,f_{j}\left(  x\right)  )
\end{array}
\]
is \textit{non-degenerate, we will say that }$V^{(j)}$ is \textit{a germ of
non-degenerate variety}.

\noindent(3) Let $\boldsymbol{f}=(f_{1},\ldots,f_{l})$, $\boldsymbol{f}\left(
0\right)  =0$, be a non-constant polynomial mapping. Let $\Gamma_{i}%
=\Gamma\left(  f_{i}\right)  $ be the Newton polyhedron of $f_{i}$ at the
origin, for $i=1,\ldots,l$. The mapping $\boldsymbol{f}$ is called
\textit{strongly non-degenerate with respect to} $\left(  \Gamma_{1}%
,\ldots,\Gamma_{l}\right)  $ (or simply \textit{strongly non-degenerate}), if
for every positive vector $a\in\mathbb{R}^{n}$, including the origin, and any
$z\in\left\{  z\in\left(  K^{\times}\right)  ^{n}\mid f_{1,a}(z)=\ldots
=f_{l,a}(z)=0\right\}  $, it satisfies that $rank_{K}\left[  \frac{\partial
f_{i,_{a}}}{\partial x_{j}}\left(  z\right)  \right]  =\min\{l,n\}$.

\noindent(4) Let $V^{(j)}\left(  K\right)  =V^{\left(  j\right)  }=\left\{
z\in K^{n}\mid f_{1}(z)=\ldots=f_{j}(z)=0\right\}  $, for $j=1,\ldots,l$. If
the mapping $x\rightarrow(f_{1}\left(  x\right)  ,\ldots,f_{j}\left(
x\right)  )$ is \textit{\ strongly non-degenerate, we will say that
}$V^{\left(  j\right)  }$ is \textit{a non-degenerate complete intersection
variety with respect to the coordinate system }$x=\left(  x_{1},\ldots
,x_{n}\right)  $.
\end{definition}

The above notion was introduced by Khovansky \cite{K}, see also \cite{O2}. For
a discussion about the relation between Khovansky's non-degeneracy notion and
other similar notions we refer the reader to \cite{V-Z}.

\begin{remark}
\label{Remark1}We want to keep the classical terminology used in singularity
theory. However, we will use this terminology in a more general setting. We
will use the following conventions. By a $K$-algebraic variety $W$, always
affine in this paper, we mean \ the set of $K$-rational points of $W$ with the
structure of $K$-analytic set. By a smooth $K$-algebraic variety $V$, we mean
the set of $K$-rational points of $V$ with the structure of $K$-analytic
closed submanifold. In particular the dimension of $V$ is the dimension of the
underlying $K$-analytic submanifold.
\end{remark}

\begin{definition}
An analytic function $f_{i}(x)$ is called convenient if for any $k=1,\ldots
,n$, there exists a monomial $x_{k}^{m_{k}}$ with non-zero coefficient in the
Taylor expansion of $f_{i}(x)$. An analytic mapping
\[%
\begin{array}
[c]{cccc}%
(f_{1},\ldots,f_{j}): & U & \rightarrow & K^{j\text{ }}\\
& x & \rightarrow & (f_{1}\left(  x\right)  ,\ldots,f_{j}\left(  x\right)  )
\end{array}
\]
is \ called \textit{\ convenient}, if each $f_{i}(x)$ is convenient for
$i=1,\ldots,j$. In this case we will say that \ $V^{\left(  j\right)  }$ is
convenient variety.
\end{definition}

We set $E_{i}:=\left(  0,\ldots,\underbrace{%
\begin{array}
[c]{c}%
1\\
i-\text{th place}%
\end{array}
},\ldots,0\right)  $, for $i=1,\ldots,n$. From now on, we will assume \ that
$\boldsymbol{f}=(f_{1},\ldots,f_{l})$ is convenient, thus there exists a
simple polyhedral subdivision of $\mathbb{R}_{+}^{n}$\ subordinate to
$\Gamma\left(  \boldsymbol{f}\right)  $ such that any \ not strictly positive
vector in Vert$\left(  \Sigma^{\ast}\right)  $ belongs to $\left\{
E_{1},\ldots,E_{n}\right\}  $.

\subsection{Resolution of Singularities of Non-degenerate Complete
Intersection Varieties}

The resolution of singularities for non-degenerate complete intersection
varieties is a well-known fact (see e.g. \cite{K}, \cite{Mo}, \cite{O2},
\cite{Var}, see also \cite{V-Z}, and references therein). The version of the
resolution of singularities needed here is a simple variation of the one given
in \cite[Theorem 3.4]{O2}, \cite[Proposition 2.5]{Mo}. For the material needed
to adapt the proof given in Oka%
%TCIMACRO{\U{b4}}%
%BeginExpansion
\'{}%
%EndExpansion
s book to the $p$-adic setting we refer the reader to \cite{I2}, \cite{Ser}.
We now review the basic facts about resolution of singularities of
non-degenerate complete intersection varieties, without proofs, following
Oka's book. We warn the reader that our terminology and notation are slightly
different to the corresponding in \cite{O2}.

Let $V^{(j)}(K)$, $j=l-1,l$, be germs of convenient and non-degenerate
complete intersection varieties. Fix a rational simple polyhedral subdivision
$\Sigma^{\ast}=\Sigma^{\ast}\left(  \boldsymbol{f}\right)  $ \ of
$\mathbb{R}_{+}^{n}$ subordinate to $\Gamma\left(  \boldsymbol{f}\right)  $.
Let $X(K)$ be the toric manifold corresponding to $\Sigma^{\ast}$, and let
$h:X(K)\rightarrow U$ be the corresponding toric modification. We set $\left(
V^{\ast}\right)  ^{(j)}(K):=V^{(j)}(K)\cap\left(  K^{\times}\right)  ^{n}$,
$j=l-1,l$, and $V_{U}^{(j)}(K)$ for the closure of $\left(  V^{\ast}\right)
^{(j)}(K)$ in $U$, $j=l-1,l$. Let $\widetilde{V}^{(j)}(K)$\ be the strict
transform of $\left(  V^{\ast}\right)  ^{(j)}(K)$ in $X(K)$, for $j=l-1,l$. We
set $H^{(j)}(K):=\left\{  z\in U\mid f_{j}\left(  z\right)  =0\right\}  $,
$\left(  H^{\ast}\right)  ^{(j)}(K):=H^{(j)}(K)\cap\left(  K^{\times}\right)
^{n}$, and $H_{U}^{(j)}(K)$ for the closure of $\left(  H^{\ast}\right)
^{(j)}(K)$ in $U$, for $j=1,\ldots,l$. Let $\widetilde{H}^{(j)}(K)$ be the
strict transform of $\left(  H^{\ast}\right)  ^{(j)}(K)$ in $X(K)$, for
$j=1,\ldots,l$.

We recall that $h:X(K)\smallsetminus h^{-1}\left(  0\right)  \rightarrow
U\smallsetminus\left\{  0\right\}  $ is a $K$-analytic isomorphism, and that
\[
h^{-1}\left(  0\right)  =%
%TCIMACRO{\dbigcup \limits_{\substack{a\in\text{Vert}(\Sigma^{\ast})\\a\succ
%0}}}%
%BeginExpansion
{\displaystyle\bigcup\limits_{\substack{a\in\text{Vert}(\Sigma^{\ast}%
)\\a\succ0}}}
%EndExpansion
E_{a}(K),
\]
where $E_{a}(K)$ is the compact exceptional divisor corresponding to $a$, see
\cite[Corollary 1.4.1]{O2}.

\begin{theorem}
[{\cite[Theorem 3.4]{O2}}]\label{proposition1}Assume that $V^{\left(
l\right)  }\left(  K\right)  $, $V^{\left(  l-1\right)  }\left(  K\right)
$\ are germs of convenient and non-degenerate complete intersection varieties
at the origin. Fix a rational simple polyhedral subdivision $\Sigma^{\ast
}=\Sigma^{\ast}\left(  \boldsymbol{f}\right)  $ of $\mathbb{R}_{+}^{n}$
subordinate to\ $\Gamma\left(  \boldsymbol{f}\right)  $. Let $X\left(
K\right)  $ be the toric manifold corresponding to $\Sigma^{\ast}$, and let
$h:X\left(  K\right)  \rightarrow U$ be the corresponding toric modification.
There exist a neighborhood \ $U_{0}\subseteq U$ of the origin such that the
following assertions are true over $U_{0}$. Put $X^{\prime}%
(K):=X(K)\smallsetminus h^{-1}\left(  0\right)  $ and $Y^{\prime
}(K):=U\smallsetminus\left\{  0\right\}  $.

\noindent(1) $h:\widetilde{V}^{(j)}(K)\rightarrow V_{U}^{(j)}(K)$ is a proper
mapping such that $h:\widetilde{V}^{(j)}(K)\cap X^{\prime}(K)\rightarrow
V_{U}^{(j)}(K)\cap Y^{\prime}(K)$ is a $K$-analytic isomorphism, for $j=l-1,l$.

\noindent(2) The divisor of the pullback function $h^{\ast}f_{j}$ is given by
\[
div\left(  h^{\ast}f_{j}\right)  (K)=\widetilde{H}^{(j)}(K)+\sum
_{\substack{a\in Vert\left(  \Sigma^{\ast}\right)  \\a\succ0}}d\left(
a,\Gamma_{j}\right)  E_{a}(K).
\]

\noindent(3) $\widetilde{V}^{(j)}(K)$ is a submanifold of $X(K)$ of dimension
$n-j$ so that $\widetilde{V}^{(j)}(K)=\cap_{i=1}^{j}\widetilde{H}^{(i)}(K)$,
$j=l-1,l$. In addition $\widetilde{V}^{(j)}(K)$, $j=l-1,l$, \ intersect the
exceptional divisor of $h$ transversely.

\noindent(4) $h:\widetilde{V}^{(l-1)}(K)\smallsetminus h^{-1}\left(  0\right)
\rightarrow V^{(l-1)}(K)\smallsetminus\left\{  0\right\}  $ is a $K$-analytic isomorphism.
\end{theorem}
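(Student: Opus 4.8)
The plan is to reduce everything to an explicit computation on the affine charts of the toric manifold $X(K)$ and then to transport Oka's argument to the $K$-analytic category. Fix an $n$-dimensional simple cone $\sigma\in\Sigma^{\ast}$ spanned by $a_{1},\dots,a_{n}\in\operatorname{Vert}(\Sigma^{\ast})$, which by simplicity form a $\mathbb{Z}$-basis of $\mathbb{Z}^{n}$; the corresponding chart is a copy of $K^{n}$ with coordinates $y=(y_{1},\dots,y_{n})$ on which $h$ is the monomial map $x_{k}=\prod_{j=1}^{n}y_{j}^{a_{j,k}}$. A direct computation from the definition of $d(a,\Gamma_{i})$ gives the factorization $h^{\ast}f_{i}(y)=\bigl(\prod_{j=1}^{n}y_{j}^{d(a_{j},\Gamma_{i})}\bigr)\,\widetilde{f}_{i}(y)$, where $\widetilde{f}_{i}$ is a $K$-analytic function (the strict transform function) whose restriction to the locus $\{y_{j}=0:\ a_{j}\succ0\}$ equals, up to multiplication by a monomial unit in the remaining coordinates, the face function $f_{i,a}$ for $a$ any vector in the relative interior of the face of $\sigma$ spanned by the strictly positive $a_{j}$'s. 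Defining $\widetilde{H}^{(i)}(K)$ chart by chart as $\{\widetilde{f}_{i}=0\}$, and using that the not strictly positive generators lie in $\{E_{1},\dots,E_{n}\}$ by convenience, these local descriptions glue; part (2) is then immediate, since $E_{a}(K)$ is the coordinate hyperplane $\{y_{j}=0\}$ attached to $a\succ0$ and it occurs in $\operatorname{div}(h^{\ast}f_{i})$ with multiplicity $d(a,\Gamma_{i})$, the remaining contribution being the strict transform $\widetilde{H}^{(i)}(K)$.

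The heart of the argument is to upgrade the combinatorial non-degeneracy of $\boldsymbol{f}$ into smoothness and transversality of the strict transforms. On a chart as above, let $J\subseteq\{1,\dots,n\}$ index a subset of the strictly positive generators, consider the torus orbit $O_{J}:=\{y:\ y_{j}=0\ \text{for}\ j\in J,\ y_{j}\neq0\ \text{for}\ j\notin J\}$ and its closure $\overline{O}_{J}$ (an intersection of exceptional divisors with, possibly, strict transforms of coordinate hyperplanes). Using the identification of $\widetilde{f}_{i}|_{\overline{O}_{J}}$ with a unit times $f_{i,a(J)}$ for $a(J)$ in the relative interior of the corresponding cone, the non-degeneracy of $(f_{1},\dots,f_{j})$ with respect to $(\Gamma_{1},\dots,\Gamma_{j})$ forces $\operatorname{rank}_{K}\bigl[\partial\widetilde{f}_{i}/\partial y_{k}\bigr]_{1\le i\le j}=\min\{j,\dim\overline{O}_{J}\}$ at every common zero of $\widetilde{f}_{1},\dots,\widetilde{f}_{j}$ on $\overline{O}_{J}$; since $j\in\{l-1,l\}$ and $l\le n$, this rank equals $j$. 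By the $K$-analytic implicit function theorem (see \cite{I1}, \cite{Ser}) one obtains: (i) $\bigcap_{i=1}^{j}\widetilde{H}^{(i)}(K)$ is a closed $K$-analytic submanifold of $X(K)$ of codimension $j$; (ii) it meets $\overline{O}_{J}$, hence every exceptional divisor, transversely; and (iii) on $X^{\prime}(K)$ it coincides with the closure of $h^{-1}\bigl((V^{\ast})^{(j)}(K)\bigr)$. Since the strict transform is by definition this closure, $\widetilde{V}^{(j)}(K)=\bigcap_{i=1}^{j}\widetilde{H}^{(i)}(K)$ is a submanifold of dimension $n-j$ intersecting the exceptional divisor transversely, which is part (3), and in particular gives the submanifold part of (1).

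It remains to assemble the global statements. Shrinking $U$ to a compact neighborhood $U_{0}$ of the origin on which all the above local descriptions hold, the toric modification $h:X(K)\to U_{0}$ is proper; since $\widetilde{V}^{(j)}(K)$ and $V_{U}^{(j)}(K)$ are closed, the restriction $h:\widetilde{V}^{(j)}(K)\to V_{U}^{(j)}(K)$ is proper as well, which is the first half of (1). For the second half, recall from \cite[Corollary 1.4.1]{O2} that $h:X(K)\smallsetminus h^{-1}(0)\to U\smallsetminus\{0\}$ is already a $K$-analytic isomorphism; restricting it to $\widetilde{V}^{(j)}(K)\cap X^{\prime}(K)$ and using (iii) to identify the image with $V_{U}^{(j)}(K)\cap Y^{\prime}(K)$ gives the rest of (1); taking $j=l-1$ and invoking convenience to see that $V_{U}^{(l-1)}(K)$ agrees with $V^{(l-1)}(K)$ in a punctured neighborhood of the origin yields (4). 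The steps requiring the most care are in the second paragraph: extracting transversality and the equality $\widetilde{V}^{(j)}(K)=\bigcap_{i}\widetilde{H}^{(i)}(K)$ from the non-degeneracy condition \emph{uniformly over all torus orbits} of $X(K)$, and checking that convenience prevents any component of $V^{(j)}$ from lying inside a coordinate hyperplane, so that strict and total transforms agree away from $h^{-1}(0)$.
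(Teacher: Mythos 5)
The paper does not prove this theorem: it is quoted from Oka's book (\cite[Theorem 3.4]{O2}), with the $p$-adic adaptation deferred to \cite{I2} and \cite{Ser}, and your sketch is a faithful outline of exactly that standard toric argument (chart-by-chart monomial factorization of $h^{\ast}f_{i}$, identification of the strict transforms restricted to torus orbits with face functions, non-degeneracy yielding the rank condition and the implicit function theorem yielding smoothness and transversality, then the shrinking/properness bookkeeping). So there is nothing in the paper to contrast it with; your reconstruction matches the proof the paper points to, and is consistent with the local normal forms the paper extracts from the theorem in Remark \ref{remark2A}.
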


\begin{remark}
\label{remark2A}

In the next section we use the previous theorem to give an explicit list for
\ the possible poles of $Z_{\Phi}(\omega,V^{(l-1)},f_{l})$. We fix some
notations needed for the next section. Let $\Delta\in\Sigma^{\ast}$ be an
$n$-dimensional simple cone generated by $a_{i}=\left(  a_{1,i},\ldots
,a_{n,i}\right)  $, $i=1,\ldots,n$. Then in the chart of $X\left(  K\right)  $
corresponding to $\Delta$, the map $h$ has the form%
\[%
\begin{array}
[c]{cccc}%
h: & K^{n} & \rightarrow & U\\
& y & \rightarrow & x,
\end{array}
\]
where $x_{i}=%
%TCIMACRO{\tprod \nolimits_{j=1}^{n}}%
%BeginExpansion
{\textstyle\prod\nolimits_{j=1}^{n}}
%EndExpansion
y_{j}^{a_{i,j}}$, with $\left[  a_{i,j}\right]  =\left[  a_{1},\ldots
,a_{n}\right]  $. Denote this chart by $W_{\Delta}$.

We note \ that since $h$ only maps to $U$ instead to the whole $K^{n}$, at
some charts it will not be defined \ everywhere on $K^{n}$. We set $I=\left\{
1,\ldots,r\right\}  $ with $r\leq n$, and
\[
T_{I}(K):=\left\{  y\in W_{\Delta}\mid y_{i}=0\Longleftrightarrow i\in
I\right\}  .
\]
We consider on $W_{\Delta}$ points on $h^{-1}\left(  0\right)  $ different
from the origin of $W_{\Delta}$. Let $b\in T_{I}(K)$ be one of these points.
We consider the following two cases: (I) $b\in T_{I}(K)\cap\widetilde{V}%
^{(l)}\left(  K\right)  $; (II) $b\in T_{I}(K)\cap\widetilde{V}^{(l-1)}\left(
K\right)  $ and $b\notin\widetilde{V}^{(l)}\left(  K\right)  $.
\end{remark}

\textbf{Case I}. In this case $b=\left(  0,\dots,0,b_{r+1},\dots,b_{n}\right)
$ with
\[
\widetilde{b}=(b_{r+1},\dots,b_{n})\in(K^{\times})^{n-r}.
\]
By Theorem \ref{proposition1}(2),
\begin{equation}
\left(  f_{i}\circ h\right)  \left(  y\right)  =\left(
%TCIMACRO{\dprod \limits_{j=1}^{r}}%
%BeginExpansion
{\displaystyle\prod\limits_{j=1}^{r}}
%EndExpansion
y_{j}^{d\left(  a_{j},\Gamma_{i}\right)  }\right)  \left(  \widetilde{f}%
_{i}\left(  y_{r+1},\ldots,y_{n}\right)  +O_{i}\left(  y_{1},\ldots
,y_{n}\right)  \right)  , \label{Eq14}%
\end{equation}
where the $O_{i}\left(  y_{1},\ldots,y_{n}\right)  $ are analytic functions
belonging to the ideal generated by $y_{1},\ldots,y_{r}$. By using that
\begin{equation}
\widetilde{b}\in\left(  K^{\times}\right)  ^{n-r}\cap\left\{  \widetilde
{f_{1}}(b_{r+1},\ldots,b_{n})=\dots=\widetilde{f_{l}}(b_{r+1},\ldots
,b_{n})=0\right\}  , \label{15}%
\end{equation}
and Theorem \ref{proposition1}(3), there exists a coordinate system
\[
y^{\prime}=(y_{1},\dots,y_{r},y_{r+1}^{\prime},\dots,y_{n}^{\prime})
\]
in a neighborhood $W_{b}$ of $b$ such that%
\begin{equation}
\left(  f_{i}\circ h\right)  \left(  y^{\prime}\right)  =\left(
%TCIMACRO{\dprod \limits_{j=1}^{r}}%
%BeginExpansion
{\displaystyle\prod\limits_{j=1}^{r}}
%EndExpansion
y_{j}^{d\left(  a_{j},\Gamma_{i}\right)  }\right)  y_{r+i}^{\prime},
\label{Eq20}%
\end{equation}
for $i=1,\ldots,l$, and
\[
\left\{  y_{r+1}^{\prime}=\ldots=y_{r+l}^{\prime}=0\right\}  \text{,
respectively }\left\{  y_{r+1}^{\prime}=\ldots=y_{r+l-1}^{\prime}=0\right\}
,
\]
is a local description in $W_{b}$ of $\widetilde{V}^{(l)}\left(  K\right)  $,
respectively of $\widetilde{V}^{(l-1)}\left(  K\right)  $.

\textbf{Case II}. This case is similar to the previous one except that
\[
\widetilde{b}\in\left(  K^{\times}\right)  ^{n-r}\cap\left\{  \widetilde
{f_{1}}(b_{r+1},\ldots,b_{n})=\dots=\widetilde{f_{l-1}}(b_{r+1},\ldots
,b_{n})=0\right\}  ,
\]
and (\ref{Eq20}) holds for $i=1,\ldots,l-1$, and
\begin{equation}
\left(  f_{l}\circ h\right)  \left(  y^{\prime}\right)  =\left(
%TCIMACRO{\dprod \limits_{j=1}^{r}}%
%BeginExpansion
{\displaystyle\prod\limits_{j=1}^{r}}
%EndExpansion
y_{j}^{d\left(  a_{j},\Gamma_{l}\right)  }\right)  u\left(  y^{\prime}\right)
, \label{20A}%
\end{equation}
with $\left\vert u\left(  y^{\prime}\right)  \right\vert _{K}=\left\vert
u\left(  b\right)  \right\vert _{K}$ for any $y^{\prime}\in W_{b}$, and
$\left\{  y_{r+1}^{\prime}=\ldots=y_{r+l-1}^{\prime}=0\right\}  $ is a local
description in $W_{b}$ of $\widetilde{V}^{(l-1)}\left(  K\right)  $.

\begin{remark}
\label{remark2B} (1) If we replace in Theorem \ref{proposition1} the condition
\textquotedblleft$V^{\left(  l\right)  }\left(  K\right)  $, $V^{\left(
l-1\right)  }\left(  K\right)  $ are germs of convenient non-degenerate
complete intersection varieties at the origin,\textquotedblright by
\textquotedblleft$V^{\left(  l\right)  }\left(  K\right)  $, $V^{\left(
l-1\right)  }\left(  K\right)  $ are convenient and non-degenerate complete
intersection varieties,\textquotedblright\textit{\ }and $U$ by $K^{n}$, with a
similar proof \ we obtain \ a \textit{global} version of Theorem
\ref{proposition1}, that is, the conclusions (1)-(4) \ are valid without the
condition \textquotedblleft there exists a neighborhood $U_{0}\subset U$ of
the origin.\textquotedblright\ In this case $\widetilde{V}^{(l)}\left(
K\right)  $ and $\widetilde{V}^{(l-1)}\left(  K\right)  $ may have components
that are disjoint with the exceptional divisor of $h$.

\noindent(2) The condition \textquotedblleft there exists a neighborhood
$U_{0}\subset U$ of the origin\textquotedblright\ may be replaced by
\textquotedblleft$U$\textit{ is a sufficiently small neighborhood of the
origin}.\textquotedblright
\end{remark}

\section{The Poles of the Meromorphic Continuation of $Z_{\Phi}(\omega
,V^{(l-1)},f_{l})$}

Given $a=\left(  a_{1},\ldots,a_{n}\right)  \in\mathbb{R}^{n}\setminus\left\{
0\right\}  $, we put $\sigma\left(  a\right)  :=a_{1}+\ldots+a_{n}$ and
$d\left(  a,\Gamma_{l}\right)  =\min_{m\in\Gamma_{l}}\left\langle
m,a\right\rangle $ as before. If $d\left(  a,\Gamma_{l}\right)  \neq0$, we
define
\[
\mathcal{P}\left(  a\right)  =\left\{  -\left(  \frac{\sigma\left(  a\right)
-%
%TCIMACRO{\dsum \limits_{j=1}^{l-1}}%
%BeginExpansion
{\displaystyle\sum\limits_{j=1}^{l-1}}
%EndExpansion
d\left(  a,\Gamma_{j}\right)  }{d\left(  a,\Gamma_{l}\right)  }\right)
+\frac{2\pi\sqrt{-1}k}{d\left(  a,\Gamma_{l}\right)  \log q},\text{\ }%
k\in\mathbb{Z}\right\}  .
\]

\begin{theorem}
\label{theorem1}Let $\boldsymbol{f}=(f_{1},\ldots,f_{l}):U\rightarrow K^{l}$,
$l\geq2$, $\boldsymbol{f}\left(  0\right)  =0$, be an analytic mapping defined
\ on a neighborhood $U\subseteq K^{n}$ of the origin. Assume that
$V^{(l)}\left(  K\right)  $ and $V^{(l-1)}\left(  K\right)  $ are germs of
convenient and non-degenerate complete intersection varieties, and that
$V^{(l-1)}\left(  K\right)  $ is a closed submanifold of $U$. Fix a rational
simple polyhedral subdivision $\Sigma^{\ast}=\Sigma^{\ast}\left(
\boldsymbol{f}\right)  $ of $\mathbb{R}_{+}^{n}$ subordinate to\ $\Gamma
\left(  \boldsymbol{f}\right)  $. If $U$ is sufficiently small and $\Phi$ is a
Bruhat-Schwartz function whose support is contained in $U$, then $Z_{\Phi
}(\omega,V^{(l-1)},f_{l})$ is a rational function of $t=\omega\left(
\pi\right)  $, and its poles belong to the set%
\[%
%TCIMACRO{\dbigcup \limits_{\substack{a\in Vert\left(  \Sigma^{\ast}\right)
%\\a\succ0}}}%
%BeginExpansion
{\displaystyle\bigcup\limits_{\substack{a\in Vert\left(  \Sigma^{\ast}\right)
\\a\succ0}}}
%EndExpansion
\mathcal{P}\left(  a\right)  \cup\left\{  -1+\frac{2\pi\sqrt{-1}k}{\log
q}\text{, }k\in\mathbb{Z}\right\}  .
\]

\end{theorem}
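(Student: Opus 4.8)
The strategy is to reduce the computation of $Z_{\Phi}(\omega,V^{(l-1)},f_{l})$ to a finite sum of monomial integrals over the charts of the toric modification $h:X(K)\to U$ furnished by Theorem \ref{proposition1}, and then read off the poles from the explicit monomial form. First I would use a partition of unity to write $\Phi$ as a finite sum of functions, each supported in a small ball; points of the support that avoid $h^{-1}(0)$ contribute a holomorphic term (there $h$ is an isomorphism and $f_l$ is locally a unit times a coordinate, so one gets at worst poles from the factor $\omega(f_l)$ contributing the $t=q^{-1}$-type pole $-1+2\pi\sqrt{-1}k/\log q$), so the real work is near $h^{-1}(0)=\bigcup_{a\succ0}E_a(K)$. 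By Proposition \ref{proposition0} the integral is already known to be rational in $t$ with negative real poles; here we must pin down \emph{which} poles can occur.

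Next I would localize at a point $b\in T_I(K)\cap h^{-1}(0)$ in a chart $W_\Delta$ corresponding to an $n$-dimensional simple cone $\Delta$ generated by $a_1,\dots,a_n\in\mathrm{Vert}(\Sigma^*)$, and split into Case I and Case II exactly as set up in Remark \ref{remark2A}. In Case II, the local model \eqref{Eq20}--\eqref{20A} gives, after the coordinate change $y'$, a chart in which $\widetilde V^{(l-1)}$ is the linear subspace $\{y'_{r+1}=\dots=y'_{r+l-1}=0\}$, $f_i\circ h = \bigl(\prod_{j=1}^r y_j^{d(a_j,\Gamma_i)}\bigr)y'_{r+i}$ for $i\le l-1$, and $f_l\circ h = \bigl(\prod_{j=1}^r y_j^{d(a_j,\Gamma_l)}\bigr)u(y')$ with $|u|$ locally constant. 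The key point is to compute the Gel'fand--Leray measure: since $h$ is a monomial map, $h^*\bigl(\bigwedge_{i=1}^n dx_i\bigr) = \bigl(\prod_{j=1}^r y_j^{\sigma(a_j)-1}\bigr)\cdot(\text{unit})\,\bigwedge dy_j$ on the chart, and restricting to $\widetilde V^{(l-1)}=\{y'_{r+i}=0\}$ together with the factorization of the $f_i$ produces a Gel'fand--Leray form on $\widetilde V^{(l-1)}$ whose modulus is $\prod_{j=1}^r |y_j|_K^{\,\sigma(a_j)-1-\sum_{i=1}^{l-1}d(a_j,\Gamma_i)}\cdot(\text{unit})\,|\bigwedge_{j\notin\{r+1,\dots,r+l-1\}}dy_j|$. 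Pulling $Z_\Phi$ through the isomorphism $h:\widetilde V^{(l-1)}\setminus h^{-1}(0)\to V^{(l-1)}\setminus\{0\}$ of Theorem \ref{proposition1}(4) then turns the integral into a finite sum of products of one-variable integrals $\int_{\mathrm{ord}(y_j)\ge \text{(something)}} |y_j|_K^{\,\omega(\pi)\,d(a_j,\Gamma_l)+\sigma(a_j)-\sum_{i=1}^{l-1}d(a_j,\Gamma_i)-1}\,|dy_j|$ over those $j\le r$ with $d(a_j,\Gamma_l)>0$, times a factor $\int \omega(u(y'))\,\dots$ that is holomorphic. Each such one-variable integral is a geometric series in $t$ contributing exactly a pole in $\mathcal P(a_j)$, which accounts for the $\bigcup_{a\succ0}\mathcal P(a)$ part of the claimed set (note $a_j\succ 0$ precisely because we are on $h^{-1}(0)$, by the description of $h^{-1}(0)$ recalled before Theorem \ref{proposition1}, while the coordinate-hyperplane generators $E_1,\dots,E_n$ sit on $\widetilde H^{(j)}$-components not meeting the relevant locus and only contribute the $t=q^{-1}$ pole). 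Case I is handled the same way: there one extra coordinate $y'_{r+l}$ is involved and $f_l\circ h$ is a monomial times $y'_{r+l}$, so the integral over $y'_{r+l}$ is an Igusa integral of a single coordinate giving the $-1+2\pi\sqrt{-1}k/\log q$ poles, and the $y_j$-integrals again give $\mathcal P(a_j)$.

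I expect the main obstacle to be the bookkeeping of the Gel'fand--Leray measure under the monomial change of variables on the \emph{submanifold} $\widetilde V^{(l-1)}$: one must verify carefully that restricting the pulled-back top form to $\widetilde V^{(l-1)}$ and dividing out by $\bigwedge_{i=1}^{l-1}d(f_i\circ h)$ produces exactly the exponent $\sigma(a_j)-1-\sum_{i=1}^{l-1}d(a_j,\Gamma_j)$ on each $y_j$, using the transversality in Theorem \ref{proposition1}(3) so that $dy'_{r+1},\dots,dy'_{r+l-1}$ together with the remaining differentials form a basis and the Jacobian of $y\mapsto y'$ is a local unit. A secondary technical point is checking that the various "unit" and "$O_i$" terms coming from \eqref{Eq14}--\eqref{20A} can be absorbed without creating new poles — this is where one uses that $|u(y')|_K$ and $|J|_K$ are locally constant (after shrinking $W_b$), exactly as in the proof of Proposition \ref{proposition0}. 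Once the local picture is pinned down, summing over the finitely many charts and finitely many points $b$ (finiteness coming from compactness of the support of $\Phi$ and of $h^{-1}(\mathrm{supp}\,\Phi)$) yields rationality in $t$ and the stated list of candidate poles.
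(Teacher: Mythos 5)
Your proposal follows essentially the same route as the paper: transport $Z_{\Phi}(\omega,V^{(l-1)},f_{l})$ through the toric modification of Theorem \ref{proposition1}(4), localize at points $b\in\widetilde{V}^{(l-1)}(K)$ in the charts $W_{\Delta}$, split into Cases I and II of Remark \ref{remark2A}, compute the pulled-back Gel'fand--Leray form to obtain the exponent $\sigma(a_{j})-1-\sum_{i=1}^{l-1}d(a_{j},\Gamma_{i})$ on each $y_{j}$, and reduce to a product of one-variable Igusa integrals whose poles land in $\mathcal{P}(a_{j})$ for $a_{j}\succ0$ (or at $-1$ from the $y'_{r+l}$-coordinate in Case I). This is precisely the paper's argument, including the identification of the Gel'fand--Leray bookkeeping as the key step; the only blemish is a notational slip in writing $\omega(\pi)$ rather than $s$ in the exponent of $|y_{j}|_{K}$.
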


\begin{proof}
We fix a rational simple polyhedral subdivision $\Sigma^{\ast}=\Sigma^{\ast
}\left(  \boldsymbol{f}\right)  $ of $\mathbb{R}_{+}^{n}$, and use the
notation introduced in Theorem \ref{proposition1} and Remark \ref{remark2A}.
By Theorem \ref{proposition1} (4),
\begin{equation}
h:\widetilde{V}^{(l-1)}(K)\smallsetminus h^{-1}\left(  0\right)  \rightarrow
V^{(l-1)}(K)\smallsetminus\left\{  0\right\}  \label{Eq3}%
\end{equation}
is a $K$-analytic isomorphism, where $h^{-1}\left(  0\right)  $ is the
exceptional divisor of $h$. We use (\ref{Eq3}) as a change of variables in
$Z_{\Phi}(\omega,V^{(l-1)},f_{l})$:%

\begin{align*}
Z_{\Phi}(\omega,V^{(l-1)},f_{l})  &  =\int\limits_{V^{(l-1)}\left(  K\right)
\setminus\left\{  0\right\}  }\Phi\left(  x\right)  \omega\left(
f_{l}(x)\right)  \mid\gamma_{GL}\left(  x\right)  \mid\\
&  =\int\limits_{\widetilde{V}^{(l-1)}(K)\smallsetminus h^{-1}\left(
0\right)  }\Phi^{\ast}\left(  y\right)  \omega\left(  f_{l}^{\ast}(y)\right)
\mid\gamma_{GL}^{\ast}\left(  y\right)  \mid
\end{align*}
for $\omega\in\Omega_{0}\left(  K^{\times}\right)  $, where $\mid\gamma
_{GL}\left(  x\right)  \mid$ is the measure induced on $V^{(l-1)}(K)$ by a
Gel'fand-Leray differential form, and $\gamma_{GL}^{\ast}\left(  y\right)  $
is the pullback of $\gamma_{GL}\left(  x\right)  $ by $h$.

Since $\Phi^{\ast}\left(  y\right)  $ has compact support, it is sufficient to
establish the theorem for integrals of type%
\[
I(\omega):=\int\limits_{\widetilde{V}^{(l-1)}(K)\smallsetminus h^{-1}\left(
0\right)  }\Theta\left(  y\right)  \omega\left(  f_{l}^{\ast}(y)\right)
\mid\gamma_{GL}^{\ast}\left(  y\right)  \mid,
\]
where $\Theta\left(  y\right)  $ is the characteristic function of a
neighborhood $W_{b}$ (an open compact set which may be schrinked when
necessary) of a point $b\in\widetilde{V}^{(l-1)}(K)$. Furthermore, we may
assume that $W_{b}=c+\pi^{e}R_{K}^{n}$, for some $c=$ $\left(  c_{1}%
,\ldots,c_{n}\right)  \in K^{n}$, and that $W_{b}\subset W_{\Delta}$, the
chart corresponding to a simple cone $\Delta\in$ $\Sigma^{\ast}$ generated by
$a_{i}=\left(  a_{1,i},\ldots,a_{n,i}\right)  $, $i=1,\ldots,n$,\ as in Remark
\ref{remark2A}. We take $I=\left\{  1,\ldots,r\right\}  $ with $r\leq n$, and
study the meromorphic continuation of $I(\omega)$ for the two cases considered
in Remark \ref{remark2A}.

\textbf{Case I} ($b\in T_{I}(K)\cap\widetilde{V_{l}}\left(  K\right)  $). In
this case $r+l\leq n$, and there exists a coordinate system $y^{\prime
}=\left(  y_{1},\ldots,y_{r},y_{r+1}^{\prime},\ldots,y_{n}^{\prime}\right)  $
in $W_{b}$ such that%
\[
\left(  f_{i}\circ h\right)  \left(  y^{\prime}\right)  =\left(
%TCIMACRO{\dprod \limits_{j=1}^{r}}%
%BeginExpansion
{\displaystyle\prod\limits_{j=1}^{r}}
%EndExpansion
y_{j}^{d\left(  a_{j},\Gamma_{i}\right)  }\right)  y_{r+i}^{\prime},
\]
for $i=1,\ldots,l$, $\widetilde{V}^{(l-1)}(K)=\left\{  y_{r+1}^{\prime}%
=\ldots=y_{r+l-1}^{\prime}=0\right\}  $, and
\[
h^{\ast}\left(
%TCIMACRO{\dbigwedge \limits_{1\leq j\leq n}}%
%BeginExpansion
{\displaystyle\bigwedge\limits_{1\leq j\leq n}}
%EndExpansion
dx_{j}\right)  =\left(  \eta\left(  y\right)
%TCIMACRO{\dprod \limits_{j=1}^{r}}%
%BeginExpansion
{\displaystyle\prod\limits_{j=1}^{r}}
%EndExpansion
y_{j}^{\sigma\left(  a_{j}\right)  -1}\right)
%TCIMACRO{\dbigwedge \limits_{1\leq j\leq r}}%
%BeginExpansion
{\displaystyle\bigwedge\limits_{1\leq j\leq r}}
%EndExpansion
dy_{j}\text{ }%
%TCIMACRO{\tbigwedge }%
%BeginExpansion
{\textstyle\bigwedge}
%EndExpansion
\text{ }%
%TCIMACRO{\tbigwedge \limits_{r+1\leq j\leq n}}%
%BeginExpansion
{\textstyle\bigwedge\limits_{r+1\leq j\leq n}}
%EndExpansion
y_{j}^{\prime},
\]
where $\eta\left(  y\right)  $ is a unit of the local ring \ $\mathcal{O}%
_{X(K),b}$. By \ schrinking $W_{b}$ if necessary, we assume that $\left\vert
\eta\left(  y\right)  \right\vert _{K}=\left\vert \eta\left(  b\right)
\right\vert _{K}$, for any $y\in W_{b}$. The form $\gamma_{GL}^{\ast}$ on
$\widetilde{V}^{(l-1)}(K)$ is determined by the condition%
\begin{align*}
&  \gamma_{GL}^{\ast}\left(  y^{\prime}\right)  \text{ }%
%TCIMACRO{\tbigwedge }%
%BeginExpansion
{\textstyle\bigwedge}
%EndExpansion
\text{ }%
%TCIMACRO{\tbigwedge \limits_{i=1}^{l-1}}%
%BeginExpansion
{\textstyle\bigwedge\limits_{i=1}^{l-1}}
%EndExpansion
d\left(  \left(
%TCIMACRO{\dprod \limits_{j=1}^{r}}%
%BeginExpansion
{\displaystyle\prod\limits_{j=1}^{r}}
%EndExpansion
y_{j}^{d\left(  a_{j},\Gamma_{i}\right)  }\right)  y_{r+i}^{\prime}\right) \\
&  =\left(
%TCIMACRO{\dprod \limits_{j=1}^{r}}%
%BeginExpansion
{\displaystyle\prod\limits_{j=1}^{r}}
%EndExpansion
y_{j}^{\sigma\left(  a_{j}\right)  -1}\right)
%TCIMACRO{\dbigwedge \limits_{1\leq j\leq r}}%
%BeginExpansion
{\displaystyle\bigwedge\limits_{1\leq j\leq r}}
%EndExpansion
dy_{j}\text{ }%
%TCIMACRO{\tbigwedge }%
%BeginExpansion
{\textstyle\bigwedge}
%EndExpansion
\text{ }%
%TCIMACRO{\tbigwedge \limits_{r+1\leq j\leq n}}%
%BeginExpansion
{\textstyle\bigwedge\limits_{r+1\leq j\leq n}}
%EndExpansion
dy_{j}^{\prime}.
\end{align*}
Since the left side of the previous formula equals
\[
\left(
%TCIMACRO{\dprod \limits_{j=1}^{r}}%
%BeginExpansion
{\displaystyle\prod\limits_{j=1}^{r}}
%EndExpansion
y_{j}^{\sum_{i=1}^{l-1}d\left(  a_{j},\Gamma_{i}\right)  }\right)  \gamma
_{GL}^{\ast}\left(  y^{\prime}\right)  \text{ }%
%TCIMACRO{\tbigwedge }%
%BeginExpansion
{\textstyle\bigwedge}
%EndExpansion
\text{ }%
%TCIMACRO{\tbigwedge \limits_{i=1}^{l-1}}%
%BeginExpansion
{\textstyle\bigwedge\limits_{i=1}^{l-1}}
%EndExpansion
dy_{r+i}^{\prime}%
\]
on $\widetilde{V}^{(l-1)}(K)$, we can take $\gamma_{GL}^{\ast}\left(
y^{\prime}\right)  $ equal to
\begin{equation}
\left(
%TCIMACRO{\dprod \limits_{j=1}^{r}}%
%BeginExpansion
{\displaystyle\prod\limits_{j=1}^{r}}
%EndExpansion
y_{j}^{\sigma\left(  a_{j}\right)  -1-\sum_{i=1}^{l-1}d\left(  a_{j}%
,\Gamma_{i}\right)  }\right)
%TCIMACRO{\dbigwedge \limits_{1\leq j\leq r}}%
%BeginExpansion
{\displaystyle\bigwedge\limits_{1\leq j\leq r}}
%EndExpansion
dy_{j}\text{ }%
%TCIMACRO{\tbigwedge }%
%BeginExpansion
{\textstyle\bigwedge}
%EndExpansion
\text{ }dy_{r+l}^{\prime}\text{ }%
%TCIMACRO{\tbigwedge }%
%BeginExpansion
{\textstyle\bigwedge}
%EndExpansion
\text{ }%
%TCIMACRO{\tbigwedge \limits_{r+l+1\leq j\leq n}}%
%BeginExpansion
{\textstyle\bigwedge\limits_{r+l+1\leq j\leq n}}
%EndExpansion
dy_{j}^{\prime} \label{dif_form}%
\end{equation}
which is an analytic form, and then
\begin{equation}
\sigma\left(  a_{j}\right)  -1-\sum_{i=1}^{l-1}d\left(  a_{j},\Gamma
_{i}\right)  \geq0,\text{ for }j=1,\ldots,r. \label{positive}%
\end{equation}

We set $Y(K):=W_{b}\cap\left(  \widetilde{V}^{(l-1)}(K)\smallsetminus
h^{-1}\left(  0\right)  \right)  $,%

\[
\widetilde{\chi}(y^{\prime}):=\chi\left(  ac\text{ }\left(
%TCIMACRO{\dprod \limits_{j=1}^{r}}%
%BeginExpansion
{\displaystyle\prod\limits_{j=1}^{r}}
%EndExpansion
y_{j}^{d\left(  a_{j},\Gamma_{l}\right)  }\right)  \right)  \chi\left(
ac\text{ }y_{r+l}^{\prime}\right)  ,
\]
and
\[
\left\vert dy^{\prime\prime}\right\vert :=\left\vert
%TCIMACRO{\dbigwedge \limits_{j=1}^{r}}%
%BeginExpansion
{\displaystyle\bigwedge\limits_{j=1}^{r}}
%EndExpansion
dy_{j}\text{ }%
%TCIMACRO{\tbigwedge }%
%BeginExpansion
{\textstyle\bigwedge}
%EndExpansion
\text{ }dy_{r+l}^{\prime}\text{ }%
%TCIMACRO{\tbigwedge }%
%BeginExpansion
{\textstyle\bigwedge}
%EndExpansion
\text{ }%
%TCIMACRO{\tbigwedge \limits_{j=r+l+1}^{n}}%
%BeginExpansion
{\textstyle\bigwedge\limits_{j=r+l+1}^{n}}
%EndExpansion
dy_{j}^{\prime}\right\vert .
\]
We can express $I(\omega)$ as follows:
\[
\int\limits_{Y(K)}\widetilde{\chi}(y^{\prime})%
%TCIMACRO{\dprod \limits_{j=1}^{r}}%
%BeginExpansion
{\displaystyle\prod\limits_{j=1}^{r}}
%EndExpansion
\left(  \left\vert y_{j}\right\vert _{K}^{d\left(  a_{j},\Gamma_{l}\right)
s+\sigma\left(  a_{j}\right)  -1-\sum_{i=1}^{l-1}d\left(  a_{j},\Gamma
_{i}\right)  }\right)  \left\vert y_{r+l}^{\prime}\right\vert _{K}%
^{s}\left\vert dy^{\prime\prime}\right\vert ,
\]
i.e., $I(\omega)$ is the product (up to a constant) of the following two
integrals:%
\begin{equation}%
%TCIMACRO{\dprod \limits_{j=1}^{r}}%
%BeginExpansion
{\displaystyle\prod\limits_{j=1}^{r}}
%EndExpansion
\text{ }\int\limits_{c_{j}+\pi^{e}R_{K}}\chi\left(  ac\text{ }y_{j}\right)
^{d\left(  a_{j},\Gamma_{l}\right)  }\left\vert y_{j}\right\vert
_{K}^{d\left(  a_{j},\Gamma_{l}\right)  s+\sigma\left(  a_{j}\right)
-1-\sum_{i=1}^{l-1}d\left(  a_{j},\Gamma_{i}\right)  }\left\vert
dy_{j}\right\vert , \label{Eq4}%
\end{equation}
and
\begin{equation}
\int\limits_{c_{r+l}+\pi^{e}R_{K}}\chi\left(  ac\text{ }y_{r+l}^{\prime
}\right)  \left\vert y_{r+l}^{\prime}\right\vert _{K}^{s}\left\vert
dy_{r+l}^{\prime}\right\vert , \label{Eq5}%
\end{equation}
for $\operatorname{Re}(s)>0$. By applying Lemma 8.2.1 in \cite{I2}\ we obtain
the meromorphic continuation for integrals in (\ref{Eq4}) and (\ref{Eq5}).
Therefore the real parts of the poles of the meromorphic continuation of
$Z_{\Phi}(\omega,V^{(l-1)},f_{l})$\ have the form%
\[
-\left(  \frac{\sigma\left(  a\right)  -%
%TCIMACRO{\dsum \limits_{j=1}^{l-1}}%
%BeginExpansion
{\displaystyle\sum\limits_{j=1}^{l-1}}
%EndExpansion
d\left(  a,\Gamma_{j}\right)  }{d\left(  a,\Gamma_{l}\right)  }\right)
\text{, }a\in Vert\left(  \Sigma^{\ast}\right)  \text{ with }a\succ0\text{, or
}-1.
\]

For each strictly positive vertex $a$, we have an exceptional variety $E_{a}$.
The exceptional varieties $E_{a}$ \ which do not intersect $\widetilde
{V}^{(l-1)}(K)$ do certainly not induce a pole. For a vertex $a$ for which
$E_{a}$ intersects $\widetilde{V}^{(l-1)}(K)$, we already proved that
$\sigma\left(  a\right)  -\sum_{j=1}^{l-1}d\left(  a,\Gamma_{j}\right)  >0$
(cf. (\ref{positive})). This complete the description of the possible poles of
$Z_{\Phi}(\omega,V^{(l-1)},f_{l})$.

\textbf{Case II }($b\in T_{I}(K)\cap\widetilde{V}^{(l-1)}\left(  K\right)  $
and $b\notin\widetilde{V}^{(l)}\left(  K\right)  $). In this case $r+l-1\leq
n$. By using the same reasoning as in the previous case one gets that
$I(\omega)$ equals a constant multiplied by (\ref{Eq4}).
\end{proof}

\begin{remark}
\label{remark5} If in Theorem \ref{theorem1} we\ assume that $\boldsymbol{f}%
=(f_{1},\ldots,f_{l})$, $\boldsymbol{f}\left(  0\right)  =0$, is a polynomial
mapping, $U=K^{n}$, and $V^{(l)}\left(  K\right)  $ and $V^{(l-1)}\left(
K\right)  $ are convenient and non-degenerate intersection varieties, with
$V^{(l-1)}\left(  K\right)  $ a closed submanifold of $K^{n}$. Then the
conclusion of Theorem \ref{theorem1} holds without the condition
\textquotedblleft$U$ is sufficiently small.\textquotedblright
\end{remark}

\subsection{The Largest Real Part of the Poles of $Z_{\Phi}(s,V^{(l-1)}%
,f_{l})$}

Given a $\boldsymbol{f}:U\rightarrow K^{l}$, $\boldsymbol{f}\left(  0\right)
=0$, an analytic mapping defined \ on a neighborhood $U\subseteq K^{n}$ of the
origin, and a fixed rational simple polyhedral subdivision $\Sigma^{\ast
}=\Sigma^{\ast}\left(  \boldsymbol{f}\right)  $ of $\mathbb{R}_{+}^{n}$
subordinate to $\Gamma\left(  \boldsymbol{f}\right)  $, we set
\[
Z_{\Phi}(s,V^{(l-1)},f_{l}):=\int\limits_{V^{(l-1)}\left(  K\right)  }%
\Phi\left(  x\right)  \left\vert f_{l}(x)\right\vert _{K}^{s}\mid\gamma
_{GL}\left(  x\right)  \mid,
\]
for $\operatorname{Re}(s)>0$. As always we will identify $Z_{\Phi}%
(s,V^{(l-1)},f_{l})$ with its meromorphic continuation. The correspondence
\[%
\begin{array}
[c]{ccc}%
S(K^{n}) & \rightarrow & \mathbb{Q}\left(  q^{-s}\right) \\
&  & \\
\phi & \rightarrow & Z_{\Phi}(s,V^{(l-1)},f_{l}),
\end{array}
\]
defines a meromorphic distribution on $S(K^{n})$. By the poles of $Z_{\cdot
}(s,V^{(l-1)},f_{l})$ we mean the set $\cup_{\phi\in S(K^{n})}\left\{
\text{poles of }Z_{\phi}(s,V^{(l-1)},f_{l})\right\}  $. By using the fact that
$\boldsymbol{f}\left(  0\right)  =0$, and that $Z_{\phi}(s,V^{(l-1)},f_{l})$
can be expressed as a finite sum of Igusa's local zeta functions, it follows
from \cite[Lemma 2.6]{V-Z}\ that $Z_{\phi}(s,V^{(l-1)},f_{l})$\ has at least a pole.

We set $\beta_{\boldsymbol{f}}$ to be the largest real part of the poles of
$Z_{\cdot}(s,V^{(l-1)},f_{l})$, and $j_{\boldsymbol{f}}$ to be the maximal
order of the poles of $Z_{\cdot}(s,V^{(l-1)},f_{l})$\ having real part
$\beta_{\boldsymbol{f}}$. By abuse of language we will say that $\beta
_{\boldsymbol{f}}$ is the largest real part of the poles of $Z_{\phi
}(s,V^{(l-1)},f_{l})$. We also set $\gamma_{\boldsymbol{f}}$ \ to be the
maximum of the
\[
-\left(  \frac{\sigma\left(  a\right)  -%
%TCIMACRO{\dsum \limits_{j=1}^{l-1}}%
%BeginExpansion
{\displaystyle\sum\limits_{j=1}^{l-1}}
%EndExpansion
d\left(  a,\Gamma_{j}\right)  }{d\left(  a,\Gamma_{l}\right)  }\right)  ,
\]
where $a$ runs through all the strictly positive vectors in Vert$\left(
\Sigma^{\ast}\right)  $ satisfying $d\left(  a,\Gamma_{l}\right)  \neq0$
and$\ \sigma\left(  a\right)  -\sum_{j=1}^{l-1}d\left(  a,\Gamma_{j}\right)
>0$.

\begin{remark}
\label{remark5A} If $\gamma_{\boldsymbol{f}}$ $>-1$, then by Theorem
\ref{theorem1}, $\gamma_{\boldsymbol{f}}$ $\geq\beta_{\boldsymbol{f}}$ and
$j_{\boldsymbol{f}}$ $\leq n-l+1$.
\end{remark}

If $\boldsymbol{f}=(f_{1},\ldots,f_{l})$, $\boldsymbol{f}\left(  0\right)
=0$, is a polynomial mapping, $U=K^{n}$, and $V^{(l)}\left(  K\right)  $ and
$V^{(l-1)}\left(  K\right)  $ are convenient and non-degenerate complete
intersection varieties, with $V^{(l-1)}\left(  K\right)  $ a closed
submanifold of $K^{n}$, then, with the obvious analogous definitions for
$\beta_{\boldsymbol{f}}$ , $j_{\boldsymbol{f}}$ and $\gamma_{\boldsymbol{f}}$,
and deleting the condition \textquotedblleft strictly
positive\textquotedblright\ in the definition of $\gamma_{\boldsymbol{f}}$,
Remark \ref{remark5A}\ holds.

The largest real part of the poles of local zeta functions has been studied
intensively \cite{D1a}, \cite{D-H}, \cite{I1}, \cite{Var}, \cite{V-Z},
\cite{Z0}, \cite{Z1}. In the case $l>1$ the largest real part of the poles of
$Z_{\Phi}(s,V^{(l-1)},f_{l})$ is not completely determined by the $\Gamma_{i}$.

\subsection{Vanishing of $Z_{\Phi}(\omega,V^{(l-1)},f_{l})$}

Given $\lambda\in K^{\times}$, we set%
\[
V^{\left(  l,\lambda\right)  }\left(  K\right)  :=V^{\left(  l,\lambda\right)
}=\left\{  z\in U\mid f_{1}(z)=\ldots=f_{l-1}(z)=0\text{, }f_{l}%
(z)=\lambda\right\}  .
\]
Since \ any $\omega\in\Omega\left(  K^{\times}\right)  $ can be expressed as
$\omega\left(  z\right)  =\chi\left(  ac\text{ }z\right)  \left\vert
z\right\vert _{K}^{s}$, $s\in\mathbb{C}$, for $z\in K^{\times}$, we use the
notation $Z_{\Phi}(\omega,V^{(l-1)},f_{l}):=Z_{\Phi}(s,\chi,V^{(l-1)}%
,f_{l}):=Z_{\Phi}(s,\chi)$.

\begin{theorem}
\label{theorem2}Assume that the $l-$form $%
%TCIMACRO{\tbigwedge \nolimits_{i=1}^{l}}%
%BeginExpansion
{\textstyle\bigwedge\nolimits_{i=1}^{l}}
%EndExpansion
df_{i}$, with $2\leq l\leq n$, does not vanish on $V^{\left(  l,\lambda
\right)  }$, for any $\lambda\in K^{\times}$. Then, with the hypotheses of
Theorem \ref{theorem1}, there exists $e\left(  \Phi\right)  >0$ in
$\mathbb{N}$ such that $Z_{\Phi}(s,\chi,V^{(l-1)},f_{l})=0$, for every
$s\in\mathbb{C}$, unless the conductor $c\left(  \chi\right)  $ of $\chi$
satisfies $c\left(  \chi\right)  \leq e\left(  \Phi\right)  $.
\end{theorem}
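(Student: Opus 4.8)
The plan is to run the reduction from the proof of Theorem \ref{theorem1} and then observe that, once $c(\chi)$ is large, every elementary integral that occurs is forced to vanish. The key computation is: for $c\in K$, $e\in\mathbb{N}$, $N\in\mathbb{Z}_{\ge0}$ and $\alpha\in\mathbb{C}$ with $\operatorname{Re}(\alpha)$ large,
\[
\int_{c+\pi^{e}R_{K}}\chi(ac\,y)^{N}\,|y|_{K}^{\alpha}\,|dy|
\]
equals a nonzero meromorphic function of $s$ times $\int_{R_{K}^{\times}}\chi(u)^{N}\,|du|$ when $c\in\pi^{e}R_{K}$, and a nonzero scalar times $\int_{1+\pi^{e-ord(c)}R_{K}}\chi(u)^{N}\,|du|$ when $c\notin\pi^{e}R_{K}$; by meromorphic continuation this holds for all $s$. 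Hence this integral vanishes unless $\chi^{N}$ is trivial on $R_{K}^{\times}$, respectively on $1+\pi^{e-ord(c)}R_{K}$. Since $\chi^{N}|_{R_{K}^{\times}}=1$ is equivalent to $\chi|_{(R_{K}^{\times})^{N}}=1$ and $(R_{K}^{\times})^{N}$ is an open subgroup, $\chi^{N}|_{R_{K}^{\times}}=1$ forces $c(\chi)\le\kappa(N)$ for a constant $\kappa(N)$ depending only on $N$ and $K$; as only finitely many exponents $N=d(a,\Gamma_{l})$ (with $a\in$ Vert$(\Sigma^{\ast})$, $a\succ0$) occur, put $e_{1}:=1+\max_{a}\kappa(d(a,\Gamma_{l}))$.

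First I would pull $Z_{\Phi}(s,\chi,V^{(l-1)},f_{l})$ back through the toric modification $h$ of Theorem \ref{proposition1}, obtaining an integral over $\widetilde{V}^{(l-1)}(K)\smallsetminus h^{-1}(0)$, and cover $\operatorname{supp}(\Phi^{\ast})\cap\widetilde{V}^{(l-1)}(K)$ by finitely many small balls $W_{b}$, as in the proof of Theorem \ref{theorem1}. For each such $b$ I claim the local integral vanishes identically in $s$ once $c(\chi)$ is big enough. (i) If $b\in\widetilde{V}^{(l)}(K)$, the Case I normal form of Remark \ref{remark2A} (valid here by Remark \ref{remark2B} and Theorem \ref{proposition1}(3)) expresses $f_{l}^{\ast}$ restricted to $\widetilde{V}^{(l-1)}(K)$ as $\big(\prod_{j\le r}y_{j}^{d(a_{j},\Gamma_{l})}\big)y'_{r+l}$ with $y'_{r+l}$ an analytic coordinate and $y'_{r+l}(b)=0$, and exactly the computation of the proof of Theorem \ref{theorem1} makes the local integral a product through the factor $\int_{\pi^{e}R_{K}}\chi(ac\,y'_{r+l})\,|y'_{r+l}|_{K}^{s}\,|dy'_{r+l}|$ (this is (\ref{Eq5})), which vanishes once $c(\chi)\ge1$. (ii) If $b\in\widetilde{V}^{(l-1)}(K)\smallsetminus\widetilde{V}^{(l)}(K)$ and $b\in h^{-1}(0)$, the Case II normal form gives $f_{l}^{\ast}=\big(\prod_{j\le r}y_{j}^{d(a_{j},\Gamma_{l})}\big)u$ with $u$ a unit; since $h(b)=0$ forces $f_{l}^{\ast}(b)=f_{l}(0)=0$ while $u(b)\ne0$, some $d(a_{j_{0}},\Gamma_{l})\ge1$, and the local integral (a constant times (\ref{Eq4})) factors through $\int_{\pi^{e}R_{K}}\chi(ac\,y_{j_{0}})^{d(a_{j_{0}},\Gamma_{l})}\,|y_{j_{0}}|_{K}^{(\cdots)}\,|dy_{j_{0}}|$, which vanishes once $c(\chi)>e_{1}$.

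(iii) The remaining case is $b\in\widetilde{V}^{(l-1)}(K)\smallsetminus\widetilde{V}^{(l)}(K)$ with $b\notin h^{-1}(0)$; here $h$ is a local isomorphism near $b$, and since $h^{-1}(0)=\bigcup_{a\succ0}E_{a}(K)$ while $\operatorname{div}(h^{\ast}f_{l})=\widetilde{H}^{(l)}(K)+\sum_{a\succ0}d(a,\Gamma_{l})E_{a}(K)$ and $\widetilde{V}^{(l)}(K)=\widetilde{V}^{(l-1)}(K)\cap\widetilde{H}^{(l)}(K)$, the point $p:=h(b)$ must satisfy $f_{l}(p)=\lambda\ne0$. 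This is where the hypothesis enters: because $\bigwedge_{i=1}^{l}df_{i}$ does not vanish at $p\in V^{(l,\lambda)}$ and $df_{1},\dots,df_{l-1}$ are independent on the submanifold $V^{(l-1)}(K)$, the restriction $f_{l}|_{V^{(l-1)}}$ has nonvanishing differential at $p$, so $w_{1}:=f_{l}|_{V^{(l-1)}}$ is part of an analytic coordinate system on a small ball $W_{p}\subset V^{(l-1)}(K)$; shrinking $W_{p}$ so that $|f_{l}|_{K}$ and the Jacobian of the Gel'fand-Leray form are constant, the local integral becomes a nonzero scalar times $|\lambda|_{K}^{s}\int_{\lambda+\pi^{e_{p}}R_{K}}\chi(ac\,w_{1})\,|dw_{1}|$, which vanishes once $c(\chi)>e_{p}-ord(\lambda)$. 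The points $p$ occurring here lie in a compact subset of $V^{(l-1)}(K)\smallsetminus\{0\}$ on which $ord(f_{l})$ is bounded below and, by compactness, the radii $e_{p}$ may be taken bounded; hence there is an integer $e_{2}$ such that $c(\chi)>e_{2}$ kills every type-(iii) contribution. (For $U$ sufficiently small there are no type-(iii) pieces and (i)--(ii) alone suffice; type (iii) is what requires the hypothesis in the global/polynomial setting of Remark \ref{remark5}.)

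Finally, put $e(\Phi):=\max\{1,e_{1},e_{2}\}$. By (i)--(iii), if $c(\chi)>e(\Phi)$ then every local piece of $Z_{\Phi}(s,\chi,V^{(l-1)},f_{l})$ vanishes identically in $s$, and therefore $Z_{\Phi}(s,\chi,V^{(l-1)},f_{l})\equiv0$, which is the assertion. I expect the only genuinely delicate point to be the normal form used in case (i) for $b\in\widetilde{V}^{(l)}(K)\smallsetminus h^{-1}(0)$, where the hypothesis contributes nothing: one must use Theorem \ref{proposition1}(3), namely that $\widetilde{V}^{(l)}(K)$ is a smooth hypersurface inside the submanifold $\widetilde{V}^{(l-1)}(K)$ cut out transversely by the strict transform $\widetilde{H}^{(l)}(K)$, so that the pullback of $f_{l}$ there is a unit times a coordinate. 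Once this is granted, everything reduces to the elementary vanishing recorded in the first paragraph, which is the usual ``killing of highly ramified characters'' underlying Igusa's method.
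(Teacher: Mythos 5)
Your proof is essentially correct, but it follows a genuinely different route from the paper's. The paper does \emph{not} pass through the toric resolution at all: it uses the submanifold coordinate change already set up in the proof of Proposition~\ref{proposition0} to write $Z_{\Phi}(\omega,V^{(l-1)},f_{l})$ as a finite sum of classical one-variable Igusa zeta functions $I(\omega)=|J(b)|_{K}^{-1}\int\Theta(y)\,\omega(h(y))\,|\bigwedge_{i=l}^{n}dy_{i}|$, shows that the hypothesis $\bigwedge_{i=1}^{l}df_{i}\neq 0$ on $V^{(l,\lambda)}$ is \emph{equivalent} to $\nabla h\neq 0$ on $\operatorname{supp}(\Theta)\cap\{h\neq 0\}$, and then appeals in one stroke to Igusa's Theorem~8.4.1, which is exactly the black-box ``killing of highly ramified characters'' statement. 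Your argument instead reproves that black box from scratch by pulling back through $h$ and checking the elementary vanishing of the one-dimensional factors $\int_{c+\pi^{e}R_{K}}\chi(ac\,y)^{N}|y|^{\alpha}\,|dy|$ coming out of the Case~I/II normal forms of Remark~\ref{remark2A}. The paper's route is shorter because it delegates the ramification estimate to Igusa's theorem; yours is more self-contained and has the pedagogical advantage of showing \emph{where} the ramification bound $e(\Phi)$ comes from (the exponents $d(a,\Gamma_{l})$ and the radii of the finitely many covering balls), but it carries the cost of re-deriving the resolution bookkeeping of Theorem~\ref{theorem1} inside this proof, when the paper already had a resolution-free reduction available.

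Two small blemishes in your write-up. First, the parenthetical claim in (iii) that ``for $U$ sufficiently small there are no type-(iii) pieces'' is not justified and should be dropped: even with $\operatorname{supp}(\Phi)$ tiny, $h^{-1}(U)$ is an open neighbourhood of $h^{-1}(0)$ in $X(K)$, so $\operatorname{supp}(\Phi^{\ast})\cap\widetilde{V}^{(l-1)}(K)$ certainly contains points off the exceptional divisor and off $\widetilde{V}^{(l)}(K)$; moreover, if (i)--(ii) alone sufficed then the hypothesis on $\bigwedge df_{i}$ would be superfluous in the germ setting, which the paper's statement of the theorem does not suggest. Second, the compactness phrasing in (iii) is slightly off: the set of all type-(iii) points $p$ in $\operatorname{supp}(\Phi)$ is \emph{not} compact (it is not closed; it can accumulate at $\{0\}$ and at $V^{(l)}(K)$), and $\operatorname{ord}(f_{l})$ is \emph{not} bounded below on it. What actually bounds $e_{2}$ is simply the finiteness of your chosen cover of the compact set $\operatorname{supp}(\Phi^{\ast})\cap\widetilde{V}^{(l-1)}(K)$: each ball $W_{b}$ gets its own $e_{p}$, there are only finitely many balls, and you take the maximum. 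This is exactly the right argument, so say it that way. Finally, it is worth noting explicitly that the Case~I normal form of Remark~\ref{remark2A} is stated there only for $b\in h^{-1}(0)$; your use of it when $b\in\widetilde{V}^{(l)}(K)\smallsetminus h^{-1}(0)$ is legitimate but requires the degenerate instance $r=0$ of the formula (no $\prod_{j\le r}y_{j}^{d(a_{j},\Gamma_{l})}$ prefactor), justified directly by Theorem~\ref{proposition1}(3); you gesture at this but should make it explicit.
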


\begin{proof}
By using the proof of Proposition \ref{proposition0}, and all the notation
introduced there, \ we have that $Z_{\Phi}(\omega,V^{(l-1)},f_{l})$ can be
expressed as linear combination of classical Igusa's zeta functions, see
(\ref{Igusa}). The result follows from Theorem 8.4.1 in \cite{I2} by the
following assertion.

\textbf{Claim }$\nabla h\left(  y\right)  \neq0$, for any $y$ $\in$ support of
$\Theta$ $\cap$ $\left\{  h\left(  y\right)  \neq0\right\}  $.

The hypothesis $%
%TCIMACRO{\tbigwedge \nolimits_{i=1}^{l}}%
%BeginExpansion
{\textstyle\bigwedge\nolimits_{i=1}^{l}}
%EndExpansion
df_{i}\neq0$ on $V^{\left(  l,\lambda\right)  }$, for any $\lambda\in
K^{\times}$, is equivalent to the matrix $\left[  \frac{\partial f_{i}%
}{\partial x_{j}}\left(  z\right)  \right]  $ has rank $l$, for any $z\in
V^{\left(  l,\lambda\right)  }\left(  K\right)  \setminus V^{\left(  l\right)
}\left(  K\right)  $. We take, as in the proof of Proposition
\ref{proposition0}, a \ point $b\in V^{(l-1)}\left(  K\right)  $ and a
coordinate system $y=\left(  y_{1},\ldots,y_{n}\right)  =\phi\left(  x\right)
$ around $b$ such that%
\[
V^{(l-1)}\left(  K\right)  =\left\{  y_{i}=0,\text{ }i=1,\ldots,l-1\right\}
,
\]
locally, and $h\left(  y\right)  =\left(  f_{l}\circ\phi^{-1}\right)  \left(
0,\ldots,0,y_{l},\ldots,y_{n}\right)  $, then%
\[
rank_{K}\left[  \frac{\partial f_{i}}{\partial x_{j}}\right]  =rank_{K}\left[
\begin{array}
[c]{ccc}%
I_{\left(  l-1\right)  \times\left(  l-1\right)  } &  & O_{\left(  l-1\right)
\times\left(  n-l+1\right)  }\\
&  & \\
O_{\left(  1\right)  \times\left(  l-1\right)  } & \ldots & \frac{\partial
h}{\partial y_{l}}\ldots\frac{\partial h}{\partial y_{n}}%
\end{array}
\right]  =l,
\]
where $I_{l-1\times l-1}$ is the identity matrix and $O_{\left(  l-1\right)
\times\left(  n-l+1\right)  }$, $O_{\left(  1\right)  \times\left(
l-1\right)  }$ are zero matrices, at any point of support of $\Theta$
$\cap\left\{  h\left(  y\right)  \neq0\right\}  $. Hence, at any point
$y_{0}\in$support of $\Theta$ $\cap$ $\left\{  h\left(  y\right)
\neq0\right\}  $, there exists $i_{0}\in\left\{  l,\ldots,n\right\}  $ such
that $\frac{\partial h}{\partial y_{i_{0}}}\left(  y_{0}\right)  \neq0$.
\end{proof}

\begin{remark}
\label{remark6} The conclusion in Theorem \ref{theorem2} holds, if the
condition \textquotedblleft the hypotheses of Theorem \ref{theorem1}%
\textquotedblright\ is replaced by \textquotedblleft the hypotheses of Remark
\ref{remark5}.\textquotedblright
\end{remark}

\subsection{The Oscillatory Integrals $E_{\Phi}(z)$}

In this section we study the asymptotic behavior of the oscillatory integral
defined in the introduction:%
\[
E_{\Phi}(z,V^{(l-1)},f_{l})=E_{\Phi}(z)=\int\limits_{V^{(l-1)}\left(
K\right)  }\Phi\left(  x\right)  \Psi\left(  zf_{l}(x)\right)  \mid\gamma
_{GL}\left(  x\right)  \mid,
\]
where $z=u\pi^{-m}$, $u\in R_{K}^{\times}$, $m\in\mathbb{Z}$, and $\Psi\left(
\cdot\right)  $ is the standard additive character on $K$.

Let Coeff$_{t^{k}}Z_{\Phi}(s,\chi)$ denote the coefficient $c_{k}$ in the
power expansion of $Z_{\Phi}(s,\chi)$ in the variable $t=q^{-s}.$

\begin{proposition}
\label{proposition3}Assume that the $l-$form $%
%TCIMACRO{\tbigwedge \nolimits_{i=1}^{l}}%
%BeginExpansion
{\textstyle\bigwedge\nolimits_{i=1}^{l}}
%EndExpansion
df_{i}$, with $2\leq l\leq n$, does not vanish on $V^{(l,\lambda)}$, for any
$\lambda\in K^{\times}$. Then, with the hypotheses of Theorem \ref{theorem1},%
\begin{align*}
E_{\Phi}\left(  u\pi^{-m}\right)   &  =Z_{\Phi}^{\left(  l\right)  }%
(0,\chi_{\text{triv}})+\text{Coeff}_{t^{m-1}}\frac{\left(  t-q\right)
Z_{\Phi}(s,\chi_{\text{triv}})}{\left(  q-1\right)  \left(  1-t\right)  }+\\
&
%TCIMACRO{\dsum \limits_{\chi\neq\chi_{\text{triv}}}}%
%BeginExpansion
{\displaystyle\sum\limits_{\chi\neq\chi_{\text{triv}}}}
%EndExpansion
g_{\chi^{-1}}\chi\left(  u\right)  \text{Coeff}_{t^{m-c\left(  \chi\right)  }%
}Z_{\Phi}^{\left(  l\right)  }(s,\chi),
\end{align*}
where $c\left(  \chi\right)  $\ denotes the conductor of $\chi$, and $g_{\chi
}$ denotes the Gaussian sum%
\[
g_{\chi}=\left(  q-1\right)  ^{-1}q^{1-c\left(  \chi\right)  }%
%TCIMACRO{\dsum \limits_{v\in\left(  R_{K}/P_{K}^{c\left(  \chi\right)
%}\right)  ^{\times}}}%
%BeginExpansion
{\displaystyle\sum\limits_{v\in\left(  R_{K}/P_{K}^{c\left(  \chi\right)
}\right)  ^{\times}}}
%EndExpansion
\chi\left(  v\right)  \Psi\left(  v/\pi^{c\left(  \chi\right)  }\right)  .
\]

\end{proposition}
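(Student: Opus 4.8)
The plan is to relate $E_\Phi(u\pi^{-m})$ to $Z_\Phi(s,\chi,V^{(l-1)},f_l)$ via the standard Fourier-analytic identity that expresses the additive character $\Psi(zf_l(x))$ as a sum over multiplicative characters, and then to extract the asymptotic behavior from the Laurent coefficients of the zeta function. Concretely, first I would partition $V^{(l-1)}(K)$ according to $\mathrm{ord}(f_l(x))$ and $\mathrm{ac}\,f_l(x)$. On the locus where $f_l(x)=0$ the oscillatory factor is trivial, contributing $Z_\Phi^{(l)}(0,\chi_{\text{triv}})$ (the value of the zeta function restricted to $V^{(l)}$, or equivalently the $t=0$ specialization of the piece supported on $V^{(l)}$). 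On the complement $V^{(l,\lambda)}$ for various $\lambda\neq 0$, I would use the local constancy of $\Psi$ together with the classical formula
\[
\Psi\left(u\pi^{-m}y\right)=\frac{-q}{q-1}\,\mathbf{1}[\mathrm{ord}(y)\geq m]+\mathbf{1}[\mathrm{ord}(y)\geq m-1]+\sum_{\chi\neq\chi_{\text{triv}}}g_{\chi^{-1}}\chi(u)\,\chi(\mathrm{ac}\,y)\,\mathbf{1}[\mathrm{ord}(y)=m-c(\chi)],
\]
valid for $y\in K^\times$, where $g_\chi$ is the Gaussian sum in the statement. This is the key algebraic input; it is the $p$-adic stationary-phase decomposition used by Igusa.

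Substituting this identity into the defining integral for $E_\Phi(u\pi^{-m})$ and interchanging the (finite) character sum with the integral, each term becomes an integral over $V^{(l-1)}(K)$ of $\Phi(x)$ against an indicator of $\mathrm{ord}(f_l(x))$ in a prescribed range, weighted by $\chi(\mathrm{ac}\,f_l(x))$. Each such integral is precisely a Laurent coefficient of $Z_\Phi(s,\chi,V^{(l-1)},f_l)$ in $t=q^{-s}$: writing $Z_\Phi(s,\chi)=\sum_{k\geq 0}c_k(\chi)t^k$ with $c_k(\chi)=\int_{\mathrm{ord}(f_l(x))=k}\Phi(x)\chi(\mathrm{ac}\,f_l(x))\,|\gamma_{GL}|$, the $\chi\neq\chi_{\text{triv}}$ terms collapse to $\sum_{\chi\neq\chi_{\text{triv}}}g_{\chi^{-1}}\chi(u)\,c_{m-c(\chi)}(\chi)$, which is the third summand. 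For $\chi_{\text{triv}}$ the two remaining pieces combine: the tail sums $\sum_{k\geq m}c_k$ and $\sum_{k\geq m-1}c_k$ with coefficients $-q/(q-1)$ and $1$ respectively; using $Z_\Phi(s,\chi_{\text{triv}})$ as the generating function, $\sum_{k\geq N}c_k$ is read off as a coefficient of $Z_\Phi(s,\chi_{\text{triv}})/(1-t)$, and a short manipulation with the weights $-q/(q-1)$ and $1$ yields exactly $\mathrm{Coeff}_{t^{m-1}}\frac{(t-q)Z_\Phi(s,\chi_{\text{triv}})}{(q-1)(1-t)}$. Here I need that these tails are genuinely finite sums in the relevant regime, or at least that the generating-function identities make sense as rational functions — which follows from Theorem \ref{theorem1} giving rationality of $Z_\Phi$ in $t$.

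The point where I would need to be careful is the separation of the $f_l=0$ locus: the decomposition of $\Psi(zy)$ above is only valid for $y\in K^\times$, so one must first split $V^{(l-1)}(K)=V^{(l)}(K)\sqcup(V^{(l-1)}(K)\setminus V^{(l)}(K))$ and argue that the restriction $Z_\Phi^{(l)}(s,\chi)$ of the zeta function to the smaller variety $V^{(l)}(K)$ is well-behaved — in particular that $Z_\Phi^{(l)}(0,\chi_{\text{triv}})$ makes sense as the contribution of that locus. This is where the hypothesis that $\bigwedge_{i=1}^l df_i$ does not vanish on $V^{(l,\lambda)}$ for $\lambda\neq 0$ enters: via the Claim in the proof of Theorem \ref{theorem2} it guarantees $\nabla h\neq 0$ off the zero locus of $h$, so in the Igusa-coordinate reduction of Proposition \ref{proposition0} the zeta function $Z_\Phi(s,\chi,V^{(l-1)},f_l)$ is a finite sum of classical Igusa integrals of the form $\int \Theta(y)\chi(\mathrm{ac}\,h(y))|h(y)|^s_K|dy|$ with $h$ having nonvanishing gradient; for such integrals the stationary-phase expansion and the precise matching of Laurent coefficients to exponential-sum values is exactly Igusa's theorem (\cite[Theorem 8.4.1, §8.5]{I2}). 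So the real work is bookkeeping: tracking the chart-by-chart contributions, confirming the $\chi_{\text{triv}}$ tail manipulation produces the stated rational expression, and checking that the $f_l=0$ part contributes only the single term $Z_\Phi^{(l)}(0,\chi_{\text{triv}})$ with no $m$-dependence. I expect the main obstacle to be purely notational — keeping the restricted zeta function $Z_\Phi^{(l)}$ and the Gel'fand–Leray measure consistent across the strict-transform charts of Theorem \ref{proposition1} — rather than any genuine analytic difficulty, since everything reduces to the smooth ($\nabla h\neq 0$) Igusa case.
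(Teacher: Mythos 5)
The paper's own proof is a one-line citation to Denef's Proposition 1.4.4, so the right comparison is to Denef's argument. Your strategy is the correct one (the same as Denef's): decompose $\Psi(u\pi^{-m}y)$ via Fourier analysis on $R_K^{\times}/(1+P_K^{j})$, substitute into $E_\Phi$, and read off Laurent coefficients of $Z_\Phi(s,\chi)$. However, the key identity you wrote down has the wrong coefficients, and this is not a cosmetic slip — the subsequent ``short manipulation'' you describe does not produce the stated expression from your coefficients.

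To see the error, test your displayed formula at a point $y$ with $\operatorname{ord}(y)\geq m$: then $u\pi^{-m}y\in R_K$, so the left side is $\Psi(u\pi^{-m}y)=1$, but your right side gives $\tfrac{-q}{q-1}+1+0=\tfrac{-1}{q-1}$. The correct trivial-character weights are $\tfrac{q}{q-1}$ on $\mathbf{1}[\operatorname{ord}(y)\geq m]$ and $-\tfrac{1}{q-1}$ on $\mathbf{1}[\operatorname{ord}(y)\geq m-1]$: at $\operatorname{ord}(y)\geq m$ these give $1$, and at $\operatorname{ord}(y)=m-1$ they give $-\tfrac{1}{q-1}$, which is indeed the trivial-character projection of $\Psi(u\pi^{-1}\cdot)$ on $(R_K/P_K)^{\times}$ (since $\sum_{v\in(R_K/P_K)^{\times}}\Psi(u\pi^{-1}v)=-1$). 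With these corrected weights the bookkeeping does produce $Z_\Phi(0,\chi_{\text{triv}})+\text{Coeff}_{t^{m-1}}\frac{(t-q)Z_\Phi(s,\chi_{\text{triv}})}{(q-1)(1-t)}$; with yours the constant in front of $Z_\Phi(0,\chi_{\text{triv}})$ comes out as $\tfrac{-1}{q-1}$ instead of $1$, and the rational factor is $\tfrac{t(q-1)-q}{(q-1)(1-t)}$ rather than $\tfrac{t-q}{(q-1)(1-t)}$.

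A second issue is your interpretation of the leading term as ``the contribution of the $f_l=0$ locus.'' Under the hypotheses, $V^{(l)}(K)$ has codimension one in $V^{(l-1)}(K)$ and so has zero Gel'fand--Leray measure; the set $\{f_l=0\}$ contributes nothing directly to the integral $E_\Phi$. The term written $Z_\Phi^{(l)}(0,\chi_{\text{triv}})$ (which should be read as $Z_\Phi(0,\chi_{\text{triv}})$, i.e. the value at $s=0$ equal to $\int_{V^{(l-1)}\setminus V^{(l)}}\Phi\,|\gamma_{GL}|$) appears purely from rewriting the trivial-character tail $\sum_{k\geq m}I(k)$ as $Z_\Phi(0,\chi_{\text{triv}})-\sum_{k<m}I(k)$. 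So the mechanism producing this term is algebraic, not geometric, and your explanation of why it ``makes sense'' attributes it to the wrong source. Your remarks about the role of the hypothesis $\bigwedge_{i=1}^{l}df_i\neq 0$ on $V^{(l,\lambda)}$ (via Theorem \ref{theorem2}, to truncate the sum over $\chi$ to finitely many conductors) are correct and are the right place to use that hypothesis.
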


\begin{proof}
The proof uses the same reasoning as the one given by Denef for Proposition
1.4.4 in \cite{D0}.
\end{proof}

\begin{remark}
\label{remark9}The conclusion in Proposition \ref{proposition3} holds, if the
condition \textquotedblleft the hypotheses of Theorem \ref{theorem1}%
\textquotedblright\ is replaced by \textquotedblleft the hypotheses of Remark
\ref{remark5}.\textquotedblright
\end{remark}

\begin{theorem}
\label{theorem3}Let $\boldsymbol{f}=(f_{1},\ldots,f_{l}):U\rightarrow K^{l}$,
$\boldsymbol{f}\left(  0\right)  =0$, $2\leq l\leq n$, be an analytic mapping
defined \ on a neighborhood $U\subseteq K^{n}$ of the origin. Assume that
$V^{(l)}\left(  K\right)  $ and $V^{(l-1)}\left(  K\right)  $ are germs of
convenient and non-degenerate complete intersection varieties, and that
$V^{(l-1)}\left(  K\right)  $ is a closed submanifold of $U$. Fix a rational
simple polyhedral subdivision $\Sigma^{\ast}=\Sigma^{\ast}\left(
\boldsymbol{f}\right)  $ of $\mathbb{R}_{+}^{n}$ subordinate to $\Gamma\left(
\boldsymbol{f}\right)  $. Assume that $U$ is sufficiently small and $\Phi$ is
a Bruhat-Schwartz function whose support is contained in $U$. Assume that the
$l-$form $%
%TCIMACRO{\tbigwedge \nolimits_{i=1}^{l}}%
%BeginExpansion
{\textstyle\bigwedge\nolimits_{i=1}^{l}}
%EndExpansion
df_{i}$ does not vanish on $V^{(l,\lambda)}\left(  K\right)  $, for any
$\lambda\in K^{\times}$. Then

\noindent(1) for $\left\vert z\right\vert _{K}$ big enough $E_{\Phi}(z)$ is a
finite $\mathbb{C}-$linear combination of the functions of the form
$\chi\left(  ac\text{ }z\right)  \left\vert z\right\vert _{K}^{\lambda}\left(
\log_{q}\left\vert z\right\vert _{K}\right)  ^{j_{\lambda}}$ with coefficients
independent of $z$, and $\lambda\in\mathbb{C}$ a pole of $\left(
1-q^{-s-1}\right)  Z_{\Phi}(s,\chi_{\text{triv}})$ or of $Z_{\Phi}(s,\chi)$,
$\chi\neq\chi_{\text{triv}}$, and with $j_{\lambda}\leq$(multiplicity of pole
$\lambda$ $-1$). Moreover all the poles $\lambda$\ appear effectively in this
linear combination;

\noindent(2) if $\gamma_{\boldsymbol{f}}>-1$, then $\left\vert E_{\Phi
}(z)\right\vert \leq C\left(  K\right)  \left\vert z\right\vert _{K}%
^{\gamma_{\boldsymbol{f}}}\left(  \log_{q}\left\vert z\right\vert _{K}\right)
^{n-l}$, for $\left\vert z\right\vert _{K}$ big enough, where \ $C\left(
K\right)  $\ is a positive constant.
\end{theorem}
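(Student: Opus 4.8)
The plan is to deduce Theorem \ref{theorem3} from Proposition \ref{proposition3} together with Theorem \ref{theorem1}, following the standard Igusa-type argument that converts the Mellin-type identity for $E_{\Phi}(z)$ into an asymptotic expansion in $|z|_K$. First I would fix $z = u\pi^{-m}$ with $u\in R_K^\times$ and $m\in\mathbb{Z}$, and recall from Proposition \ref{proposition3} that $E_{\Phi}(u\pi^{-m})$ is the sum of a constant term, the coefficient of $t^{m-1}$ in $\frac{(t-q)Z_{\Phi}(s,\chi_{\mathrm{triv}})}{(q-1)(1-t)}$, and a finite sum over nontrivial characters $\chi$ of $g_{\chi^{-1}}\chi(u)\,\mathrm{Coeff}_{t^{m-c(\chi)}}Z_{\Phi}^{(l)}(s,\chi)$. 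By Theorem \ref{theorem2} (and Remark \ref{remark6}), only finitely many $\chi$ contribute, since $Z_{\Phi}(s,\chi)$ vanishes identically once $c(\chi) > e(\Phi)$; here the hypothesis that $\bigwedge_{i=1}^l df_i$ does not vanish on $V^{(l,\lambda)}$ is exactly what licenses the use of Theorem \ref{theorem2}.

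The core of part (1) is then the elementary fact about rational functions of $t=q^{-s}$: if $R(t)\in\mathbb{Q}(t)$ has poles only at points $t_0$ with $t_0 = q^{\lambda + 2\pi\sqrt{-1}k/\log q}$ (equivalently at $s=\lambda$ modulo $\frac{2\pi\sqrt{-1}}{\log q}\mathbb{Z}$), then its Taylor coefficients $c_k$ satisfy an asymptotic $c_k = \sum_\lambda P_\lambda(k)\, q^{-\lambda k}$ where $P_\lambda$ is a polynomial of degree at most (order of the pole at $\lambda$) $-1$; this is a partial-fractions computation. Applying this to $R(t) = \frac{(t-q)Z_{\Phi}(s,\chi_{\mathrm{triv}})}{(q-1)(1-t)}$ and to each $R(t) = Z_{\Phi}^{(l)}(s,\chi)$, and using $|z|_K = q^m$, $\log_q|z|_K = m$, converts the coefficient extractions into linear combinations of $\chi(\mathrm{ac}\,z)|z|_K^{\lambda}(\log_q|z|_K)^{j_\lambda}$. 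The factor $t-q$ in the numerator of the $\chi_{\mathrm{triv}}$ term is what forces the poles to be those of $(1-q^{-s-1})Z_{\Phi}(s,\chi_{\mathrm{triv}})$ rather than of $Z_{\Phi}(s,\chi_{\mathrm{triv}})$ itself (the factor $\frac{1}{1-t}$ introduces a possible pole at $s=0$, but the constant term absorbs or matches this, and the $t-q$ kills the pole at $t=q$, i.e.\ $s=-1$, coming from $Z_{\Phi}$); I would track these factors carefully to produce the precise pole set claimed. The assertion that all poles $\lambda$ appear effectively requires the non-cancellation argument: one shows that the residue data at each such $\lambda$ is nonzero for a suitable choice within the linear combination, using the positivity/genuineness of the integrals as in Denef's treatment (cf.\ the remark invoking \cite[Lemma 2.6]{V-Z} that $Z_{\phi}(s,V^{(l-1)},f_l)$ has at least one pole).

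For part (2), I would invoke Theorem \ref{theorem1}: all poles $\lambda$ of $Z_{\Phi}(s,\chi)$ lie in $\bigcup_{a\succ 0}\mathcal{P}(a)\cup\{-1 + \frac{2\pi\sqrt{-1}k}{\log q}\}$, so their real parts are among the numbers $-\bigl(\sigma(a)-\sum_{j=1}^{l-1}d(a,\Gamma_j)\bigr)/d(a,\Gamma_l)$ and $-1$. By definition $\gamma_{\boldsymbol{f}}$ is the maximum of the former quantities over admissible $a$, and the hypothesis $\gamma_{\boldsymbol{f}}>-1$ means $\gamma_{\boldsymbol{f}}$ dominates $-1$ as well, hence $\gamma_{\boldsymbol{f}}\geq\beta_{\boldsymbol{f}}$ and the maximal pole order with real part $\beta_{\boldsymbol{f}}$ is at most $n-l+1$ by Remark \ref{remark5A}. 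Plugging $\beta_{\boldsymbol{f}}\leq\gamma_{\boldsymbol{f}}$ and $j_{\boldsymbol{f}}\leq n-l+1$ into the expansion from part (1), and bounding the geometric-type sum $\sum_{k} k^{j}q^{-\beta k}$ term by term, yields $|E_{\Phi}(z)|\leq C(K)\,|z|_K^{\gamma_{\boldsymbol{f}}}(\log_q|z|_K)^{n-l}$ for $|z|_K$ large; the exponent $n-l$ rather than $n-l+1$ comes from the fact that the contribution of maximal order $n-l+1$ arises only from the $\chi_{\mathrm{triv}}$ term through the $\frac{1}{1-t}$ factor at $s=0$, whose real part $0$ is strictly below $\gamma_{\boldsymbol{f}}$ unless one is careful — so I would separate the $\chi_{\mathrm{triv}}$ analysis and check that at real part exactly $\gamma_{\boldsymbol{f}}$ the pole order is at most $n-l+1$, giving log-power $n-l$.

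\textbf{Main obstacle.} The delicate point is part (1)'s ``all poles appear effectively'' claim: one must rule out cancellation across the different $\chi$-summands and between the two $\chi_{\mathrm{triv}}$ contributions. I expect to handle this exactly as Denef does — by testing against a well-chosen $\Phi$ supported near a point of $\widetilde{V}^{(l-1)}$ realizing a given candidate pole, where the local computation in Case I / Case II of the proof of Theorem \ref{theorem1} exhibits a nonzero residue — but making this uniform over all candidate poles simultaneously, and correctly bookkeeping the interplay of the numerator factor $t-q$ with the pole at $s=-1$, is the step that needs the most care.
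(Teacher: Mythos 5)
Your proposal takes essentially the same route as the paper, which proves part (1) from Theorems \ref{theorem1}, \ref{theorem2} and Proposition \ref{proposition3} by expanding $Z_{\Phi}(s,\chi)$ in partial fractions, and part (2) from part (1) together with Remark \ref{remark5A}. One small correction to your perceived ``main obstacle'': for a \emph{fixed} $\Phi$ the effectivity claim is automatic, because every pole of the relevant rational function in $t=q^{-s}$ contributes a nonzero principal part, and the functions $\chi(\mathrm{ac}\,z)\,|z|_{K}^{\lambda}(\log_{q}|z|_{K})^{j}$ are linearly independent in $z$ (so there can be no cancellation across different $\chi$ nor across distinct $\lambda$); the test-function argument you sketch, choosing $\Phi$ supported near a point realizing a candidate pole, is relevant to the \emph{different} question of whether the candidate poles listed in Theorem \ref{theorem1} are actual poles of $Z_{\cdot}(s,\chi)$, not to the claim being made here.
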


\begin{proof}
(1) The result follows from Theorems \ref{theorem1}, \ref{theorem2}\ and
Proposition \ref{proposition3} by writing $Z_{\Phi}(s,\chi)$ in partial
fractions. (2) The estimation follows from the first part and Remark
\ref{remark5A}.
\end{proof}

\begin{remark}
\label{remark10}The conclusion in Theorem \ref{theorem3} holds, if the
condition \textquotedblleft the hypotheses of Theorem \ref{theorem1}%
\textquotedblright\ is replaced by \textquotedblleft the hypotheses of Remark
\ref{remark5}.\textquotedblright
\end{remark}

\section{\label{Congruences}Congruences and Exponential Sums Along Smooth
Algebraic Varieties}

For any polynomial $g$ over $R_{K}$ we denote by $\overline{g}$ \ the
polynomial over $\overline{K}$ obtained by reducing each coefficient of $g$
modulo $P_{K}$.

Assume that $f_{i}\left(  x\right)  \in R_{K}\left[  x_{1},\ldots
,x_{n}\right]  $, $f_{i}\left(  0\right)  =0$, for $i=1,\ldots,l$, with $2\leq
l\leq n$, and that $V^{(l-1)}\left(  K\right)  =\left\{  x\in K^{n}\mid
f_{i}\left(  x\right)  =0,\text{ }i=1,\ldots,l-1\right\}  $ is a closed
submanifold of dimension $n-l+1$. We will say that $V^{(l-1)}\left(  K\right)
$ is \textit{a} \textit{smooth }$K$\textit{-algebraic variety of dimension
}$n-l+1$, following the convention \ introduced in Remark \ref{Remark1}.

Since $R_{K}^{n}$ is compact,
\[
V^{(l-1)}\left(  R_{K}\right)  :=V^{(l-1)}\left(  K\right)  \cap R_{K}%
^{n}=\left\{  x\in R_{K}^{n}\mid f_{i}\left(  x\right)  =0,\text{ }%
i=1,\ldots,l-1\right\}
\]
is a compact submanifold of dimension $n-l+1$.

Let $\operatorname{mod}$ $P_{K}^{m}$ denote the canonical homomorphism
$R_{K}^{n}\rightarrow\left(  R_{K}/P_{K}^{m}\right)  ^{n}$, for $m,n\in
\mathbb{N\smallsetminus}\left\{  0\right\}  $. We will call the image of
$A\subseteq R_{K}^{n}$ by $\operatorname{mod}$ $P_{K}^{m}$, \textit{the
reduction }$\operatorname{mod}$\textit{\ }$P_{K}^{m}$\textit{\ of }$A$, and it
will be denoted as $A$ $\operatorname{mod}$ $P_{K}^{m}$.

We set
\[
V^{(l-1)}\left(  \overline{K}\right)  :=\left\{  \overline{z}\in\overline
{K}^{n}\mid\overline{f_{i}}\left(  \overline{z}\right)  =0,\text{ }%
i=1,\ldots,l-1\right\}  .
\]

We will say that $V^{(l-1)}\left(  K\right)  $ \textit{has good reduction}
$\operatorname{mod}$ $P_{K}$ if $rank_{\overline{K}}\left[  \overline
{\frac{\partial f_{i}}{\partial x_{j}}}\left(  \overline{z}\right)  \right]
=l-1$, for every $\overline{z}\in$ $V^{(l-1)}\left(  \overline{K}\right)  $.
In this case, we will say that $V^{(l-1)}\left(  K\right)  $ is \textit{a}
\textit{smooth }$K$\textit{-algebraic variety of dimension }$n-l+1$\textit{
with good reduction} $\operatorname{mod}$ $P_{K}$.

We also define for $m\in\mathbb{N\smallsetminus}\left\{  0\right\}  $,%
\[
V^{(l-1)}\left(  R_{K}/P_{K}^{m}\right)  =\left\{  \widetilde{x}\in\left(
R_{K}/P_{K}^{m}\right)  ^{n}\mid ord\left(  f_{i}\left(  \widetilde{x}\right)
\right)  \geq m,\text{ }i=1,\ldots,l-1\right\}  .
\]
We note that \textquotedblleft$ord\left(  f_{i}\left(  x\right)  \right)  \geq
m $\textquotedblright\ is independent of the representative chosen to compute
$ord\left(  f_{i}\left(  \widetilde{x}\right)  \right)  $.

\begin{definition}
(1) Let $f_{i}\left(  x\right)  \in R_{K}\left[  x_{1},\ldots,x_{n}\right]  $,
$f_{i}\left(  0\right)  =0$, for $i=1,\ldots,l$, with $2\leq l\leq n$. The
mapping $\boldsymbol{f}=(f_{1},\ldots,f_{l}):K^{n}\rightarrow K^{l}$, \ is
called strongly non-degenerate with respect to $\left(  \Gamma_{1}%
,\ldots,\Gamma_{l}\right)  $ (or simply strongly \textit{non-degenerate}) over
$\overline{K}$, if for every positive vector $a\in\mathbb{R}^{n}$, including
the origin, and any
\[
\overline{z}\in\left\{  \overline{z}\in\left(  \overline{K}^{\times}\right)
^{n}\mid\overline{f}_{1,a}(\overline{z})=\ldots=\overline{f}_{l,a}%
(\overline{z})=0\right\}  ,
\]
it satisfies that $rank_{\overline{K}}\left[  \overline{\frac{\partial
f_{i,_{a}}}{\partial x_{j}}}\left(  \overline{z}\right)  \right]  =l$.
Analogously we call $\boldsymbol{f}$\ strongly non-degenerate with respect to
$\left(  \Gamma_{1},\ldots,\Gamma_{l}\right)  $ at the origin (or simply
strongly \textit{non-degenerate at the origin}) over $\overline{K}$, if the
same condition is satisfied but only for $a$ strictly positive.

\noindent(2) Let $V^{\left(  j\right)  }\left(  K\right)  =V^{(j)}=\left\{
z\in K^{n}\mid f_{1}(z)=\ldots=f_{j}(z)=0\right\}  $, for $j=1,\ldots,l$, as
before. If the mapping $\boldsymbol{f}=\left(  f_{1},\ldots,f_{j}\right)  $
is\textit{\ strongly non-degenerate over }$\overline{K}$\textit{, we will say
that }$V^{(j)}$ is \textit{a } \textit{non-degenerate complete intersection
variety over }$\overline{K}$. If $\boldsymbol{f}=\left(  f_{1},\ldots
,f_{j}\right)  $\ is strongly non-degenerate at the origin over $\overline{K}%
$, we will say that $V^{(j)}$ is \textit{a } \textit{non-degenerate complete
intersection variety at the origin over }$\overline{K}$.
\end{definition}

We warn the reader that the main role of the word `strongly' in the previous
definition is to emphasize that we are working with a polynomial mapping and
that $U=K^{n}$.

For $a\in\mathbb{R}_{+}^{n}$ we set%
\[
V_{a}^{(j)}(R_{K}/P_{K}^{m}):=\left\{  x\in(R_{K}/P_{K}^{m})^{n}\mid
ord\left(  f_{i,a}(x)\right)  \geq m\text{, }i=1,\ldots,j\right\}  ,
\]
and%
\[
V_{a}^{(j)}(K):=\left\{  x\in K^{n}\mid f_{i,a}(x)=0\text{, }i=1,\ldots
,j\right\}  ,
\]
for $j=1,\ldots,l$. Analogously we define $V_{a}^{(j)}(\overline{K})$.

\subsection{Some Integrals Involving the Dirac Delta Function}

In this section, all the integrals involving the Dirac Delta function are
understood as defined in Gel'fand \ and Shilov's book \cite{G-S}, see also
Section \ref{DeltaSect}.

\begin{lemma}
\label{lemma1A}Let $f_{i}\left(  x\right)  \in R_{K}\left[  x_{1},\ldots
,x_{n}\right]  $, $f_{i}\left(  0\right)  =0$, for $i=1,\ldots,l$, with $2\leq
l\leq n$. Assume that $V^{(l)}\left(  K\right)  $ and $V^{(l-1)}\left(
K\right)  $ are non-degenerate complete intersection varieties over
$\overline{K}$. Let $x_{0}\in$ $\left(  R_{K}^{\times}\right)  ^{n}$ be a
given point, let $a\in\mathbb{R}_{+}^{n}$, and let $m\in\mathbb{N\setminus
}\left\{  0\right\}  $. We set
\[
I\left(  s,x_{0},m,a\right)  :=%
%TCIMACRO{\dint \limits_{x_{0}+\left(  P_{K}^{m}\right)  ^{n}}}%
%BeginExpansion
{\displaystyle\int\limits_{x_{0}+\left(  P_{K}^{m}\right)  ^{n}}}
%EndExpansion
\delta\left(  f_{1,a}\left(  x\right)  ,\ldots,f_{l-1,a}\left(  x\right)
\right)  \left\vert f_{l,a}\left(  x\right)  \right\vert _{K}^{s}\left\vert
dx\right\vert \text{, \ for }\operatorname{Re}(s)>0.
\]
Then $I\left(  s,x_{0},m,a\right)  $ equals
\[
\left\{
\begin{array}
[c]{lll}%
0 & \text{if\qquad} & \widetilde{x_{0}}\notin V_{a}^{(l-1)}\left(  R_{K}%
/P_{K}^{m}\right) \\
&  & \\
q^{-m\left(  n-l+1\right)  -sk} & \text{if\qquad} &
\begin{array}
[c]{l}%
\widetilde{x_{0}}\in V_{a}^{(l-1)}\left(  R_{K}/P_{K}^{m}\right)  \text{
and}\\
k:=ord\left(  f_{l,a}\left(  x_{0}\right)  \right)  <m
\end{array}
\\
&  & \\
q^{-m\left(  s+n-l+1\right)  }\left(  \frac{1-q^{-1}}{1-q^{-s-1}}\right)  &
\text{if\qquad} & \widetilde{x_{0}}\in V_{a}^{(l)}\left(  R_{K}/P_{K}%
^{m}\right)  ,
\end{array}
\right.
\]
where $\widetilde{x_{0}}$\ denotes the image of $x_{0}$\ in $R_{K}/P_{K}^{m}$.
\end{lemma}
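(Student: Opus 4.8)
The plan is to reduce the integral $I(s,x_0,m,a)$ to a single‑variable Igusa‑type integral by using the non‑degeneracy of $\boldsymbol{f}$ over $\overline{K}$ to produce a convenient local coordinate system, and then to compute explicitly. First I would observe that since $x_0 \in (R_K^\times)^n$, the face functions $f_{i,a}(x)$ agree (up to a monomial unit $x^{c_i}$ with $c_i$ depending on $a$) with themselves on the small polydisc $x_0 + (P_K^m)^n$; in fact, as $x^{c_i}$ is a unit there with $|x^{c_i}|_K = 1$ for $x \equiv x_0 \bmod P_K^m$ (choosing $m \geq 1$), we have $\mathrm{ord}(f_{i,a}(x)) = \mathrm{ord}(f_{i,a}(x)\,x^{-c_i})$ and $|f_{l,a}(x)|_K$ is unchanged under multiplication by $x^{c_l}$. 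So without loss of generality I may work directly with the $f_{i,a}$ themselves, and the first trichotomy branch ($\widetilde{x_0} \notin V_a^{(l-1)}(R_K/P_K^m)$) is immediate: some $\mathrm{ord}(f_{i,a}(x_0)) < m$ for $i \leq l-1$, hence $f_{i,a}(x) \neq 0$ throughout the polydisc (the order is constant there), so the Dirac delta factor forces the integrand to vanish identically.

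For the two remaining branches I would invoke the strong non‑degeneracy over $\overline{K}$ of the mapping $(f_{1,a},\ldots,f_{l,a})$ — or of $(f_{1,a},\ldots,f_{l-1,a})$ in the second branch — at the point $\widetilde{x_0} \in (\overline{K}^\times)^n$, which gives that the relevant Jacobian matrix $\big[\overline{\partial f_{i,a}/\partial x_j}(\widetilde{x_0})\big]$ has full rank $l$ (resp.\ $l-1$). By Hensel/the $K$‑analytic implicit function theorem this lifts to a $K$‑analytic coordinate change $y = \phi(x)$ on (a possibly shrunken version of) $x_0 + (P_K^m)^n$ with $y_i = f_{i,a}(x)$ for $i = 1,\ldots,l-1$ and, in the third branch, $y_l = f_{l,a}(x)$ as well, with Jacobian a unit of constant absolute value $1$ on the polydisc (since the reduction of the Jacobian is a unit in $\overline{K}$). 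I would then change variables: the image is again a product of discs, the $\delta$ in the first $l-1$ coordinates integrates out producing a factor $q^{-m(l-1)}$ cancelled against... more precisely it collapses those $l-1$ variables to $0$, leaving $q^{-m(n-l+1)}$ from the remaining "free" coordinates $y_{l+1},\ldots,y_n$ (each ranging over a disc of measure $q^{-m}$) times the one‑dimensional integral in $y_l$.

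In the second branch ($\widetilde{x_0}\in V_a^{(l-1)}$, $k := \mathrm{ord}(f_{l,a}(x_0)) < m$) that one‑dimensional integral is just $|y_l|_K^s$ with $y_l = f_{l,a}(x)$ restricted to the fibre; but on the fibre disc $y_l$ stays in the coset of order exactly $k$ (again because $k < m$ and the order is locally constant), so $|f_{l,a}(x)|_K^s = q^{-sk}$ is constant, yielding $q^{-m(n-l+1)-sk}$. In the third branch ($\widetilde{x_0}\in V_a^{(l)}$, so also $y_l$ vanishes mod $P_K^m$), the $y_l$‑integral becomes $\int_{P_K^m} |y_l|_K^s\,|dy_l| = q^{-m(s+1)}\frac{1-q^{-1}}{1-q^{-s-1}}$ by the standard geometric‑series computation, and multiplying by the $q^{-m(n-l+1)}$ from the free coordinates gives $q^{-m(s+n-l+1)}\frac{1-q^{-1}}{1-q^{-s-1}}$, as claimed. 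The main obstacle is the clean lifting step: one must be careful that the implicit‑function coordinate change can be arranged on the \emph{full} polydisc $x_0 + (P_K^m)^n$ (not merely on some smaller neighbourhood) with Jacobian of absolute value identically $1$ there — this is where the hypothesis that the reduction $\widetilde{x_0}$ lies on the smooth variety over $\overline{K}$, combined with the non‑degeneracy forcing the reduced Jacobian to be invertible, does the real work, and one should phrase it via Hensel's lemma applied to the coordinate functions rather than an abstract appeal to the implicit function theorem. The rest is the bookkeeping of measures under the unimodular change of variables.
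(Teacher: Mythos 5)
Your plan is essentially the paper's own proof: reduce to representatives on the lifted variety via Hensel, use the full‑rank condition on the reduced Jacobian of the face functions (from strong non‑degeneracy over $\overline{K}$) to produce a measure‑preserving change of variables $y_i = f_{i,a}(x)$ (plus coordinate completions), integrate out the Dirac delta, and evaluate the remaining one‑variable Igusa integral over $P_K^m$ by the geometric series. Two small remarks: your opening observation about $f_{i,a}$ ``agreeing with itself up to a monomial unit'' is a red herring (the lemma's integrand already involves the face functions $f_{i,a}$, so nothing needs to be converted), and in the third branch the free coordinates $y_{l+1},\ldots,y_n$ contribute $q^{-m(n-l)}$, not $q^{-m(n-l+1)}$, since the extra $q^{-m}$ is already absorbed into $\int_{P_K^m}|y_l|_K^s\,|dy_l|=q^{-m(s+1)}\frac{1-q^{-1}}{1-q^{-s-1}}$; your final stated answer is nevertheless correct.
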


\begin{proof}
Note that $\widetilde{x_{0}}\notin V_{a}^{(l-1)}\left(  R_{K}/P_{K}%
^{m}\right)  $ implies that $I\left(  s,x_{0},m,a\right)  =0$. We consider the
case $\widetilde{x_{0}}\in V_{a}^{(l-1)}\left(  R_{K}/P_{K}^{m}\right)  $ and
$k=ord\left(  f_{l,a}\left(  x_{0}\right)  \right)  <m$. By the Hensel lemma
we may assume $x_{0}\in V_{a}^{(l-1)}\left(  R_{K}\right)  $.Then $I\left(
s,x_{0},m,a\right)  $ can be expressed \ as%
\[
I\left(  s,x_{0},m,a\right)  =q^{-mn-sk}%
%TCIMACRO{\dint \limits_{R_{K}^{n}}}%
%BeginExpansion
{\displaystyle\int\limits_{R_{K}^{n}}}
%EndExpansion
\delta\left(  f_{1,a}\left(  x_{0}+\pi^{m}x\right)  ,\ldots,f_{l-1,a}\left(
x_{0}+\pi^{m}x\right)  \right)  \left\vert dx\right\vert .
\]
By reordering the $x_{i}$%
%TCIMACRO{\U{b4}}%
%BeginExpansion
\'{}%
%EndExpansion
s, \ and using the fact that $(f_{1},\ldots,f_{l-1})$ is strongly
non-degenerate over $\overline{K}$, we assume that $rank_{\overline{K}}\left[
\frac{\partial\overline{f_{i,a}}}{\partial x_{j}}\left(  \overline{z}\right)
\right]  =l-1$, for any $\overline{z}\in\left(  \overline{K}^{\times}\right)
^{n}$. We set $y=\left(  y_{1},\ldots,y_{n}\right)  =\phi\left(  x\right)  $
with
\[
y_{i}=\left\{
\begin{array}
[c]{ll}%
\frac{f_{i,a}\left(  x_{0}+\pi^{m}x\right)  }{\pi^{m}}, & i=1,\ldots,l-1\\
& \\
x_{i}, & i=l,\ldots,n.
\end{array}
\right.
\]
By using the implicit function theorem (see e.g. \cite[Lemma 7.4.3]{I2}), one
gets that $y=\phi\left(  x\right)  $ is measure-preserving bianalytic mapping
of $R_{K}^{n}$\ onto itself. By using $y=\phi\left(  x\right)  $ as change of
variables in $I\left(  s,x_{0},m,a\right)  $ one gets%
\[
I\left(  s,x_{0},m,a\right)  =q^{-mn-sk}%
%TCIMACRO{\dint \limits_{R_{K}^{n}}}%
%BeginExpansion
{\displaystyle\int\limits_{R_{K}^{n}}}
%EndExpansion
\delta\left(  \pi^{m}y_{1},\ldots,\pi^{m}y_{l-1}\right)  \left\vert
dy\right\vert =q^{-m\left(  n-l+1\right)  -sk}.
\]

Finally we consider $\widetilde{x_{0}}\in V_{a}^{(l-1)}\left(  R_{K}/P_{K}%
^{m}\right)  $ and $k=ord\left(  f_{l,a}\left(  x_{0}\right)  \right)  \geq
m$. This condition is equivalent to $\widetilde{x_{0}}\in V_{a}^{(l)}\left(
R_{K}/P_{K}^{m}\right)  $, and by the Hensel lemma we may assume $x_{0}\in
V_{a}^{(l)}\left(  R_{K}\right)  $. By using a reasoning similar to the
previously done, one gets that
\begin{align*}
I\left(  s,x_{0},m,a\right)   &  =q^{-mn}%
%TCIMACRO{\dint \limits_{R_{K}^{n}}}%
%BeginExpansion
{\displaystyle\int\limits_{R_{K}^{n}}}
%EndExpansion
\delta\left(  \pi^{m}y_{1},\ldots,\pi^{m}y_{l-1}\right)  \left\vert \pi
^{m}y_{l}\right\vert ^{s}\left\vert dy\right\vert \\
&  =q^{-mn-ms}\left(
%TCIMACRO{\dint \limits_{R_{K}^{l-1}}}%
%BeginExpansion
{\displaystyle\int\limits_{R_{K}^{l-1}}}
%EndExpansion
\delta\left(  \pi^{m}y_{1},\ldots,\pi^{m}y_{l-1}\right)  \left\vert
dy\right\vert \right)  \left(
%TCIMACRO{\dint \limits_{R_{K}}}%
%BeginExpansion
{\displaystyle\int\limits_{R_{K}}}
%EndExpansion
\left\vert y_{l}\right\vert ^{s}\left\vert dy_{l}\right\vert \right) \\
&  =q^{-m\left(  s+n-l+1\right)  }\left(  \frac{1-q^{-1}}{1-q^{-s-1}}\right)
.
\end{align*}

\end{proof}

\subsection{Polynomial Congruences over Submanifolds}

Along this section we will assume that $V^{(l-1)}\left(  K\right)  $ is a
smooth $K$-algebraic variety of dimension $n-l+1$ with good reduction
$\operatorname{mod}$ $P_{K}$. We set $N_{m}\left(  f_{l},V^{(l-1)}\right)
:=N_{m}$ as
\[
\left\{
\begin{array}
[c]{ll}%
card\left(  \left\{  \widetilde{x}\in V^{(l-1)}\left(  R_{K}\right)  \text{
}\operatorname{mod}P_{K}^{m}\mid ord\left(  f_{l}\left(  \widetilde{x}\right)
\right)  \geq m\right\}  \right)  , & \text{\ }m\geq1\\
& \\
1, & \text{\ }m=0.
\end{array}
\right.
\]

Since $V^{(l-1)}\left(  K\right)  $ has good reduction $\operatorname{mod}$
$P_{K}$, the Hensel lemma implies that
\[
V^{(l-1)}\left(  R_{K}\right)  \text{ }\operatorname{mod}P_{K}^{m}%
=V^{(l-1)}\left(  R_{K}/P_{K}^{m}\right)  ,
\]
and then
\begin{align*}
&  \left\{  \widetilde{x}\in V^{(l-1)}\left(  R_{K}\right)  \text{
}\operatorname{mod}P_{K}^{m}\mid ord\left(  f_{l}\left(  \widetilde{x}\right)
\right)  \geq m\right\} \\
& \\
&  =\left\{  \widetilde{x}\in\left(  R_{K}/P_{K}^{m}\right)  ^{n}\mid
f_{1}\left(  \widetilde{x}\right)  \equiv f_{2}\left(  \widetilde{x}\right)
\equiv\ldots\equiv f_{l}\left(  \widetilde{x}\right)  \equiv0\text{
}\operatorname{mod}P_{K}^{m}\right\}  .
\end{align*}
We associate to the sequence $\left(  N_{m}\right)  _{m\in\mathbb{N}}$ the
Poincar\'{e} series $P\left(  t,f_{l},V_{l-1}\right)  \allowbreak:=P\left(
t\right)  $ defined as%
\[
P\left(  t\right)  =%
%TCIMACRO{\dsum \limits_{m=0}^{\infty}}%
%BeginExpansion
{\displaystyle\sum\limits_{m=0}^{\infty}}
%EndExpansion
q^{-m\left(  n-l+1\right)  }N_{m}t^{m}.
\]
We also set
\[
Z\left(  s,V^{(l-1)},f_{l}\right)  :=\int\limits_{R_{K}^{n}}\delta\left(
f_{1}\left(  x\right)  ,\ldots,f_{l-1}\left(  x\right)  \right)  \left\vert
f_{l}(x)\right\vert _{K}^{s}\mid dx\mid.
\]

\begin{lemma}
\label{lemma2} If $t=q^{-s}$, then
\begin{equation}
P\left(  t\right)  =\frac{1-tZ\left(  s,V^{(l-1)},f_{l}\right)  }{1-t}.
\label{identity}%
\end{equation}

\end{lemma}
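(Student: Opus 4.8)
The plan is to mimic Igusa's classical derivation of the Poincar\'{e}-series identity for a single polynomial, carried out now on the compact submanifold $V^{(l-1)}(R_K)$ equipped with its Gel'fand-Leray measure. By the definition of the Dirac-delta integrals recalled in Section \ref{DeltaSect},
\[
Z(s,V^{(l-1)},f_{l})=\int\limits_{V^{(l-1)}(R_K)}\left\vert f_{l}(x)\right\vert_{K}^{s}\left\vert \gamma_{GL}(x)\right\vert .
\]
Write $\mu$ for the measure $\left\vert \gamma_{GL}\right\vert$ on $V^{(l-1)}(R_K)$, and for $m\geq 0$ put $A_{m}:=\mu\left(\left\{x\in V^{(l-1)}(R_K)\mid ord(f_{l}(x))\geq m\right\}\right)$. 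Since $\left\vert 0\right\vert_{K}^{s}=0$ for $\operatorname{Re}(s)>0$, splitting $V^{(l-1)}(R_K)$ according to the value of $ord(f_{l}(x))\in\mathbb{N}\cup\{+\infty\}$ gives, with $t=q^{-s}$,
\[
Z(s,V^{(l-1)},f_{l})=\sum_{k\geq 0}\left(A_{k}-A_{k+1}\right)t^{k}.
\]

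The core of the argument is the identity $A_{m}=q^{-m(n-l+1)}N_{m}$ for $m\geq 1$, together with $A_{0}=N_{0}=1$. Because $f_{1},\ldots,f_{l}\in R_K[x_{1},\ldots,x_{n}]$, the condition ``$ord(f_{l}(x))\geq m$'' depends only on $x\bmod P_K^{m}$; combined with the equality $V^{(l-1)}(R_K)\bmod P_K^{m}=V^{(l-1)}(R_K/P_K^{m})$ recorded above, this shows that $\left\{x\in V^{(l-1)}(R_K)\mid ord(f_{l}(x))\geq m\right\}$ is the full preimage, under reduction $\bmod\,P_K^{m}$, of the finite set counted by $N_{m}$. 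It therefore suffices to check that every fibre of the reduction map $V^{(l-1)}(R_K)\rightarrow V^{(l-1)}(R_K/P_K^{m})$ has $\mu$-measure $q^{-m(n-l+1)}$. This is where good reduction enters: given $b\in V^{(l-1)}(R_K)$, after renaming the $x_{j}$'s the $(l-1)\times(l-1)$ minor of $\left[\partial f_{i}/\partial x_{j}(b)\right]$ formed from the first $l-1$ variables is a unit, so by the $p$-adic implicit function theorem (see \cite[Lemma 7.4.3]{I2}) there is a measure-preserving bianalytic chart near $b$ in which $V^{(l-1)}$ is the graph $\left\{x_{i}=\varphi_{i}(x_{l},\ldots,x_{n}):i\leq l-1\right\}$ over a ball, and in which the relevant Jacobian is a unit; hence $\left\vert \gamma_{GL}\right\vert$ coincides with the Haar measure $\left\vert dx_{l}\cdots dx_{n}\right\vert$ on that graph, and the fibre through $b$ is a ball of radius $q^{-m}$ in dimension $n-l+1$, of $\mu$-measure $q^{-m(n-l+1)}$. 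Summing over the points of $V^{(l-1)}(R_K/P_K^{m})$ proves the claim.

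It remains to assemble the pieces. From $N_{0}=1$ and the measure identity, $P(t)=\sum_{m\geq 0}A_{m}t^{m}$, while the first part gives $tZ(s,V^{(l-1)},f_{l})=\sum_{m\geq 1}\left(A_{m-1}-A_{m}\right)t^{m}$; hence
\[
(1-t)P(t)=A_{0}+\sum_{m\geq 1}\left(A_{m}-A_{m-1}\right)t^{m}=A_{0}-tZ(s,V^{(l-1)},f_{l})=1-tZ(s,V^{(l-1)},f_{l}),
\]
which is (\ref{identity}). I expect the fibre-measure computation of the middle paragraph to be the only real obstacle, and within it the normalization $A_{0}=1$ (i.e., that $V^{(l-1)}(R_K)$ has total Gel'fand-Leray mass $1$); once that is granted, the rest is the same elementary bookkeeping as in Igusa's treatment of the one-polynomial Poincar\'{e} series.
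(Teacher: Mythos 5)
Your proof follows the paper's argument essentially step for step. The paper likewise decomposes $Z\left(s,V^{(l-1)},f_{l}\right)$ according to the level sets of $ord\left(f_{l}\right)$, reduces everything to its Claim~1, namely
\[
\int\limits_{\left\{x\in R_{K}^{n}\mid ord\left(f_{l}\left(x\right)\right)\geq m\right\}}\delta\left(f_{1}\left(x\right),\ldots,f_{l-1}\left(x\right)\right)\left\vert dx\right\vert=q^{-m\left(n-l+1\right)}N_{m},
\]
and proves that claim for $m\geq 1$ by precisely the change of variables $y_{i}=\left(f_{i}\left(x_{0}+\pi^{m}x\right)-f_{i}\left(x_{0}\right)\right)/\pi^{m}$ and the $p$-adic implicit function theorem, with good reduction supplying the unit Jacobian. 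Your reformulation via the measure $\mu=\left\vert\gamma_{GL}\right\vert$ on $V^{(l-1)}\left(R_{K}\right)$ is equivalent to the paper's delta-function phrasing (Lemma~\ref{lemma1}, Section~\ref{DeltaSect}), and the concluding generating-function bookkeeping is the same.

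The one step that genuinely needs more than you (or the paper) supply is the normalization you flag at the end, $A_{0}=\mu\left(V^{(l-1)}\left(R_{K}\right)\right)=1$. The change-of-variables argument yields a measure-preserving bianalytic self-map of $R_{K}^{n}$ only for $m\geq 1$ (the nonlinear terms of the map have coefficients in $P_{K}^{m}$); at $m=0$ the map need not be injective, so the argument does not give $A_{0}=1$. Under good reduction a Hensel count shows $\mu\left(V^{(l-1)}\left(R_{K}\right)\right)=q^{-\left(n-l+1\right)}\,card\left(V^{(l-1)}\left(\overline{K}\right)\right)$, which equals $1$ only when $card\left(V^{(l-1)}\left(\overline{K}\right)\right)=q^{n-l+1}$; this can fail under the stated hypotheses (e.g. $n=l=2$, $f_{1}=x_{1}\left(x_{1}-1\right)$, $p>2$). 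The paper's Claim~1 at $m=0$ tacitly asserts $A_{0}=N_{0}=1$ without justification, so the issue you isolate is present in the source as well; resolving it requires either an extra hypothesis forcing $card\left(V^{(l-1)}\left(\overline{K}\right)\right)=q^{n-l+1}$, or restating (\ref{identity}) with numerator $A_{0}-tZ\left(s,V^{(l-1)},f_{l}\right)$.
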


\begin{proof}
We first note that%
\[
Z\left(  s,V^{(l-1)},f_{l}\right)  =\int\limits_{R_{K}^{n}}\delta\left(
f_{1}\left(  x\right)  ,\ldots,f_{l-1}\left(  x\right)  \right)  \left\vert
f_{l}(x)\right\vert _{K}^{s}\mid dx\mid=
\]%
\begin{align*}
&
%TCIMACRO{\dsum \limits_{m=0}^{\infty}}%
%BeginExpansion
{\displaystyle\sum\limits_{m=0}^{\infty}}
%EndExpansion
q^{-ms}\left\{
%TCIMACRO{\dint \limits_{\left\{  x\in R_{K}^{n}\mid ord\left(  f_{l}\left(
%x\right)  \right)  \geq m\right\}  }}%
%BeginExpansion
{\displaystyle\int\limits_{\left\{  x\in R_{K}^{n}\mid ord\left(  f_{l}\left(
x\right)  \right)  \geq m\right\}  }}
%EndExpansion
\delta\left(  f_{1}\left(  x\right)  ,\ldots,f_{l-1}\left(  x\right)  \right)
\left\vert dx\right\vert \right\} \\
&  -%
%TCIMACRO{\dsum \limits_{m=0}^{\infty}}%
%BeginExpansion
{\displaystyle\sum\limits_{m=0}^{\infty}}
%EndExpansion
q^{-ms}\left\{
%TCIMACRO{\dint \limits_{\left\{  x\in R_{K}^{n}\mid\left(  ordf_{l}\left(
%x\right)  \right)  \geq m+1\right\}  }}%
%BeginExpansion
{\displaystyle\int\limits_{\left\{  x\in R_{K}^{n}\mid\left(  ordf_{l}\left(
x\right)  \right)  \geq m+1\right\}  }}
%EndExpansion
\delta\left(  f_{1}\left(  x\right)  ,\ldots,f_{l-1}\left(  x\right)  \right)
\left\vert dx\right\vert \right\}  .
\end{align*}
The result follows from the previous identity by using the following claim.

\textbf{Claim 1}%

\[%
%TCIMACRO{\dint \limits_{\left\{  x\in R_{K}^{n}\mid ord\left(  f_{l}\left(
%x\right)  \right)  \geq m\right\}  }}%
%BeginExpansion
{\displaystyle\int\limits_{\left\{  x\in R_{K}^{n}\mid ord\left(  f_{l}\left(
x\right)  \right)  \geq m\right\}  }}
%EndExpansion
\delta\left(  f_{1}\left(  x\right)  ,\ldots,f_{l-1}\left(  x\right)  \right)
\left\vert dx\right\vert =q^{-m\left(  n-l+1\right)  }N_{m}.
\]
The previous integral is equal to a finite sum of integrals of the form%

\[
I(x_{0},m):=%
%TCIMACRO{\dint \limits_{\left\{  x\in x_{0}+\left(  P_{K}^{m}\right)  ^{n}\mid
%ord\left(  f_{l}\left(  x\right)  \right)  \geq m\right\}  }}%
%BeginExpansion
{\displaystyle\int\limits_{\left\{  x\in x_{0}+\left(  P_{K}^{m}\right)
^{n}\mid ord\left(  f_{l}\left(  x\right)  \right)  \geq m\right\}  }}
%EndExpansion
\delta\left(  f_{1}\left(  x\right)  ,\ldots,f_{l-1}\left(  x\right)  \right)
\left\vert dx\right\vert
\]%
\[
=q^{-mn}%
%TCIMACRO{\dint \limits_{\left\{  x\in R_{K}^{n}\mid ord\left(  f_{l}\left(
%x_{0}+\pi^{m}x\right)  \right)  \geq m\right\}  }}%
%BeginExpansion
{\displaystyle\int\limits_{\left\{  x\in R_{K}^{n}\mid ord\left(  f_{l}\left(
x_{0}+\pi^{m}x\right)  \right)  \geq m\right\}  }}
%EndExpansion
\delta\left(  f_{1}\left(  x_{0}+\pi^{m}x\right)  ,\ldots,f_{l-1}\left(
x_{0}+\pi^{m}x\right)  \right)  \left\vert dx\right\vert ,
\]
where $x_{0}\in R_{K}^{n}$ runs through a fixed set of representatives \ of
$V^{(l-1)}\left(  R_{K}\right)  $ $\operatorname{mod}P_{K}^{m}$. We may assume
that $x_{0}\in V^{(l-1)}\left(  R_{K}\right)  $. Indeed, we can choose another
set of representatives of $V^{(l-1)}(R_{K})$ $\operatorname{mod}$ $P_{K}^{m}$
which are in $V^{(l-1)}(R_{K})$ because $V^{(l-1)}(R_{K})$ has good reduction
$\operatorname{mod}$ $P_{K}$.

Note that $I(x_{0},m)=0$ if $ord\left(  f_{l}\left(  x_{0}\right)  \right)
\allowbreak<m$, and if $ord\left(  f_{l}\left(  x_{0}\right)  \right)  \geq
m$,
\[
I(x_{0},m)=q^{-mn}%
%TCIMACRO{\dint \limits_{R_{K}^{n}}}%
%BeginExpansion
{\displaystyle\int\limits_{R_{K}^{n}}}
%EndExpansion
\delta\left(  f_{1}\left(  x_{0}+\pi^{m}x\right)  ,\ldots,f_{l-1}\left(
x_{0}+\pi^{m}x\right)  \right)  \left\vert dx\right\vert .
\]

By using the fact that $V^{(l-1)}\left(  K\right)  $ is a smooth $K$-algebraic
variety of dimension $n-l+1$ with good reduction $\operatorname{mod}$ $P_{K}$,
it follows from the implicit function theorem, possibly after reordering the
$x_{i}$'s, that $y=\phi\left(  x\right)  $, with
\[
y_{i}:=\left\{
\begin{array}
[c]{ll}%
\frac{f_{i}\left(  x_{0}+\pi^{m}x\right)  -f_{i}\left(  x_{0}\right)  }%
{\pi^{m}}, & i=1,\ldots,l-1\\
& \\
x_{i}, & i=l,\ldots,n,
\end{array}
\right.
\]
is measure-preserving bianalytic mapping of $R_{K}^{n}$\ onto itself.
Therefore
\[
I(x_{0},m)=q^{-mn}%
%TCIMACRO{\dint \limits_{R_{K}^{l-1}}}%
%BeginExpansion
{\displaystyle\int\limits_{R_{K}^{l-1}}}
%EndExpansion
\delta\left(  \pi^{m}y_{1}+f_{1}\left(  x_{0}\right)  ,\ldots,\pi^{m}%
y_{l-1}+f_{l-1}\left(  x_{0}\right)  \right)  \left\vert dy\right\vert .
\]

We now note that $I(x_{0},m)=0$ unless $f_{i}\left(  x_{0}\right)  \equiv0$
$\operatorname{mod}$ $\pi^{m}$, $i=1,\ldots,l-1$; in this case, a simple
change of variables shows that
\[
I(x_{0},m)=q^{-m\left(  n-l+1\right)  }%
%TCIMACRO{\dint \limits_{\left(  \pi^{m}R_{K}\right)  ^{l-1}}}%
%BeginExpansion
{\displaystyle\int\limits_{\left(  \pi^{m}R_{K}\right)  ^{l-1}}}
%EndExpansion
\delta\left(  z_{1},\ldots,z_{l-1}\right)  \left\vert dz\right\vert
=q^{-m\left(  n-l+1\right)  }.
\]
The Claim follows by observing that there are $N_{m}$ integrals of type
$I(x_{0},m)$ each of them equals $q^{-m\left(  n-l+1\right)  }$.
\end{proof}

\begin{theorem}
\label{theorem5} Assume that $V^{(l-1)}\left(  K\right)  $ is a smooth
$K$-algebraic variety of dimension $n-l+1$ with good reduction mod $P_{K}$,
and that $V^{(l)}\left(  K\right)  $ and $V^{(l-1)}\left(  K\right)  $ are
non-degenerate complete intersection varieties over $\overline{K}$. Fix a
rational simple polyhedral subdivision $\Sigma^{\ast}=\Sigma^{\ast}\left(
f_{1},\ldots,f_{l}\right)  $ of $\mathbb{R}_{+}^{n}$ subordinate to
$\Gamma\left(  f_{1},\ldots,f_{l}\right)  $. Then (1) $P\left(  t\right)  $ is
a rational function of $t$ with rational coefficients; (2) if $\gamma
_{\boldsymbol{f}}>-1$, then
\[
N_{m}\leq C\left(  K\right)  q^{m\left(  n-l+1+\gamma_{\boldsymbol{f}}\right)
}m^{n-l}.
\]

\end{theorem}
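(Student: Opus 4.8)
The plan is to derive both parts from Lemma~\ref{lemma2}, which expresses $P(t)=\dfrac{1-t\,Z\left(s,V^{(l-1)},f_{l}\right)}{1-t}$ with $t=q^{-s}$, so that everything hinges on understanding $Z\left(s,V^{(l-1)},f_{l}\right)=\int_{R_{K}^{n}}\delta\left(f_{1}(x),\ldots,f_{l-1}(x)\right)\left\vert f_{l}(x)\right\vert _{K}^{s}\mid dx\mid$ as a function of $t$. By Lemma~\ref{lemma1} this integral equals $Z_{\mathbf{1}_{R_{K}^{n}}}\left(s,\chi_{\text{triv}},V^{(l-1)},f_{l}\right)$, the local zeta function of Section~2 with $U=R_{K}^{n}$ and $\Phi$ the characteristic function of $R_{K}^{n}$; good reduction $\operatorname{mod}P_{K}$ guarantees that $V^{(l-1)}\left(R_{K}\right)$ is a closed submanifold of $R_{K}^{n}$, so Proposition~\ref{proposition0} applies and $Z\left(s,V^{(l-1)},f_{l}\right)$, hence $P(t)$, is a rational function of $t$. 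For (1) it then remains to note that the power-series coefficients of $P(t)$ are the numbers $q^{-m\left(n-l+1\right)}N_{m}\in\mathbb{Q}$, and a power series with rational coefficients representing a rational function over $\mathbb{C}$ is already rational over $\mathbb{Q}$ (the linear recurrences it satisfies are cut out by $\mathbb{Q}$-linear conditions).

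For (2) we need the location and orders of the poles of $P(t)$. First I would dispose of the apparent pole at $t=1$: the $m=0$ instance of Claim~1 in the proof of Lemma~\ref{lemma2} gives $\int_{R_{K}^{n}}\delta\left(f_{1}(x),\ldots,f_{l-1}(x)\right)\mid dx\mid=N_{0}=1$, and since $\left\vert f_{l}(x)\right\vert _{K}^{0}=1$ off the $\gamma_{GL}$-null set $V^{(l)}\left(R_{K}\right)\subset V^{(l-1)}\left(R_{K}\right)$, one gets $\lim_{s\to0^{+}}Z\left(s,V^{(l-1)},f_{l}\right)=1$; as the poles of $Z\left(s,V^{(l-1)},f_{l}\right)$ have negative real part (Proposition~\ref{proposition0}), $s=0$ is not a pole and this limit is its value there, i.e. $t\,Z=1$ at $t=1$. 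Thus the numerator $1-t\,Z$ of $P(t)$ vanishes at $t=1$, and the poles of $P(t)$ are exactly those of $Z\left(s,V^{(l-1)},f_{l}\right)$.

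Next, by Theorem~\ref{theorem1} in the polynomial form of Remark~\ref{remark5} (the non-degeneracy of $V^{(l)}(K)$ and $V^{(l-1)}(K)$ over $\overline{K}$ supplies, via reduction $\operatorname{mod}P_{K}$, the non-degeneracy needed to run the toric resolution of Theorem~\ref{proposition1}, and the face-function integrals arising on its charts are exactly those computed in Lemma~\ref{lemma1A}), every pole of $Z\left(s,V^{(l-1)},f_{l}\right)$ has real part in $\{-1\}\cup\{-(\sigma(a)-\sum_{j=1}^{l-1}d(a,\Gamma_{j}))/d(a,\Gamma_{l})\mid a\in\mathrm{Vert}(\Sigma^{\ast}),\ a\succ0\}$ and has order at most $n-l+1$ (this order bound is visible in the proof of Proposition~\ref{proposition0}/Theorem~\ref{theorem1}, where $Z$ becomes a finite sum of products of at most $n-l+1$ one-variable Igusa integrals, each contributing at most a simple pole). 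Since $\gamma_{\boldsymbol{f}}>-1$, Remark~\ref{remark5A} gives $\beta_{\boldsymbol{f}}\leq\gamma_{\boldsymbol{f}}$, so all those real parts are $\leq\gamma_{\boldsymbol{f}}$; equivalently, every pole $t_{0}$ of $P(t)$ satisfies $\left\vert t_{0}\right\vert \geq q^{-\gamma_{\boldsymbol{f}}}$ and has order $\leq n-l+1$.

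Finally, writing $P(t)$ in partial fractions as a polynomial plus a sum of terms $c_{ij}\left(1-t/t_{i}\right)^{-j}$ with $\left\vert t_{i}\right\vert \geq q^{-\gamma_{\boldsymbol{f}}}$ and $j\leq n-l+1$, and using $\left(1-t/t_{i}\right)^{-j}=\sum_{m\geq0}\binom{m+j-1}{j-1}t_{i}^{-m}t^{m}$, one obtains $\mathrm{Coeff}_{t^{m}}P(t)=O\!\left(q^{m\gamma_{\boldsymbol{f}}}\,m^{n-l}\right)$ as $m\to\infty$; since $\mathrm{Coeff}_{t^{m}}P(t)=q^{-m\left(n-l+1\right)}N_{m}$, this gives $N_{m}\leq C(K)\,q^{m\left(n-l+1+\gamma_{\boldsymbol{f}}\right)}m^{n-l}$, the constant enlarged to cover the finitely many small $m$. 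The step I expect to be most delicate is the third one: verifying that the reduction-$\operatorname{mod}P_{K}$ non-degeneracy hypothesis is precisely what lets one carry the resolution of Theorem~\ref{proposition1} out over $R_{K}$ and apply Lemma~\ref{lemma1A} chart by chart to read off the pole list together with the order bound $n-l+1$; once that is done, the cancellation at $t=1$ and the coefficient extraction are routine.
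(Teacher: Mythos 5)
Your argument takes essentially the same route as the paper's proof: start from the identity of Lemma~\ref{lemma2}, invoke Theorem~\ref{theorem1}/Remark~\ref{remark5} (or, as you note, Proposition~\ref{proposition0}) for rationality of $Z\left(s,V^{(l-1)},f_{l}\right)$, and combine a partial-fraction expansion with the pole bounds of Remark~\ref{remark5A} for the estimate on $N_{m}$. You usefully fill in details the paper leaves implicit --- the cancellation of the apparent pole at $t=1$, the rationality of the power-series coefficients, and the uniform order bound $n-l+1$ on all poles --- but these are elaborations of the same argument rather than a different approach.
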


\begin{proof}
The first part follows from the rationality of $Z\left(  s,V^{(l-1)}%
,f_{l}\right)  $ (cf. Theorem \ref{theorem1} and Remark \ref{remark5}) by
\ (\ref{identity}).The \ second part follows by expanding in simple fractions
the left side of (\ref{identity}) and using Remark \ref{remark5A}.
\end{proof}

\subsection{Exponential Sums Along Smooth Algebraic Varieties}

\begin{lemma}
\label{lemma4}Let $f_{i}\left(  x\right)  \in R_{K}\left[  x_{1},\ldots
,x_{n}\right]  $, $f_{i}\left(  0\right)  =0$, $i=1,\ldots,l$, with $2\leq
l\leq n$. Assume that $V^{(l-1)}\left(  K\right)  $ is a smooth algebraic
variety of dimension $n-l+1$ with good reduction $\operatorname{mod}$ $P_{K}$.
If $z=u\pi^{-m}$, $m\in\mathbb{N}$, $u\in R_{K}^{\times}$, then
\begin{align*}
E(z)  &  :=\int\limits_{R_{K}^{n}}\delta\left(  f_{1}\left(  x\right)
,\ldots,f_{l-1}\left(  x\right)  \right)  \Psi\left(  zf_{l}(x)\right)  \mid
dx\mid\\
&  =\int\limits_{V^{(l-1)}(R_{K})}\Psi\left(  zf_{l}\left(  x\right)  \right)
\mid\gamma_{GL}\left(  x\right)  \mid\\
&  =q^{-m\left(  n-l+1\right)  }%
%TCIMACRO{\dsum \limits_{y\in V^{(l-1)}\left(  R_{K}/P_{K}^{m}\right)  }}%
%BeginExpansion
{\displaystyle\sum\limits_{y\in V^{(l-1)}\left(  R_{K}/P_{K}^{m}\right)  }}
%EndExpansion
\Psi\left(  zf_{l}(y)\right)  .
\end{align*}

\end{lemma}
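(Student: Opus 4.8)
The plan is to prove the three expressions are equal by handling the two equalities separately. The second equality — that the delta-function integral over $R_K^n$ equals the Gel'fand-Leray integral over $V^{(l-1)}(R_K)$ — is exactly the content of Remark \ref{osci_int} applied with $\Phi$ the characteristic function of $R_K^n$ and with the oscillatory factor $\Psi(zf_l(x))$; indeed $\Phi(x)\Psi(zf_l(x))$ is a Bruhat-Schwartz function, so the identity there gives precisely $\int_{R_K^n}\delta(f_1,\ldots,f_{l-1})\Psi(zf_l)\,|dx| = \int_{V^{(l-1)}(R_K)}\Psi(zf_l)\,|\gamma_{GL}|$. So the real work is the third equality, expressing the integral as a finite exponential sum mod $P_K^m$.

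For the third equality I would partition $R_K^n$ into balls $x_0 + (P_K^m)^n$ indexed by a set of representatives $x_0$ of $(R_K/P_K^m)^n$, and show that on each ball the contribution is $q^{-mn}\Psi(zf_l(x_0))$ times an integral of a delta function which evaluates to $q^{m(l-1)}$ when $\widetilde{x_0}\in V^{(l-1)}(R_K/P_K^m)$ and to $0$ otherwise. The key point: since $V^{(l-1)}(K)$ has good reduction mod $P_K$, Hensel's lemma lets me take the representatives $x_0$ inside $V^{(l-1)}(R_K)$ whenever $\widetilde{x_0}$ lies in $V^{(l-1)}(R_K/P_K^m)$, and then exactly as in the proof of Lemma \ref{lemma2} (Claim 1) the change of variables $y_i = (f_i(x_0+\pi^m x) - f_i(x_0))/\pi^m$ for $i=1,\dots,l-1$, $y_i = x_i$ for $i=l,\dots,n$, furnished by the implicit function theorem (\cite[Lemma 7.4.3]{I2}), is measure-preserving bianalytic on $R_K^n$, reducing $\int_{R_K^n}\delta(f_1(x_0+\pi^m x),\dots,f_{l-1}(x_0+\pi^m x))\,|dx|$ to $\int_{R_K^{l-1}}\delta(\pi^m y_1,\dots,\pi^m y_{l-1})\,|dy| = q^{m(l-1)}$. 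I also need that $f_l(x)$ is constant mod $P_K^m$ on the ball $x_0+(P_K^m)^n$ — this holds because $f_l\in R_K[x_1,\dots,x_n]$ with $f_l(0)=0$, so $f_l(x_0+\pi^m x)\equiv f_l(x_0)\pmod{\pi^m}$, hence $zf_l(x_0+\pi^m x) = u\pi^{-m}f_l(x_0+\pi^m x)$ differs from $u\pi^{-m}f_l(x_0)$ by an element of $R_K$, on which $\Psi$ is trivial; thus $\Psi(zf_l(x))=\Psi(zf_l(x_0))$ throughout the ball.

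Summing over representatives, the balls with $\widetilde{x_0}\in V^{(l-1)}(R_K/P_K^m)$ each contribute $q^{-mn}\cdot q^{m(l-1)}\cdot\Psi(zf_l(x_0)) = q^{-m(n-l+1)}\Psi(zf_l(x_0))$, and the rest contribute zero, giving $q^{-m(n-l+1)}\sum_{y\in V^{(l-1)}(R_K/P_K^m)}\Psi(zf_l(y))$, where I use good reduction once more in the form $V^{(l-1)}(R_K)\bmod P_K^m = V^{(l-1)}(R_K/P_K^m)$ to identify the indexing set. The main obstacle is bookkeeping rather than conceptual: I must be careful that the implicit-function-theorem change of variables is valid — it requires the relevant $(l-1)\times(l-1)$ minor of $[\partial f_i/\partial x_j]$ to be a unit mod $P_K$ at $x_0$, which is guaranteed precisely by the good-reduction hypothesis $rank_{\overline K}[\overline{\partial f_i/\partial x_j}] = l-1$ on $V^{(l-1)}(\overline K)$ — and that the normalization of Haar measure (volume of $R_K^n$ equal to one) is tracked consistently through all the scalings by powers of $q$.
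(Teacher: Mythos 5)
Your proposal is correct and follows essentially the same route as the paper: Remark \ref{osci_int} for the equality between the delta-function integral and the Gel'fand--Leray integral, and then a ball-by-ball reduction of $\int_{x_0+(P_K^m)^n}\delta(f_1,\ldots,f_{l-1})\,|dx|$ to $q^{-m(n-l+1)}$ or $0$, mirroring Claim 1 in the proof of Lemma \ref{lemma2}. The only difference is that you spell out explicitly the constancy of $\Psi(zf_l(x))$ on each ball $x_0+(P_K^m)^n$, which the paper's proof leaves implicit.
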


\begin{proof}
The lemma follows from Remark \ref{osci_int} and the following identity:%
\[%
%TCIMACRO{\dint \limits_{x_{0}+\left(  P_{K}^{m}\right)  ^{n}}}%
%BeginExpansion
{\displaystyle\int\limits_{x_{0}+\left(  P_{K}^{m}\right)  ^{n}}}
%EndExpansion
\delta\left(  f_{1}\left(  x\right)  ,\ldots,f_{l-1}\left(  x\right)  \right)
\left\vert dx\right\vert =
\]%
\[
\left\{
\begin{array}
[c]{ll}%
q^{-m\left(  n-l+1\right)  }\text{,} & \text{if }x_{0}\in V^{(l-1)}\left(
R_{K}\right)  \text{ mod }P_{K}^{m}\\
& \\
0\text{,} & \text{otherwise.}%
\end{array}
\right.
\]
The proof of this identity is very close to the proof of Claim 1 in the proof
of Lemma \ref{lemma2}.
\end{proof}

\begin{theorem}
\label{theorem6}Let $f_{i}\left(  x\right)  \in R_{K}\left[  x_{1}%
,\ldots,x_{n}\right]  $, $f_{i}\left(  0\right)  =0$, for $i=1,\ldots,l$, with
$2\leq l\leq n$. Assume that $V^{(l-1)}\left(  K\right)  $ is a smooth
algebraic variety of dimension $n-l+1$ with good reduction mod $P_{K}$, and
that $V^{(l)}\left(  K\right)  $ and$\ V^{(l-1)}\left(  K\right)  $ are
non-degenerate complete intersection varieties. Fix a rational simple
polyhedral subdivision $\Sigma^{\ast}=\Sigma^{\ast}\left(  f_{1},\ldots
,f_{l}\right)  $ of $\mathbb{R}_{+}^{n}$ subordinate to $\Gamma\left(
\boldsymbol{f}\right)  $. Assume that the $l-$form $%
%TCIMACRO{\tbigwedge \nolimits_{i=1}^{l}}%
%BeginExpansion
{\textstyle\bigwedge\nolimits_{i=1}^{l}}
%EndExpansion
df_{i}$ does not vanish on $V^{(l,\lambda)}\left(  K\right)  $, for any
$\lambda\in K^{\times}$. If $\gamma_{\boldsymbol{f}}>-1$, then
\[
\left\vert E(z)\right\vert \leq C\left(  K\right)  \left\vert z\right\vert
_{K}^{\gamma_{\boldsymbol{f}}}\left(  \log_{q}\left\vert z\right\vert
_{K}\right)  ^{n-l},
\]
for $\left\vert z\right\vert _{K}$ big enough, where \ is $C\left(  K\right)
$\ a positive constant.
\end{theorem}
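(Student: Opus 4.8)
The plan is to recognize $E(z)$ as a special value of the oscillatory integral studied in Theorem \ref{theorem3} and then invoke the estimate proved there. Take $\Phi$ to be the characteristic function $\mathbf{1}_{R_K^n}$ of $R_K^n$, which is a Bruhat--Schwartz function. By Lemma \ref{lemma4},
\[
E(z)=\int_{R_K^n}\delta\left(f_1(x),\ldots,f_{l-1}(x)\right)\Psi\left(zf_l(x)\right)\mid dx\mid=\int_{V^{(l-1)}(K)}\Phi(x)\,\Psi\left(zf_l(x)\right)\mid\gamma_{GL}(x)\mid=E_\Phi\left(z,V^{(l-1)},f_l\right),
\]
since the support of $\Phi$ cuts $V^{(l-1)}(K)$ down to $V^{(l-1)}(R_K)$. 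Thus it suffices to check that the hypotheses under which Theorem \ref{theorem3}(2) holds are met in the present polynomial-over-$R_K$ setting; the inequality asserted in the theorem is then precisely the conclusion of Theorem \ref{theorem3}(2).

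Next I would verify those hypotheses. The assumption that $V^{(l-1)}(K)$ is a smooth $K$-algebraic variety of dimension $n-l+1$ means, by the convention of Remark \ref{Remark1}, that $V^{(l-1)}(K)$ is a closed submanifold of $K^n$; good reduction $\operatorname{mod}P_K$ is what legitimizes the use of Lemma \ref{lemma4} above and the rewriting of $E(z)$ as the normalized sum over $V^{(l-1)}(R_K/P_K^m)$. Since the $f_i\in R_K[x_1,\ldots,x_n]$ vanish at $0$, the mapping $\boldsymbol{f}$ is a polynomial mapping on $U=K^n$ with $\boldsymbol{f}(0)=0$; it is convenient by the standing convention, and by hypothesis $V^{(l)}(K)$ and $V^{(l-1)}(K)$ are non-degenerate complete intersection varieties. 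Hence the hypotheses of Remark \ref{remark5} hold, so by Remark \ref{remark10} the conclusions of Theorem \ref{theorem3} are available for our $\boldsymbol{f}$, the fixed subdivision $\Sigma^\ast$, and $\Phi=\mathbf{1}_{R_K^n}$, with no smallness restriction on the ambient open set. The remaining hypotheses of Theorem \ref{theorem3} — nonvanishing of $\bigwedge_{i=1}^{l}df_i$ on $V^{(l,\lambda)}(K)$ for every $\lambda\in K^\times$, and $\gamma_{\boldsymbol{f}}>-1$ — are part of the statement of the present theorem. Applying Theorem \ref{theorem3}(2) to $E_\Phi(z)=E(z)$ then yields $|E(z)|\le C(K)\,|z|_K^{\gamma_{\boldsymbol{f}}}\left(\log_q|z|_K\right)^{n-l}$ for $|z|_K$ large, as claimed.

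I do not expect a genuine analytic obstacle: the real content is the cancellation in the oscillatory integral, which is already packaged in Theorem \ref{theorem3} (through the toric resolution of Theorem \ref{proposition1}, the rationality of $Z_\Phi(s,\chi,V^{(l-1)},f_l)$, and its partial-fraction decomposition, together with Remark \ref{remark5A} for the order of the poles of largest real part). The point requiring care is bookkeeping — ensuring that Theorem \ref{theorem3} is applied in its \emph{global polynomial} form (Remark \ref{remark5}, Remark \ref{remark10}) rather than its local-germ form, since here the $f_i$ are honest polynomials and $\Phi$ is supported on all of $R_K^n$, not on an arbitrarily small neighborhood of the origin. It is also worth noting that the weaker route through Theorem \ref{theorem5}(2) does not suffice: that result bounds $N_m$ but captures no cancellation in $q^{-m(n-l+1)}\sum_{y\in V^{(l-1)}(R_K/P_K^m)}\Psi(zf_l(y))$, so passing through $E_\Phi$ and Theorem \ref{theorem3} is essential.
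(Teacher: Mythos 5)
Your proposal is correct and follows essentially the same route as the paper: identify $E(z)$ with $E_{\Phi}(z,V^{(l-1)},f_l)$ for $\Phi=\mathbf{1}_{R_K^n}$ via Lemma \ref{lemma4}, then invoke Theorem \ref{theorem3}(2) in its global polynomial form through Remarks \ref{remark5} and \ref{remark10}. Your additional bookkeeping about the verification of hypotheses (smoothness giving a closed submanifold, the standing convenience convention, and the distinction between germ and global forms) is implicit in the paper's one-line proof, so there is no substantive difference.
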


\begin{proof}
The result follows from the previous lemma by applying Theorem \ref{theorem3}%
\ (2) and Remark \ref{remark10}.
\end{proof}

\section{\label{section6}Explicit Formulas for Local Zeta Functions Supported
on Non-degenerate Complete Intersection Varieties}

In this section $V^{(l)}\left(  K\right)  $ and $V^{(l-1)}\left(  K\right)  $
are convenient and non-degenerate complete intersection varieties over
$\overline{K}$, and $V^{(l-1)}\left(  K\right)  $ is not necessarily a
submanifold. We associate to $V^{(l-1)}$and $f_{l}$ the following local zeta function:%

\[
\mathcal{Z}\left(  s,V^{(l-1)},f_{l}\right)  :=\lim_{r\rightarrow+\infty}%
%TCIMACRO{\dint \limits_{R_{K}^{n}}}%
%BeginExpansion
{\displaystyle\int\limits_{R_{K}^{n}}}
%EndExpansion
\delta_{r}\left(  f_{1}(x),\ldots,f_{l-1}(x)\right)  \left\vert f_{l}\left(
x\right)  \right\vert _{K}^{s}\mid dx\mid,
\]
where $s\in\mathbb{C}$, with $\operatorname{Re}(s)>0$, and $\mid dx\mid$ is
the normalized Haar measure of $K^{n}$.

We also define%

\[
\mathcal{Z}_{0}\left(  s,V^{(l-1)},f_{l}\right)  :=\lim_{r\rightarrow+\infty}%
%TCIMACRO{\dint \limits_{\left(  P_{K}\right)  ^{n}}}%
%BeginExpansion
{\displaystyle\int\limits_{\left(  P_{K}\right)  ^{n}}}
%EndExpansion
\delta_{r}\left(  f_{1}(x),\ldots,f_{l-1}(x)\right)  \left\vert f_{l}\left(
x\right)  \right\vert _{K}^{s}\mid dx\mid,
\]
for $\operatorname{Re}(s)>0$.

The following notation will be used in this section. Given a polynomial
mapping $\boldsymbol{f}=(f_{1},\ldots,f_{l}):K^{n}\rightarrow K^{l}$ over
$R_{K}$, and positive vector $a$, we set as before,%
\[
V_{a}^{(j)}\left(  \overline{K}\right)  =\left\{  \overline{z}\in\overline
{K}^{n}\mid\overline{f_{i,a}}\left(  \overline{z}\right)  =0,\text{
}i=1,\ldots,j\right\}  .
\]
In the case $a=0$, we take $V_{a}^{(j)}\left(  \overline{K}\right)
=V^{(j)}\left(  \overline{K}\right)  $. Let $\Delta$\ be a rational simplicial
cone spanned by $a_{i}$, $i=1,\ldots,e_{\Delta}$. We define the
\textit{barycenter} of $\Delta$\ as $b\left(  \Delta\right)  =%
%TCIMACRO{\tsum \nolimits_{i=1}^{e_{\Delta}}}%
%BeginExpansion
{\textstyle\sum\nolimits_{i=1}^{e_{\Delta}}}
%EndExpansion
a_{i}$.

For $x=\left(  x_{1},\ldots,x_{n}\right)  \in R_{K}^{n}$, we define
$ord(x)=\left(  ord\left(  x_{1}\right)  ,\ldots,ord\left(  x_{n}\right)
\right)  \in\mathbb{N}^{n}$, and
\[
E_{\Delta}=\left\{  x\in R_{K}^{n}\mid ord(x)\in\Delta\right\}  .
\]

\begin{theorem}
\label{theorem7}Let $\boldsymbol{f}=(f_{1},\ldots,f_{l}):K^{n}\rightarrow
K^{l}$, $\boldsymbol{f}\left(  0\right)  =0$, $2\leq l\leq n$, be a convenient
polynomial mapping over $R_{K}$. Assume that $V^{(l)}\left(  K\right)  $ and
$V^{(l-1)}\left(  K\right)  $ are non-degenerate complete intersection
varieties over $\overline{K}$.

\noindent(1) There exists a real constant $c\left(  \Gamma_{1},\ldots
,\Gamma_{l}\right)  $ such that $\mathcal{Z}\left(  s,V^{(l-1)},f_{l}\right)
$ is holomorphic on $\operatorname{Re}(s)>c\left(  \Gamma_{1},\ldots
,\Gamma_{l}\right)  $.

\noindent(2) $\mathcal{Z}\left(  s,V^{(l-1)},f_{l}\right)  $ admits a
meromorphic continuation to the complex plane as a rational function of
$q^{-s}$, which can be computed from any given rational simplicial polyhedral
subdivision $\Sigma^{\ast}$\ of $\mathbb{R}_{+}^{n}$ subordinate to
$\Gamma\left(  \boldsymbol{f}\right)  $ as follows:
\[
\mathcal{Z}\left(  s,V^{(l-1)},f_{l}\right)  =L_{0}\left(  q^{-s}\right)  +%
%TCIMACRO{\dsum \limits_{\Delta\in\Sigma^{\ast}}}%
%BeginExpansion
{\displaystyle\sum\limits_{\Delta\in\Sigma^{\ast}}}
%EndExpansion
L_{\Delta}\left(  q^{-s}\right)  S_{\Delta}\left(  q^{-s}\right)  ,
\]
where $L_{0}\left(  q^{-s}\right)  $, $L_{\Delta}\left(  q^{-s}\right)  $, and
$S_{\Delta}\left(  q^{-s}\right)  $ are defined as follows:
\[%
\begin{array}
[c]{l}%
L_{0}\left(  q^{-s}\right)  =q^{-\left(  n-l+1\right)  }card\left(
V^{(l-1)}\left(  \overline{K}\right)  \cap\left\{  z\in\left(  \overline
{K}^{\times}\right)  ^{n}\mid\overline{f}_{l}\left(  z\right)  \neq0\right\}
\right) \\
\\
+\text{ }q^{-s-\left(  n-l+1\right)  }card\left(  V^{(l)}\left(  \overline
{K}\right)  \cap\left(  \overline{K}^{\times}\right)  ^{n}\right)  \left(
\frac{1-q^{-1}}{1-q^{-s-1}}\right)  ;
\end{array}
\]%
\[%
\begin{array}
[c]{l}%
L_{\Delta}\left(  q^{-s}\right)  =\\
\\
q^{-\left(  n-l+1\right)  }card\left(  V_{b(\Delta)}^{(l-1)}\left(
\overline{K}\right)  \cap\left\{  z\in\left(  \overline{K}^{\times}\right)
^{n}\mid\overline{f}_{l,b(\Delta)}\left(  z\right)  \neq0\right\}  \right) \\
\\
+q^{-s-\left(  n-l+1\right)  }\left(  \frac{1-q^{-1}}{1-q^{-s-1}}\right)
card\left(  V_{b(\Delta)}^{(l)}\left(  \overline{K}\right)  \cap\left(
\overline{K}^{\times}\right)  ^{n}\right)  ;
\end{array}
\]
and%
\[%
\begin{array}
[c]{l}%
S_{\Delta}\left(  q^{-s}\right)  =\\
\left(
%TCIMACRO{\tsum \nolimits_{h}}%
%BeginExpansion
{\textstyle\sum\nolimits_{h}}
%EndExpansion
q^{d\left(  h,\Gamma_{l}\right)  s+\sigma\left(  h\right)  -%
%TCIMACRO{\tsum \nolimits_{j=1}^{l-1}}%
%BeginExpansion
{\textstyle\sum\nolimits_{j=1}^{l-1}}
%EndExpansion
d\left(  h,\Gamma_{j}\right)  }\right)
%TCIMACRO{\dprod \limits_{i=1}^{e_{\Delta}}}%
%BeginExpansion
{\displaystyle\prod\limits_{i=1}^{e_{\Delta}}}
%EndExpansion
\left(  \frac{q^{-d\left(  a_{i},\Gamma_{l}\right)  s-\sigma\left(
a_{i}\right)  +%
%TCIMACRO{\tsum \nolimits_{j=1}^{l-1}}%
%BeginExpansion
{\textstyle\sum\nolimits_{j=1}^{l-1}}
%EndExpansion
d\left(  a_{i},\Gamma_{j}\right)  }}{1-q^{-d\left(  a_{i},\Gamma_{l}\right)
s-\sigma\left(  a_{i}\right)  +%
%TCIMACRO{\tsum \nolimits_{j=1}^{l-1}}%
%BeginExpansion
{\textstyle\sum\nolimits_{j=1}^{l-1}}
%EndExpansion
d\left(  a_{i},\Gamma_{j}\right)  }}\right)  ,
\end{array}
\]
where $h$ runs through the elements of the set
\[
\mathbb{N}^{n}\cap\left\{
%TCIMACRO{\tsum \nolimits_{i=1}^{e_{\Delta}}}%
%BeginExpansion
{\textstyle\sum\nolimits_{i=1}^{e_{\Delta}}}
%EndExpansion
\mu_{i}a_{i}\mid0\leq\mu_{i}<1\text{ for }i=1,\ldots,e_{\Delta}\right\}  .
\]
\noindent(3) If $V^{(l-1)}(K)$ is a submanifold of $K^{n}$, then
\[
\mathcal{Z}\left(  s,V^{(l-1)},f_{l}\right)  =Z\left(  s,V^{(l-1)}%
,f_{l}\right)  .
\]

\end{theorem}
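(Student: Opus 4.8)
The plan is to reduce the integral defining $\mathcal{Z}\left(s,V^{(l-1)},f_{l}\right)$ to a finite sum of explicitly computable toric pieces, following the template used by Denef for the integrals of \cite{D1} but tracking the extra delta‑factor and working with the non‑degeneracy over $\overline{K}$. Since $\{x\in R_{K}^{n}\mid x_{i}=0\text{ for some }i\}$ is null, one decomposes $R_{K}^{n}$ modulo that set as $\bigsqcup_{\Delta}E_{\Delta}$ with $\Delta$ running over the cones of $\Sigma^{\ast}$, together with the separate piece $E_{\{0\}}=(R_{K}^{\times})^{n}$, and inside each $E_{\Delta}$ one stratifies by the value $k=\operatorname{ord}(x)\in\Delta\cap\mathbb{N}^{n}$, i.e.\ over $\pi^{k}(R_{K}^{\times})^{n}$. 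Substituting $x=\pi^{k}y$ gives $|dx|=q^{-\sigma(k)}|dy|$ and $f_{i}(\pi^{k}y)=\pi^{d(k,\Gamma_{i})}\bigl(f_{i,b(\Delta)}(y)+\pi(\cdots)\bigr)$, the leading term being the face function of the barycenter $b(\Delta)$ because $F(k,\Gamma_{i})$ is constant on the relative interior of $\Delta$ (here one uses that $\Sigma^{\ast}$ is subordinate to $\Gamma(\boldsymbol{f})$).

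On the stratum $\pi^{k}(R_{K}^{\times})^{n}$ the computation is the one carried out in Lemma~\ref{lemma1A}: the non‑degeneracy over $\overline{K}$ of the mappings defining $V^{(l-1)}$ and $V^{(l)}$ lets Hensel's lemma produce, in the ball over each point reducing into $V_{b(\Delta)}^{(l-1)}(\overline{K})$, resp.\ $V_{b(\Delta)}^{(l)}(\overline{K})$, a genuine common zero of $f_{1,b(\Delta)}+\pi(\cdots),\dots$, after which the implicit function theorem \cite[Lemma 7.4.3]{I2} makes $l-1$, resp.\ $l$, of those functions into coordinates. For $r$ larger than all the $d(k,\Gamma_{i})$ occurring this yields
\[
\int_{\pi^{k}(R_{K}^{\times})^{n}}\delta_{r}(f_{1},\ldots,f_{l-1})\,|f_{l}|_{K}^{s}\,|dx|=q^{-\sigma(k)+\sum_{j=1}^{l-1}d(k,\Gamma_{j})-s\,d(k,\Gamma_{l})}\,L_{\Delta}(q^{-s}),
\]
with $L_{\Delta}$ as in the statement (and $L_{0}$ for $\Delta=\{0\}$, $k=0$). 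Summing this geometric progression over $k\in\Delta\cap\mathbb{N}^{n}$ by the standard decomposition of the lattice points of a simplicial cone as $\bigsqcup_{h}\bigl(h+\sum_{i}\mathbb{Z}_{\geq0}a_{i}\bigr)$ with $h$ in the half‑open parallelepiped $\{\sum\mu_{i}a_{i}\mid 0<\mu_{i}\leq1\}$, and then passing to the parallelepiped $\{0\leq\mu_{i}<1\}$ of the statement via the involution $h\mapsto\sum_{i}a_{i}-h$ (which accounts for the numerators $q^{-d(a_{i},\Gamma_{l})s-\sigma(a_{i})+\sum_{j<l}d(a_{i},\Gamma_{j})}$ in $S_{\Delta}$), produces exactly $L_{\Delta}(q^{-s})S_{\Delta}(q^{-s})$; summing over $\Delta$ and adding the piece $L_{0}(q^{-s})$ from $(R_{K}^{\times})^{n}$ gives the formula of part~(2).

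For convergence and the limit: every strictly positive vertex $a$ of $\Sigma^{\ast}$ has $d(a,\Gamma_{l})\geq1$ because $\Gamma_{l}$ is bounded away from the origin, while every coordinate vertex $E_{j}$ has $d(E_{j},\Gamma_{i})=0$ for all $i$ by convenience; hence each series $\sum_{k}q^{-\sigma(k)+\sum_{j<l}d(k,\Gamma_{j})-s\,d(k,\Gamma_{l})}$, and thus the whole sum, converges for $\operatorname{Re}(s)$ larger than an explicit constant $c(\Gamma_{1},\ldots,\Gamma_{l})$ depending only on the $\Gamma_{i}$ (essentially $\sup_{a\succ0}\bigl(\sum_{j<l}d(a,\Gamma_{j})-\sigma(a)\bigr)/d(a,\Gamma_{l})$), proving part~(1), and the resulting closed form is the rational meromorphic continuation of part~(2). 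Existence of the limit follows, as in \cite{D1}, by noting that the contributions of the $k$'s with some $d(k,\Gamma_{i})\geq r$ — where the displayed identity fails — are bounded by $q^{\,r(l-1)-\operatorname{Re}(s)\,d(k,\Gamma_{l})-\sigma(k)}$, and since $d(k,\Gamma_{i})\geq r$ forces $d(k,\Gamma_{l})\geq r-O(1)$ (again using $d(a,\Gamma_{l})\geq1$ for strictly positive vertices), these sum to something tending to $0$ as $r\to+\infty$ once $\operatorname{Re}(s)$ is large. Finally, for part~(3), when $V^{(l-1)}(K)$ is a submanifold, choosing near each point of $V^{(l-1)}(R_{K})$ coordinates with $y_{i}=f_{i}(x)$ for $i\leq l-1$ turns the integral into $\int\delta_{r}(y_{1},\ldots,y_{l-1})\,|h(y)|_{K}^{s}\,|J(y)|_{K}^{-1}|dy|$ with $h$ representing $f_{l}$; for $\operatorname{Re}(s)>0$ the function $|h|_{K}^{s}$ is bounded and fiberwise integrable and $\delta_{r}(y_{1},\ldots,y_{l-1})|dy_{1}\cdots dy_{l-1}|$ tends weakly to the point mass at $0$, so the limit equals $\int\delta(f_{1},\ldots,f_{l-1})|f_{l}|_{K}^{s}|dx|=Z(s,V^{(l-1)},f_{l})$ by the Gel'fand--Shilov definition (alternatively one matches the formula of part~(2) against the toric expansion of $Z(s,V^{(l-1)},f_{l})$).

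The main obstacle will be the second step: carrying out the stratum‑by‑stratum local integral \emph{uniformly enough in $k$ and $r$} to be allowed to sum over $k$ and interchange with $\lim_{r\to+\infty}$. One must control the $k$‑dependent remainders $\pi(\cdots)$ in $f_{i}(\pi^{k}y)$ — harmless for the Hensel/implicit‑function step, since they do not change the reduction mod $P_{K}$, but present when estimating the non‑leading contributions — and must push the two parallelepiped sums through so that the outcome is precisely $S_{\Delta}$ rather than $S_{\Delta}$ up to an unidentified rational factor. The hypothesis of non‑degeneracy over $\overline{K}$, rather than over $K$, is exactly what keeps Hensel's lemma available throughout.
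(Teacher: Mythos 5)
Your proposal follows essentially the same route as the paper's own proof: decompose $R_{K}^{n}$ into $(R_{K}^{\times})^{n}$ and the toric pieces $E_{\Delta}$, substitute $x=\pi^{m}u$ on each stratum, pull out the face function $f_{i,b(\Delta)}$ as the leading term, evaluate the inner integral over $(R_{K}^{\times})^{n}$ via the Hensel/implicit-function change of variables $y_{i}=(f_{i}(x_{0}+\pi x)-f_{i}(x_{0}))/\pi$ (which is what the in-line computation in Claims 1 and 2 of the paper, and independently Lemma~\ref{lemma1A}, amounts to), then sum the geometric series over $\Delta\cap\mathbb{N}^{n}$ and pass to the limit in $r$ by a tail estimate/dominated convergence. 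The only stylistic differences are that you invoke Lemma~\ref{lemma1A} and spell out the parallelepiped involution $h\mapsto\sum_{i}a_{i}-h$ explicitly (the paper delegates this series manipulation to Denef--Hoornaert), and for part (3) you offer a direct weak-convergence argument in place of the paper's matching of the two toric expansions; both variants are correct and neither changes the substance of the argument.
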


\begin{proof}
We set for $r\in\mathbb{N}$,
\[
I_{0}^{(r)}\left(  q^{-s}\right)  :=%
%TCIMACRO{\dint \limits_{\left(  R_{K}^{\times}\right)  ^{n}}}%
%BeginExpansion
{\displaystyle\int\limits_{\left(  R_{K}^{\times}\right)  ^{n}}}
%EndExpansion
\delta_{r}\left(  f_{1}\left(  x\right)  ,\ldots,f_{l-1}\left(  x\right)
\right)  \left\vert f_{l}\left(  x\right)  \right\vert _{K}^{s}\left\vert
dx\right\vert ,\text{ }\operatorname{Re}(s)>0,
\]
and
\[
I_{\Delta}^{(r)}\left(  q^{-s}\right)  :=%
%TCIMACRO{\dint \limits_{E_{\Delta}}}%
%BeginExpansion
{\displaystyle\int\limits_{E_{\Delta}}}
%EndExpansion
\delta_{r}\left(  f_{1}\left(  x\right)  ,\ldots,f_{l-1}\left(  x\right)
\right)  \left\vert f_{l}\left(  x\right)  \right\vert _{K}^{s}\left\vert
dx\right\vert ,\text{ }\operatorname{Re}(s)>0.
\]

Since $\mathbb{R}_{+}^{n}=\left\{  0\right\}
%TCIMACRO{\tbigcup }%
%BeginExpansion
{\textstyle\bigcup}
%EndExpansion
\cup_{\Delta\in\Sigma^{\ast}}\Delta$, we have%
\[
\mathcal{Z}\left(  s,V^{(l-1)},f_{l}\right)  =\lim_{r\rightarrow+\infty}%
I_{0}^{(r)}\left(  q^{-s}\right)  +%
%TCIMACRO{\dsum \limits_{\Delta\in\Sigma^{\ast}}}%
%BeginExpansion
{\displaystyle\sum\limits_{\Delta\in\Sigma^{\ast}}}
%EndExpansion
\lim_{r\rightarrow+\infty}I_{\Delta}^{(r)}\left(  q^{-s}\right)  .
\]

The \ parts (1)-(2) of the theorem follows from the previous formula by using
the following two claims.

\begin{claim}
\label{claim1} $\lim_{r\rightarrow+\infty}I_{0}^{(r)}\left(  q^{-s}\right)  $
gives a holomorphic function for $\operatorname{Re}(s)>-1$. Furthermore,
$\ \lim_{r\rightarrow+\infty}I_{0}^{(r)}\left(  q^{-s}\right)  =L_{0}\left(
q^{-s}\right)  $.
\end{claim}

\begin{claim}
\label{claim2}Let $\Delta$ be a rational simplicial cone spanned by $a_{i}$,
$i=1,\ldots,e_{\Delta}$. Then $\lim_{r\rightarrow+\infty}I_{\Delta}%
^{(r)}\left(  q^{-s}\right)  $ gives a holomorphic function for
$\operatorname{Re}(s)>c\left(  \Delta\right)  $, where $c\left(
\Delta\right)  $\ is a real constant. In addition,
\[
\lim_{r\rightarrow+\infty}I_{\Delta}^{(r)}\left(  q^{-s}\right)  =L_{\Delta
}\left(  q^{-s}\right)  S_{\Delta}\left(  q^{-s}\right)  .
\]

\end{claim}

\textbf{Proof Claim }\ref{claim1}. Note that $I_{0}^{(r)}\left(
q^{-s}\right)  $ can be expressed as a finite sum of integrals of type
\[
I^{(r)}\left(  s,x_{0}\right)  :=%
%TCIMACRO{\dint \limits_{x_{0}+\left(  P_{K}\right)  ^{n}}}%
%BeginExpansion
{\displaystyle\int\limits_{x_{0}+\left(  P_{K}\right)  ^{n}}}
%EndExpansion
\delta\left(  f_{1}\left(  x\right)  ,\ldots,f_{l-1}\left(  x\right)  \right)
\left\vert f_{l}\left(  x\right)  \right\vert _{K}^{s}\left\vert dx\right\vert
,\text{ }\operatorname{Re}(s)>0,
\]
where $x_{0}$ runs through a fixed set of representatives of $\left(
R_{K}^{\times}\right)  ^{n}$ $\operatorname{mod}$ $P_{K}$. We now put
\begin{equation}
y_{i}:=\left\{
\begin{array}
[c]{lll}%
\frac{f_{i}\left(  x_{0}+\pi x\right)  -f_{i}\left(  x_{0}\right)  }{\pi} &  &
i=1,\ldots,l\\
&  & \\
x_{i} &  & i=l+1,\ldots,n,
\end{array}
\right.  \label{changevar}%
\end{equation}
and $y=\left(  y_{1},\ldots,y_{n}\right)  :=\rho\left(  x\right)  $. Since
$\boldsymbol{f}=\left(  f_{1},\ldots,f_{l}\right)  $ is strongly
non-degenerate over $\overline{K}$, $y=\rho\left(  x\right)  $ gives a measure
preserving $K$-bianalytic map from $R_{K}^{n}$ to itself (cf. \cite[Lemma
7.4.3]{I2}), and therefore%

\[
I^{(r)}\left(  s,x_{0}\right)  =q^{-n}%
%TCIMACRO{\dint \limits_{R_{K}^{n}}}%
%BeginExpansion
{\displaystyle\int\limits_{R_{K}^{n}}}
%EndExpansion
\delta_{r}\left(  \pi y_{1}+f_{1}\left(  x_{0}\right)  ,\ldots,\pi
y_{l-1}+f_{l-1}\left(  x_{0}\right)  \right)  \left\vert \pi y_{l}%
+f_{l}\left(  x_{0}\right)  \right\vert _{K}^{s}\left\vert dy\right\vert .
\]
If $\overline{x_{0}}\notin V^{(l-1)}\left(  \overline{K}\right)  $, then
$\lim_{r\rightarrow+\infty}I^{(r)}\left(  s,x_{0}\right)  $ $=0$. Now, if
$\overline{x_{0}}\in V^{(l-1)}\left(  \overline{K}\right)  $ and
$\overline{x_{0}}\notin V^{(l)}\left(  \overline{K}\right)  $, we have%
\[
\lim_{r\rightarrow+\infty}I^{(r)}\left(  s,x_{0}\right)  =q^{-n}%
\lim_{r\rightarrow+\infty}%
%TCIMACRO{\dint \limits_{R_{K}^{l-1}}}%
%BeginExpansion
{\displaystyle\int\limits_{R_{K}^{l-1}}}
%EndExpansion
\delta_{r}\left(  \pi y_{1}+f_{1}\left(  x_{0}\right)  ,\ldots,\pi
y_{l-1}+f_{l-1}\left(  x_{0}\right)  \right)  \left\vert dy\right\vert
\]%
\[
=q^{-\left(  n-l+1\right)  }\lim_{r\rightarrow+\infty}%
%TCIMACRO{\dint \limits_{R_{K}^{l-1}}}%
%BeginExpansion
{\displaystyle\int\limits_{R_{K}^{l-1}}}
%EndExpansion
\delta_{r}\left(  z_{1},\ldots,z_{l-1}\right)  \left\vert dz\right\vert
=q^{-\left(  n-l+1\right)  }.
\]

If $\overline{x_{0}}\in V^{(l)}\left(  \overline{K}\right)  $, we have
\begin{align*}
\lim_{r\rightarrow+\infty}I^{(r)}\left(  s,x_{0}\right)   &  =q^{-\left(
n-l+1\right)  -s}\lim_{r\rightarrow+\infty}%
%TCIMACRO{\dint \limits_{R_{K}^{l-1}}}%
%BeginExpansion
{\displaystyle\int\limits_{R_{K}^{l-1}}}
%EndExpansion
\delta_{r}\left(  z_{1},\ldots,z_{l-1}\right)  \left\vert dz\right\vert
%TCIMACRO{\dint \limits_{R_{K}}}%
%BeginExpansion
{\displaystyle\int\limits_{R_{K}}}
%EndExpansion
\left\vert z_{l}\right\vert _{K}^{s}\left\vert dz_{l}\right\vert \\
&  =q^{-\left(  n-l+1\right)  -s}%
%TCIMACRO{\dint \limits_{R_{K}}}%
%BeginExpansion
{\displaystyle\int\limits_{R_{K}}}
%EndExpansion
\left\vert z_{l}\right\vert _{K}^{s}\left\vert dz_{l}\right\vert ,\text{ for
}\operatorname{Re}(s)>0\text{.}%
\end{align*}
Therefore,
\[
(A)\text{ }\lim_{r\rightarrow+\infty}I^{(r)}\left(  s,x_{0}\right)  \text{
gives a holomorphic function for }\operatorname{Re}(s)>0\text{,}%
\]
and
\[
(B)\text{ }\lim_{r\rightarrow+\infty}I^{(r)}\left(  s,x_{0}\right)  =\left\{
\begin{array}
[c]{lll}%
0 & \text{if\qquad} & \overline{x_{0}}\notin V^{(l-1)}\left(  \overline
{K}\right) \\
&  & \\
q^{-\left(  n-l+1\right)  } & \text{if\qquad} & \left\{
\begin{array}
[c]{l}%
\overline{x_{0}}\in V^{(l-1)}\left(  \overline{K}\right)  \text{ }\\
\text{and}\\
\overline{x_{0}}\notin V^{(l)}\left(  \overline{K}\right)
\end{array}
\right. \\
&  & \\
q^{-s-\left(  n-l+1\right)  }\left(  \frac{1-q^{-1}}{1-q^{-s-1}}\right)  &
\text{if\qquad} & \overline{x_{0}}\in V^{(l)}\left(  \overline{K}\right)  .
\end{array}
\right.
\]

Now, the announced claim follows from (A) and (B).

\textbf{Proof of Claim } \ref{claim2}. We first note that%
\[
I_{\Delta}^{(r)}\left(  q^{-s}\right)  =%
%TCIMACRO{\dsum \limits_{m\in\mathbb{N}^{n}\cap\Delta}}%
%BeginExpansion
{\displaystyle\sum\limits_{m\in\mathbb{N}^{n}\cap\Delta}}
%EndExpansion
\text{ \ }%
%TCIMACRO{\dint \limits_{ord(x)=m}}%
%BeginExpansion
{\displaystyle\int\limits_{ord(x)=m}}
%EndExpansion
\delta_{r}\left(  f_{1}\left(  x\right)  ,\ldots,f_{l-1}\left(  x\right)
\right)  \left\vert f_{l}\left(  x\right)  \right\vert _{K}^{s}\left\vert
dx\right\vert .
\]
For $m=\left(  m_{1},\ldots,m_{n}\right)  \in\mathbb{N}^{n}\cap\Delta$ we
define%
\[
x_{i}=\pi^{m_{i}}u_{i},\text{ }u_{i}\in R_{K}^{\times}\text{, }i=1,\ldots
,n\text{.}%
\]
Then $\left\vert dx\right\vert =q^{-\sigma\left(  m\right)  }\left\vert
du\right\vert $,
\[
f_{i}\left(  x\right)  =\pi^{d\left(  m,\Gamma_{i}\right)  }\left(
f_{i,b\left(  \Delta\right)  }\left(  u\right)  +\pi g_{i}\left(  u\right)
\right)  =\pi^{d\left(  m,\Gamma_{i}\right)  }\widetilde{f}_{i}\left(
u\right)  \text{, }i=1,\ldots,l,
\]
and $I_{\Delta}^{(r)}\left(  q^{-s}\right)  $ equals
\[%
%TCIMACRO{\dsum \limits_{m\in\mathbb{N}^{n}\cap\Delta}}%
%BeginExpansion
{\displaystyle\sum\limits_{m\in\mathbb{N}^{n}\cap\Delta}}
%EndExpansion
q^{-d\left(  m,\Gamma_{l}\right)  s-\sigma\left(  m\right)  }%
%TCIMACRO{\dint \limits_{\left(  R_{K}^{\times}\right)  ^{n}}}%
%BeginExpansion
{\displaystyle\int\limits_{\left(  R_{K}^{\times}\right)  ^{n}}}
%EndExpansion
\delta_{r}\left(  \pi^{d\left(  m,\Gamma_{1}\right)  }\widetilde{f}_{1}\left(
u\right)  ,\ldots,\pi^{d\left(  m,\Gamma_{l-1}\right)  }\widetilde{f}%
_{l-1}\left(  u\right)  \right)  \left\vert \widetilde{f}_{l}\left(  u\right)
\right\vert _{K}^{s}\left\vert du\right\vert .
\]
Since
\begin{align*}
&
%TCIMACRO{\dint \limits_{\left(  R_{K}^{\times}\right)  ^{n}}}%
%BeginExpansion
{\displaystyle\int\limits_{\left(  R_{K}^{\times}\right)  ^{n}}}
%EndExpansion
\delta_{r}\left(  \pi^{d\left(  m,\Gamma_{1}\right)  }\widetilde{f}_{1}\left(
u\right)  ,\ldots,\pi^{d\left(  m,\Gamma_{l-1}\right)  }\widetilde{f}%
_{l-1}\left(  u\right)  \right)  \left\vert \widetilde{f}_{l}\left(  u\right)
\right\vert _{K}^{s}\left\vert du\right\vert \\
&  =%
%TCIMACRO{\dsum \limits_{x_{0}\in\left(  \mathcal{R}^{\times}\right)  ^{n}}}%
%BeginExpansion
{\displaystyle\sum\limits_{x_{0}\in\left(  \mathcal{R}^{\times}\right)  ^{n}}}
%EndExpansion%
%TCIMACRO{\dint \limits_{x_{0}+\left(  P_{K}\right)  ^{n}}}%
%BeginExpansion
{\displaystyle\int\limits_{x_{0}+\left(  P_{K}\right)  ^{n}}}
%EndExpansion
\delta_{r}\left(  \pi^{d\left(  m,\Gamma_{1}\right)  }\widetilde{f}_{1}\left(
u\right)  ,\ldots,\pi^{d\left(  m,\Gamma_{l-1}\right)  }\widetilde{f}%
_{l-1}\left(  u\right)  \right)  \left\vert \widetilde{f}_{l}\left(  u\right)
\right\vert _{K}^{s}\left\vert du\right\vert ,
\end{align*}
where $\mathcal{R}^{\times}$ denotes a fixed set of representatives of
$\overline{K}^{\times}$ in $R_{K}$, by changing variables as in
(\ref{changevar}), one gets that $\lim_{r\rightarrow+\infty}I_{\Delta}%
^{(r)}\left(  q^{-s}\right)  $ is equal to
\begin{equation}
L_{\Delta}\left(  q^{-s}\right)  \lim_{r\rightarrow+\infty}%
%TCIMACRO{\dsum \limits_{m\in\mathbb{N}^{n}\cap\Delta}}%
%BeginExpansion
{\displaystyle\sum\limits_{m\in\mathbb{N}^{n}\cap\Delta}}
%EndExpansion
\left(  q^{-d\left(  m,\Gamma_{l}\right)  s-\left(  \sigma\left(  m\right)  -%
%TCIMACRO{\tsum \nolimits_{j=1}^{l-1}}%
%BeginExpansion
{\textstyle\sum\nolimits_{j=1}^{l-1}}
%EndExpansion
d\left(  m,\Gamma_{j}\right)  \right)  }%
%TCIMACRO{\dint \limits_{B_{m}}}%
%BeginExpansion
{\displaystyle\int\limits_{B_{m}}}
%EndExpansion
\delta_{r}\left(  z_{1},\ldots,z_{l-1}\right)  \left\vert dz\right\vert
\right)  , \label{series}%
\end{equation}
where $B_{m}=\pi^{d\left(  m,\Gamma_{1}\right)  +1}R_{K}\times\ldots\times
\pi^{d\left(  m,\Gamma_{l-1}\right)  +1}R_{K}$.

For a strictly positive vertex $a$, we have $d\left(  a,\Gamma_{j}\right)
>0$. A vertex which is not strictly positive is a vector in the set $\left\{
E_{1},\ldots,E_{n}\right\}  $ because $\boldsymbol{f}$ is convenient. But for
such a vector $E_{i}$, we have $d\left(  E_{i},\Gamma_{j}\right)  =0$. For
strictly positive vertices $a_{i}$, we set%
\[
c(a_{i}):=-\frac{\sigma\left(  a_{i}\right)  -%
%TCIMACRO{\tsum \nolimits_{j=1}^{l-1}}%
%BeginExpansion
{\textstyle\sum\nolimits_{j=1}^{l-1}}
%EndExpansion
d\left(  a_{i},\Gamma_{j}\right)  }{d\left(  a_{i},\Gamma_{l}\right)  },
\]
and for a cone $\Delta$ spanned by $a_{i}$ with $i=1,\ldots,e_{\Delta}$, we
set
\[
c(\Delta):=\max_{i=1,\ldots,e_{\Delta}}\left\{  c(a_{i})\mid a_{i}\text{ is
strictly positive}\right\}  .
\]
By using the argument given by Denef and Hoornaert for the case $l=1$ (see
\cite{D-H}), one verifies that series%

\[%
%TCIMACRO{\dsum \limits_{m\in\mathbb{N}^{n}\cap\Delta}}%
%BeginExpansion
{\displaystyle\sum\limits_{m\in\mathbb{N}^{n}\cap\Delta}}
%EndExpansion
q^{-d\left(  m,\Gamma_{l}\right)  s-\left(  \sigma\left(  m\right)  -%
%TCIMACRO{\tsum \nolimits_{j=1}^{l-1}}%
%BeginExpansion
{\textstyle\sum\nolimits_{j=1}^{l-1}}
%EndExpansion
d\left(  m,\Gamma_{j}\right)  \right)  }%
\]
converges absolutely on $\operatorname{Re}(s)>c(\Delta)$ and that it defines a
holomorphic function on $\operatorname{Re}(s)>c(\Delta)$, and furthermore,
\[
S_{\Delta}\left(  q^{-s}\right)  =%
%TCIMACRO{\dsum \limits_{m\in\mathbb{N}^{n}\cap\Delta}}%
%BeginExpansion
{\displaystyle\sum\limits_{m\in\mathbb{N}^{n}\cap\Delta}}
%EndExpansion
q^{-d\left(  m,\Gamma_{l}\right)  s-\left(  \sigma\left(  m\right)  -%
%TCIMACRO{\tsum \nolimits_{j=1}^{l-1}}%
%BeginExpansion
{\textstyle\sum\nolimits_{j=1}^{l-1}}
%EndExpansion
d\left(  m,\Gamma_{j}\right)  \right)  }.
\]
By using the dominated convergence theorem, and the fact that series
$S_{\Delta}\left(  q^{-s}\right)  $ converges absolutely on $\operatorname{Re}%
(s)>c(\Delta)$, one gets that $\lim_{r\rightarrow+\infty}I_{\Delta}%
^{(r)}\left(  q^{-s}\right)  $ equals
\begin{align*}
&  L_{\Delta}\left(  q^{-s}\right)
%TCIMACRO{\dsum \limits_{m\in\mathbb{N}^{n}\cap\Delta}}%
%BeginExpansion
{\displaystyle\sum\limits_{m\in\mathbb{N}^{n}\cap\Delta}}
%EndExpansion
\left(  q^{-d\left(  m,\Gamma_{l}\right)  s-\left(  \sigma\left(  m\right)  -%
%TCIMACRO{\tsum \nolimits_{j=1}^{l-1}}%
%BeginExpansion
{\textstyle\sum\nolimits_{j=1}^{l-1}}
%EndExpansion
d\left(  m,\Gamma_{j}\right)  \right)  }\text{ }\lim_{r\rightarrow+\infty
}\text{\ }%
%TCIMACRO{\dint \limits_{B_{m}}}%
%BeginExpansion
{\displaystyle\int\limits_{B_{m}}}
%EndExpansion
\delta_{r}\left(  z_{1},\ldots,z_{l-1}\right)  \left\vert dz\right\vert
\right) \\
&  =L_{\Delta}\left(  q^{-s}\right)  S_{\Delta}\left(  q^{-s}\right)  ,
\end{align*}
since $\lim_{r\rightarrow+\infty}\delta_{r}=\delta$ on $S(K^{n})$.

Finally, we set $c\left(  \Gamma_{1},\ldots,\Gamma_{l}\right)  $ as
$\max\left\{  \cup_{\Delta\in\Sigma^{\ast}}c(\Delta)\cup\left\{  -1\right\}
\right\}  $, then $\mathcal{Z}\left(  s,V^{(l-1)},f_{l}\right)  $ is
holomorphic on $\operatorname{Re}(s)>c\left(  \Gamma_{1},\ldots,\Gamma
_{l}\right)  $.

If $V^{(l-1)}(K)$ is a submanifold of $K^{n}$, the previous reasoning shows
that the meromorphic continuation of $Z\left(  s,V^{(l-1)},f_{l}\right)
$\ can be computed from any given rational simplicial polyhedral subdivision
of $\mathbb{R}_{+}^{n}$ subordinate to $\Gamma\left(  \boldsymbol{f}\right)  $
using the explicit formula given in the statement of the theorem, then
$Z\left(  s,V^{(l-1)},f_{l}\right)  =\mathcal{Z}\left(  s,V^{(l-1)}%
,f_{l}\right)  $.
\end{proof}

\begin{remark}
\label{Remark12}If in Theorem \ref{theorem7}, we assume that $V^{(l)}\left(
K\right)  $ and $V^{(l-1)}\left(  K\right)  $ are non-degenerate complete
intersection varieties at the origin over $\overline{K}$, with same notation,
we have
\[
\mathcal{Z}_{0}\left(  s,V^{(l-1)},f_{l}\right)  =%
%TCIMACRO{\dsum \limits_{\substack{\Delta\in\Sigma^{\ast}\\b(\Delta)\succ0}}}%
%BeginExpansion
{\displaystyle\sum\limits_{\substack{\Delta\in\Sigma^{\ast}\\b(\Delta)\succ
0}}}
%EndExpansion
L_{\Delta}\left(  q^{-s}\right)  S_{\Delta}\left(  q^{-s}\right)  .
\]

\end{remark}

The following problems \ emerge naturally motivated by our previous theorem.

\begin{problem}
Let $V^{(l-1)}\left(  K\right)  $ be $K$-analytic subset, and let
$f_{l}:V^{(l-1)}\left(  K\right)  \rightarrow K$ be an $K$-analytic function.
For which pairs $\left(  f_{l},V^{(l-1)}\right)  $ is $\mathcal{Z}_{\Phi
}\left(  \omega,V^{(l-1)},f_{l}\right)  $ a \ rational function of $q^{-s}$?
\end{problem}

\begin{problem}
Let $V^{(l-1)}\left(  K\right)  $ be $K$-analytic \ submanifold of $U$, and
let $f_{l}:V^{(l-1)}\left(  K\right)  $ $\rightarrow K$ be an $K$-analytic
function. \ Is $\mathcal{Z}_{\Phi}\left(  \omega,V^{(l-1)},f_{l}\right)
=Z_{\Phi}\left(  \omega,V^{(l-1)},f_{l}\right)  ?$
\end{problem}

\section{Relative Monodromy and Poles of Local Zeta Functions}

With the obvious analogous definitions for strongly non-degeneracy over
$\mathbb{C}$ (respectively, strongly non-degeneracy at the origin), we have
\ the following. Suppose \ that $f_{1},\ldots,f_{l}$ are polynomials in $n$
variables with coefficients in a number field $F$ $\left(  \subseteq
\mathbb{C}\right)  $. Then we can consider $\boldsymbol{f}=(f_{1},\ldots
,f_{l})$ as a map $K^{n}\rightarrow K^{l}$ for any non-Archimedean completion
$K$ of $F$. If $\boldsymbol{f}$ is strongly non-degenerate over $\mathbb{C}$,
then $\boldsymbol{f}$ is strongly non-degenerate over $\overline{K}$\ for
almost all the completions \ $K$ of $F$. \ This \ fact follows by applying the
Weak Nullstellensatz.

We can associate to a complex non-degenerate polynomial mapping defined over a
number field $F$ a local zeta function, say $Z_{\Phi}(s,\chi,V^{(l-1)}%
,f_{l},K)$. In the case in which $V^{(l-1)}$ is an open subset of $K^{n}$,
there are several conjectures relating the real parts of the poles of local
zeta functions $Z_{\Phi}(s,\chi,V^{(l-1)},f_{l},K)$ and the eigenvalues of the
complex local monodromy (see \cite[and references therein]{D0}).

Consider two complete intersection varieties defined in a neighborhood $U$ of
the origin of $\mathbb{C}^{n}$:%
\begin{align*}
V^{\left(  l\right)  }\left(  \mathbb{C}\right)   &  =\left\{  z\in U\mid
f_{i}\left(  z\right)  =0\text{, }i=1,\ldots,l\right\}  ,\\
V^{\left(  l-1\right)  }\left(  \mathbb{C}\right)   &  =\left\{  z\in U\mid
f_{i}\left(  z\right)  =0\text{, }i=1,\ldots,l-1\right\}  .
\end{align*}
Assume that $V^{\left(  l\right)  }$, $V^{\left(  l-1\right)  }$ are germs of
non-degenerate complete intersection varieties at the origin and that
$V^{\left(  l-1\right)  }$ has at most an isolated singularity at the origin.
Consider the Milnor fibration
\[
f_{l}:E^{\ast}\left(  \varepsilon,\delta\right)  \rightarrow D_{\delta}^{\ast
},
\]
where%
\[
E^{\ast}\left(  \varepsilon,\delta\right)  =\left\{  z\in V^{\left(
l-1\right)  }\mid\left\Vert z\right\Vert <\epsilon\text{, }0<\left\vert
f_{l}\left(  z\right)  \right\vert \leq\delta\right\}  ,
\]
and
\[
D_{\delta}^{\ast}=\left\{  y\in\mathbb{C\mid}0<\left\vert y\right\vert
\leq\delta\right\}  .
\]
The zeta function of the monodromy of this fibration is called the
$l$\textit{-th principal zeta function of the Milnor fibration} of the mapping
$\boldsymbol{f}=(f_{1},\ldots,f_{l}):\left(  U,0\right)  \rightarrow\left(
\mathbb{C}^{l},0\right)  $, $2\leq l\leq n$,\ and it is denoted as $\zeta
_{l}\left(  t\right)  $ \cite{O2}. The corresponding monodromy is called
\textit{the }$l$\textit{-th principal monodromy of }$f_{l}$\textit{\ relative
to }$V_{l-1}$ \textit{at the origin}. In \cite{O1} M. Oka gave an explicit
formula for $\zeta_{l}\left(  t\right)  $ in terms of the Newton polyhedra
$\Gamma_{1},\ldots,\Gamma_{l}$. A similar result was also proved by A. N.
Kirillov in \cite{Kir}. These results are a generalization of Varchenko's
explicit formula for the monodromy zeta function of a non-degenerate analytic
function \cite{Var1}.

\begin{conjecture}
\label{conj}(Relative Monodromy Conjecture) For almost all the completions $K$
of $F$, if $s$ is a pole of $Z_{\Phi}(s,\chi,f_{l},V_{l-1},K)$, then
$\exp\left(  2\pi\sqrt{-1}\operatorname{Re}\left(  s\right)  \right)  $ is an
eigenvalue of the $l$\textit{-th principal monodromy of }$f_{l}$%
\textit{\ relative to }$V^{(l-1)}$ \textit{at the origin}.
\end{conjecture}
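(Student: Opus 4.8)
The plan is to reduce the conjecture to a face-by-face comparison between the explicit list of poles coming from the toric resolution of Theorem~\ref{proposition1} and M.~Oka's combinatorial formula \cite{O1} for the $l$-th principal monodromy zeta function $\zeta_{l}(t)$. First I would fix, as in Theorem~\ref{theorem1}, a rational simple polyhedral subdivision $\Sigma^{\ast}=\Sigma^{\ast}(\boldsymbol{f})$ of $\mathbb{R}_{+}^{n}$ subordinate to $\Gamma(\boldsymbol{f})$; since the supports of the $f_{i}$, the polyhedra $\Gamma_{i}$, and (by the Weak Nullstellensatz argument recalled in Section~7) strong non-degeneracy are all preserved under reduction for almost all completions $K$ of $F$, the same $\Sigma^{\ast}$ serves simultaneously for those $K$. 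By Theorem~\ref{theorem1}, every pole $s$ of $Z_{\Phi}(s,\chi,V^{(l-1)},f_{l},K)$ satisfies
\[
\operatorname{Re}(s)\in\Bigl\{-\tfrac{\sigma(a)-\sum_{j=1}^{l-1}d(a,\Gamma_{j})}{d(a,\Gamma_{l})}\ :\ a\in\mathrm{Vert}(\Sigma^{\ast}),\ a\succ0,\ d(a,\Gamma_{l})\neq0\Bigr\}\cup\{-1\},
\]
so $\exp\!\left(2\pi\sqrt{-1}\operatorname{Re}(s)\right)$ is always a $d(a,\Gamma_{l})$-th root of unity attached to some strictly positive vertex $a$ (the value $-1$ giving the eigenvalue $1$, which always occurs for a monodromy). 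The task is then to show that each such root of unity that actually arises from a \emph{genuine} pole is a zero of $\zeta_{l}(t)$.

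Next I would make the pole side precise, using the explicit computation behind Theorem~\ref{theorem7} and its local variant Remark~\ref{Remark12}, adapted to $Z_{\Phi}(s,\chi,V^{(l-1)},f_{l},K)$ itself rather than to the naïve zeta function. After the toric change of variables the integral splits into a sum over cones $\Delta\in\Sigma^{\ast}$, and the factor of $S_{\Delta}(q^{-s})$ attached to a ray $a_{i}\in\Delta$ contributes the pole locus $\operatorname{Re}(s)=c(a_{i})$ with a residue that is governed by the numerators $L_{\Delta}(q^{-s})$ — that is, by Euler-characteristic–type point counts of the toric strata of $V^{(l-1)}$ meeting the exceptional divisor $E_{a_{i}}$, together with, in the case $\chi\neq\chi_{\mathrm{triv}}$, the character sums controlled by Theorem~\ref{theorem2}. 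In particular a strictly positive vertex $a$ produces no genuine first-order pole unless these strata are non-empty; higher-order poles occur exactly when several $c(a_{i})$ coincide along a common cone. So the set of vertices relevant to the conjecture is indexed by the non-empty toric faces of the complete intersection.

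On the monodromy side I would invoke Oka's formula for $\zeta_{l}(t)$, which presents it as a finite alternating product of cyclotomic factors $(1-t^{d(a,\Gamma_{l})})^{\pm e_{a}}$ over the vertices $a$, the exponent $e_{a}$ being an Euler characteristic of the toric stratum cut on the relative Milnor fibre by the face functions $f_{1,a},\dots,f_{l,a}$. Because such a factor has \emph{every} $d(a,\Gamma_{l})$-th root of unity as a zero, it suffices to show that whenever a strictly positive vertex $a$ carries a genuine pole of $Z_{\Phi}$, the factor $(1-t^{d(a,\Gamma_{l})})$ survives with nonzero net exponent in $\zeta_{l}(t)$. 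I would first dispose of the ``generic'' vertices — those for which the contributing stratum is a torus of the expected dimension: there the Euler characteristic reduces to a product of $(1-q^{\,\cdot})$-type expressions that never vanish, exactly as in the Denef–Hoornaert computation reproduced in the proof of Theorem~\ref{theorem7}, and on the zeta side the residue is a nonzero point count, so the two sides are in bijection term by term.

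The main obstacle — as in every monodromy-type conjecture — is potential cancellation at the remaining, non-generic vertices: a vertex may carry a candidate pole whose residue vanishes in $Z_{\Phi}$, and one must show this happens precisely when its cyclotomic factor also cancels against a neighbouring one in $\zeta_{l}(t)$. For $l=1$ this is the classical difficulty, resolved for Newton-non-degenerate singularities only through a delicate analysis of the so-called $B_{1}$-faces; here one needs a \emph{relative} analogue, exploiting the convenience hypothesis and the non-degeneracy of both $V^{(l)}$ and $V^{(l-1)}$ to control the interaction between $\Gamma_{l}$ and $\Gamma_{1},\dots,\Gamma_{l-1}$, which enters $c(a)$ only through the combination $\sigma(a)-\sum_{j<l}d(a,\Gamma_{j})$. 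I expect the correct statement to be that a non-generic strictly positive vertex yields a genuine pole only when it lies on a facet of $\Gamma(\boldsymbol{f})$ whose face function, via Oka's relative A'Campo-type computation, already forces the corresponding root of unity to appear as a monodromy eigenvalue; pinning this down rigorously — as opposed to the routine toric bookkeeping surrounding it — is the step I expect not to go through by formal manipulation alone, and it is where the substance of a proof would lie.
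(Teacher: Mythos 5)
The statement you are trying to prove is labeled as a \emph{Conjecture} in the paper, not a theorem, and the paper offers no proof of it. The only evidence given is the single worked example in Section~7, where the real parts of the poles of $Z_0(s,\boldsymbol{f})$ are computed explicitly to be $\{-1,-3/8,-1/3\}$ and compared against Oka's formula $\zeta_2(t)=(1-t^6)(1-t^8)^3$ by direct inspection. Your proposal is therefore not a proof, and you say as much in your final paragraph; but as a diagnosis of where a proof would have to go it is on the right track.

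Your reduction to a face-by-face comparison between the candidate-pole list of Theorem~\ref{theorem1} and Oka's cyclotomic product for $\zeta_l(t)$ is indeed the natural starting point, and your identification of the obstruction — cancellation of residues at non-generic strictly positive vertices, together with the possible cancellation of the matching cyclotomic factor in $\zeta_l(t)$ — is exactly right: it is the relative analogue of the so-called $B_1$-face problem in the classical $l=1$ monodromy conjecture, which itself is open in full generality (and, in the Newton-non-degenerate case, is resolved only in low dimension). The paper neither claims nor contains a resolution of this difficulty. So the step you flag at the end as the place ``where the substance of a proof would lie'' is not a gap in your writeup; it is the open content of the conjecture itself, and no amount of toric bookkeeping of the kind carried out in Theorems~\ref{theorem1} and~\ref{theorem7} is known to close it.
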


\section{Examples}

\subsection{\label{example1}Example}

We set $\boldsymbol{f}=\left(  f_{1},f_{2}\right)  \ $\ with $f_{2}\left(
x,y,z\right)  =x^{8}+y^{8}+z^{8}+x^{2}y^{2}z^{2}$ and $f_{1}\left(
x,y,z\right)  \allowbreak=x+y-z$. Then $\boldsymbol{f}$\ is strongly
non-degenerate over $\mathbb{C}$, and by our previous remarks, $\boldsymbol{f}%
$ is strongly non-degenerate over $\mathbb{F}_{p}$, for $p$ big enough. We
compute
\[
Z_{0}\left(  s,\boldsymbol{f}\right)  =Z_{0}\left(  s\right)  :=%
%TCIMACRO{\tint \limits_{\left(  p\mathbb{Z}_{p}\right)  ^{3}}}%
%BeginExpansion
{\textstyle\int\limits_{\left(  p\mathbb{Z}_{p}\right)  ^{3}}}
%EndExpansion
\left\vert f_{2}\left(  x,y,z\right)  \right\vert _{p}^{s}\delta\left(
f_{1}\left(  x,y,z\right)  \right)  \left\vert dxdydz\right\vert .
\]
Note that $V\left(  K\right)  =\left\{  \left(  x,y,z\right)  \in
\mathbb{Q}_{p}^{3}\mid f_{1}\left(  x,y,z\right)  =f_{2}\left(  x,y,z\right)
=0\right\}  $ is a hyperplane section, by eliminating $z$, we get
$g(x,y):=x^{8}+y^{8}+\left(  x+y\right)  ^{8}+x^{2}y^{2}\left(  x+y\right)
^{2}$ which is a degenerate curve with respect to its Newton polyhedron.
Furthermore, by using the fact that $W\left(  K\right)  =\left\{  \left(
x,y,z\right)  \in\mathbb{Q}_{p}^{3}\mid f_{1}\left(  x,y,z\right)  =0\right\}
$ is a submanifold of $\mathbb{Q}_{p}^{3}$, we have
\[
Z_{0}\left(  s\right)  =%
%TCIMACRO{\tint \limits_{\left(  p\mathbb{Z}_{p}\right)  ^{2}}}%
%BeginExpansion
{\textstyle\int\limits_{\left(  p\mathbb{Z}_{p}\right)  ^{2}}}
%EndExpansion
\left\vert g(x,y)\right\vert _{p}^{s}\left\vert dxdy\right\vert .
\]

In the calculation we use the dual diagram of $\Gamma\left(  \boldsymbol{f}%
\right)  $ (i.e., the set of normal vectors to the facets of $\Gamma\left(
\boldsymbol{f}\right)  $) and a simple polyhedral subdivision $\Sigma^{\ast
}\left(  \boldsymbol{f}\right)  =\Sigma^{\ast}=\left\{  \Delta_{i}\right\}
_{i}$ subordinate to $\Gamma\left(  \boldsymbol{f}\right)  $ is in figure
1.7.1 of \ \cite[p. 83]{O2}

The vertices (i.e., normal vectors to the facets of $\Gamma\left(
\boldsymbol{f}\right)  $) are as follows: $E_{1}=\left(  1,0,0\right)  $,
$E_{2}=\left(  0,1,0\right)  $, $E_{3}=\left(  0,0,1\right)  $, $P=\left(
1,1,1\right)  $, $P_{1}=\left(  2,1,1\right)  $, $P_{2}=\left(  1,2,1\right)
$, $P_{3}=\left(  1,1,2\right)  $.

For $A\subset\mathbb{R}_{+}^{3}$, we set $E_{A}=\left\{  \left(  x,y,z\right)
\in\mathbb{Z}_{p}^{3}\mid\left(  ord\left(  x\right)  ,ord\left(  y\right)
,ord\left(  z\right)  \right)  \in A\right\}  $ as before. Since we have a
disjoint union $\left(  \mathbb{R}_{>0}\right)  ^{3}=\cup_{\substack{\Delta
\in\Sigma^{\ast}\\b\left(  \Delta\right)  \succ0}}\Delta$, we have
\begin{align*}
Z_{0}\left(  s,\boldsymbol{f}\right)   &  =%
%TCIMACRO{\tsum \limits_{\substack{\Delta_{i}\in\Sigma^{\ast}\\b(\Delta
%_{i})\succ0}}}%
%BeginExpansion
{\textstyle\sum\limits_{\substack{\Delta_{i}\in\Sigma^{\ast}\\b(\Delta
_{i})\succ0}}}
%EndExpansion
\text{ }%
%TCIMACRO{\tint \limits_{E_{\Delta_{i}}}}%
%BeginExpansion
{\textstyle\int\limits_{E_{\Delta_{i}}}}
%EndExpansion
\left\vert f_{2}\left(  x,y,z\right)  \right\vert _{p}^{s}\delta\left(
f_{1}\left(  x,y,z\right)  \right)  \left\vert dxdydz\right\vert \\
&  =:%
%TCIMACRO{\tsum \limits_{i=1}^{25}}%
%BeginExpansion
{\textstyle\sum\limits_{i=1}^{25}}
%EndExpansion
Z_{0,\Delta_{i}}\left(  s,\boldsymbol{f}\right)  ,
\end{align*}
where the cones are defined below.

\textbf{Claim 1}. \textit{If }$\Delta_{i}$\textit{\ has dimension }%
$3$\textit{, then }$Z_{0,\Delta_{i}}\left(  s,\boldsymbol{f}\right)  =0$.

We consider first the particular case of the cone $\Delta_{1}$ spanned by
$E_{1}$, $P_{1}$, $E_{3}$. In this case $E_{\Delta_{1}}$ equals
\[%
%TCIMACRO{\tbigcup \limits_{a,b,c\in\mathbb{N}\smallsetminus\left\{  0\right\}
%}}%
%BeginExpansion
{\textstyle\bigcup\limits_{a,b,c\in\mathbb{N}\smallsetminus\left\{  0\right\}
}}
%EndExpansion
\left\{  \left(  x,y,z\right)  \in\mathbb{Z}_{p}^{3}\mid x=p^{a+2b}%
u,y=p^{b}v,z=p^{b+c}w\text{, }u,v,w\in\mathbb{Z}_{p}^{\times}\right\}  .
\]
Then $Z_{0,\Delta_{1}}\left(  s,\boldsymbol{f}\right)  $\ can be expressed as
\[%
%TCIMACRO{\tsum \limits_{a=1}^{\infty}}%
%BeginExpansion
{\textstyle\sum\limits_{a=1}^{\infty}}
%EndExpansion
\text{ }%
%TCIMACRO{\tsum \limits_{b=1}^{\infty}}%
%BeginExpansion
{\textstyle\sum\limits_{b=1}^{\infty}}
%EndExpansion
\text{ }%
%TCIMACRO{\tsum \limits_{c=1}^{\infty}}%
%BeginExpansion
{\textstyle\sum\limits_{c=1}^{\infty}}
%EndExpansion
\text{ }%
%TCIMACRO{\tint \limits_{p^{a+2b}\mathbb{Z}_{p}^{\times}\times p^{b}%
%\mathbb{Z}_{p}^{\times}\times p^{b+c}\mathbb{Z}_{p}^{\times}}}%
%BeginExpansion
{\textstyle\int\limits_{p^{a+2b}\mathbb{Z}_{p}^{\times}\times p^{b}%
\mathbb{Z}_{p}^{\times}\times p^{b+c}\mathbb{Z}_{p}^{\times}}}
%EndExpansion
\left\vert f_{2}\left(  x,y,z\right)  \right\vert _{p}^{s}\delta\left(
f_{1}\left(  x,y,z\right)  \right)  \left\vert dxdydz\right\vert .
\]
By taking
\[
\left\{
\begin{array}
[c]{l}%
x=p^{a+2b}u\\
y=p^{b}v\\
z=p^{b+c}w
\end{array}
\right.  ,\text{ }%
\begin{array}
[c]{cc}%
\left\vert dxdydz\right\vert =p^{a-4b-c} & \left\vert dudvdw\right\vert
\end{array}
,
\]
with $u,v,w\in\mathbb{Z}_{p}^{\times}$ as a change of variables in the
previous integral, $Z_{\Delta_{1}}\left(  s,\boldsymbol{f}\right)  $ becomes%
\[%
%TCIMACRO{\tsum \limits_{a=1}^{\infty}}%
%BeginExpansion
{\textstyle\sum\limits_{a=1}^{\infty}}
%EndExpansion
\text{ }%
%TCIMACRO{\tsum \limits_{b=1}^{\infty}}%
%BeginExpansion
{\textstyle\sum\limits_{b=1}^{\infty}}
%EndExpansion
\text{ }%
%TCIMACRO{\tsum \limits_{c=1}^{\infty}}%
%BeginExpansion
{\textstyle\sum\limits_{c=1}^{\infty}}
%EndExpansion
\text{ }p^{-a-4b-c-8bs}%
%TCIMACRO{\tint \limits_{\left(  \mathbb{Z}_{p}^{\times}\right)  ^{3}}}%
%BeginExpansion
{\textstyle\int\limits_{\left(  \mathbb{Z}_{p}^{\times}\right)  ^{3}}}
%EndExpansion
\left\vert \widetilde{f}_{2}\left(  u,v,w\right)  \right\vert _{p}^{s}%
\delta\left(  p^{b}\widetilde{f}_{1}\left(  u,v,w\right)  \right)  \left\vert
dudvdw\right\vert ,
\]
where $\widetilde{f}_{1}\left(  u,v,w\right)  =v+p^{a+b}u-p^{c}w$,
$\widetilde{f}_{2}\left(  u,v,w\right)  =v^{8}+p^{8a+8b}u^{8}+p^{8c}%
w^{8}+p^{2a+2c}u^{2}v^{2}w^{2}$. Since
\[
\delta\left(  p^{b}\widetilde{f}_{1}\left(  u,v,w\right)  \right)  =0\text{
for every }\left(  u,v,w\right)  \in\left(  \mathbb{Z}_{p}^{\times}\right)
^{3}\text{,}%
\]
we conclude that $Z_{\Delta_{1}}\left(  s,\boldsymbol{f}\right)  =0$.

Let $\Delta_{i}$ be an arbitrary cone of dimension three. For any $a\in
\Delta_{i}$, $F\left(  a,\Gamma_{1}\right)  $ consists of a monomial, say $x$,
then $\widetilde{f}_{1}\left(  u,v,w\right)  =u+p^{c(a)}u-p^{c^{\prime}(a)}w$,
with $c(a)$, $c^{\prime}(a)\in\mathbb{N\smallsetminus}\left\{  0\right\}  $.
Note that the only monomials we obtain are $x$, $y$, and $z$. We now use the
previous argument to conclude that $Z_{0,\Delta_{i}}\left(  s,\boldsymbol{f}%
\right)  =0$.

\textbf{Claim 2}.\textit{\ If }$\Delta_{i}$\textit{\ is a cone of dimension
two, then the possible values of }$Z_{0,\Delta_{i}}\left(  s,\boldsymbol{f}%
\right)  $\textit{\ are as follow:}

\begin{center}%
\begin{tabular}
[c]{|l|l|l|}\hline
$\text{Cones}$ & $\text{Generators}$ & $Z_{0,\Delta_{i}}\left(
s,\boldsymbol{f}\right)  $\\\hline
$\Delta_{10}$ & $E_{3},P_{1}$ & $0$\\\hline
$\Delta_{11}$ & $E_{1},P_{1}$ & $p^{-1}\left(  1-p^{-1}\right)  \left(
\frac{p^{-3-8s}}{1-p^{-3-8s}}\right)  $\\\hline
$\Delta_{12}$ & $P_{1},P_{3}$ & $0$\\\hline
$\Delta_{13}$ & $P_{3},E_{3}$ & $p^{-1}\left(  1-p^{-1}\right)  \left(
\frac{p^{-3-8s}}{1-p^{-3-8s}}\right)  $\\\hline
$\Delta_{14}$ & $P_{3},E_{2}$ & $0$\\\hline
$\Delta_{15}$ & $P_{3},P_{2}$ & $0$\\\hline
$\Delta_{16}$ & $P_{3},P$ & $\left(  1-p^{-1}\right)  ^{2}\left(
\frac{p^{-3-8s}}{1-p^{-3-8s}}\right)  \left(  \frac{p^{-2-6s}}{1-p^{-2-6s}%
}\right)  $\\\hline
$\Delta_{17}$ & $P,P_{1}$ & $\left(  1-p^{-1}\right)  ^{2}\left(
\frac{p^{-3-8s}}{1-p^{-3-8s}}\right)  \left(  \frac{p^{-2-6s}}{1-p^{-2-6s}%
}\right)  $\\\hline
$\Delta_{18}$ & $P_{1},P_{2}$ & $0$\\\hline
$\Delta_{19}$ & $P,P_{2}$ & $\left(  1-p^{-1}\right)  ^{2}\left(
\frac{p^{-3-8s}}{1-p^{-3-8s}}\right)  \left(  \frac{p^{-2-6s}}{1-p^{-2-6s}%
}\right)  $\\\hline
$\Delta_{20}$ & $P_{2},E_{1}$ & $0$\\\hline
$\Delta_{21}$ & $P_{2},E_{2}$ & $p^{-1}\left(  1-p^{-1}\right)  \left(
\frac{p^{-3-8s}}{1-p^{-3-8s}}\right)  $\\\hline
\end{tabular}

\end{center}

By using the same reasoning \ as in the calculation of $Z_{0,\Delta_{1}%
}\left(  s,\boldsymbol{f}\right)  $, we have $Z_{0,\Delta_{i}}\left(
s,\boldsymbol{f}\right)  =0$, for $i=10,12,14,18,20$. We now compute
$Z_{0,\Delta_{11}}\left(  s,\boldsymbol{f}\right)  $. By using the same
reasoning and notation as in the calculation of $Z_{0,\Delta_{1}}\left(
s,\boldsymbol{f}\right)  $, one gets the following expansion for
$Z_{0,\Delta_{11}}\left(  s,\boldsymbol{f}\right)  $:%
\[%
%TCIMACRO{\tsum \limits_{a=1}^{\infty}}%
%BeginExpansion
{\textstyle\sum\limits_{a=1}^{\infty}}
%EndExpansion
\text{ }%
%TCIMACRO{\tsum \limits_{b=1}^{\infty}}%
%BeginExpansion
{\textstyle\sum\limits_{b=1}^{\infty}}
%EndExpansion
\text{ }p^{-a-4b-8bs}%
%TCIMACRO{\tint \limits_{\left(  \mathbb{Z}_{p}^{\times}\right)  ^{3}}}%
%BeginExpansion
{\textstyle\int\limits_{\left(  \mathbb{Z}_{p}^{\times}\right)  ^{3}}}
%EndExpansion
\left\vert \widetilde{f}_{2}\left(  u,v,w\right)  \right\vert _{p}^{s}%
\delta\left(  p^{b}\widetilde{f}_{1}\left(  u,v,w\right)  \right)  \left\vert
dudvdw\right\vert ,
\]
where $\widetilde{f}_{1}\left(  u,v,w\right)  =v-w+p^{a+b}u$, $\widetilde
{f}_{2}\left(  u,v,w\right)  =v^{8}+w^{8}+p^{8a+8b}u^{8}+p^{2a}u^{2}v^{2}%
w^{2}$. Since
\[
\delta\left(  p^{b}\widetilde{f}_{1}\left(  u,v,w\right)  \right)  =0\text{
for }\left(  u,v,w\right)  \notin\left\{  \left(  u,v,w\right)  \in\left(
\mathbb{Z}_{p}^{\times}\right)  ^{3}\mid w=v+p^{a+b}u\right\}  :=Y,
\]
and $\left.  \left\vert \widetilde{f}_{2}\left(  u,v,w\right)  \right\vert
_{p}^{s}\right\vert _{Y}=1$, and assuming $p>2$, \ one gets%
\[
Z_{0,\Delta_{11}}\left(  s,\boldsymbol{f}\right)  =%
%TCIMACRO{\tsum \limits_{a=1}^{\infty}}%
%BeginExpansion
{\textstyle\sum\limits_{a=1}^{\infty}}
%EndExpansion
\text{ }%
%TCIMACRO{\tsum \limits_{b=1}^{\infty}}%
%BeginExpansion
{\textstyle\sum\limits_{b=1}^{\infty}}
%EndExpansion
\text{ }p^{-a-4b-8bs}%
%TCIMACRO{\tint \limits_{\left(  \mathbb{Z}_{p}^{\times}\right)  ^{3}}}%
%BeginExpansion
{\textstyle\int\limits_{\left(  \mathbb{Z}_{p}^{\times}\right)  ^{3}}}
%EndExpansion
\delta\left(  p^{b}\widetilde{f}_{1}\left(  u,v,w\right)  \right)  \left\vert
dudvdw\right\vert .
\]
Fix a set of representatives $\mathcal{R}$ of $\ \left(  \mathbb{F}%
_{p}^{\times}\right)  ^{3}$ in $\left(  \mathbb{Z}_{p}^{\times}\right)  ^{3}$.
Then
\[%
%TCIMACRO{\tint \limits_{\left(  \mathbb{Z}_{p}^{\times}\right)  ^{3}}}%
%BeginExpansion
{\textstyle\int\limits_{\left(  \mathbb{Z}_{p}^{\times}\right)  ^{3}}}
%EndExpansion
\delta\left(  p^{b}\widetilde{f}_{1}\left(  u,v,w\right)  \right)  \left\vert
dudvdw\right\vert
\]
is a finite sum of integrals of type
\begin{align*}
&
%TCIMACRO{\tint \nolimits_{\xi+p\mathbb{Z}_{p}^{3}}}%
%BeginExpansion
{\textstyle\int\nolimits_{\xi+p\mathbb{Z}_{p}^{3}}}
%EndExpansion
\delta\left(  p^{b}\widetilde{f}_{1}\left(  u,v,w\right)  \right)  \left\vert
dudvdw\right\vert \\
&  =p^{-3}%
%TCIMACRO{\tint \nolimits_{\mathbb{Z}_{p}^{3}}}%
%BeginExpansion
{\textstyle\int\nolimits_{\mathbb{Z}_{p}^{3}}}
%EndExpansion
\delta\left(  p^{b}\widetilde{f}_{1}\left(  \xi_{1}+pu,\xi_{2}+pv,\xi
_{3}+pw\right)  \right)  \left\vert dudvdw\right\vert ,
\end{align*}
where $\xi=\left(  \xi_{1},\xi_{2},\xi_{3}\right)  \in\mathcal{R}$, and
$\overline{\xi}\in\left(  \mathbb{F}_{p}^{\times}\right)  ^{3}$. By taking%
\[
\left\{
\begin{array}
[c]{l}%
z_{1}=\frac{\widetilde{f}_{1}\left(  \xi_{1}+pu,\xi_{2}+pv,\xi_{3}+pw\right)
-\widetilde{f}_{1}\left(  \xi_{1},\xi_{2},\xi_{3}\right)  }{p}\\
z_{2}=u\\
z_{3}=w
\end{array}
\right.  ,
\]
as a change of variables in the previous integral, and using \ the implicit
function theorem (see e.g. \cite[Lemma 7.4.3]{I2}) one gets%
\[
p^{-3}%
%TCIMACRO{\tint \nolimits_{\mathbb{Z}_{p}}}%
%BeginExpansion
{\textstyle\int\nolimits_{\mathbb{Z}_{p}}}
%EndExpansion
\delta\left(  p^{b}\left(  pz_{1}+\widetilde{f}_{1}\left(  \xi_{1},\xi_{2}%
,\xi_{3}\right)  \right)  \right)  \left\vert dz_{1}\right\vert .
\]
If $\overline{\widetilde{f}_{1}\left(  \xi_{1},\xi_{2},\xi_{3}\right)  }\neq
0$, then the integral is zero. Now if $\overline{\widetilde{f}_{1}\left(
\xi_{1},\xi_{2},\xi_{3}\right)  }=0$ (note that this equation has $\left(
p-1\right)  ^{2}$\ solutions in $\left(  \mathbb{F}_{p}^{\times}\right)  ^{3}%
$), the integral becomes
\[
p^{-3}%
%TCIMACRO{\tint \nolimits_{\mathbb{Z}_{p}}}%
%BeginExpansion
{\textstyle\int\nolimits_{\mathbb{Z}_{p}}}
%EndExpansion
\delta\left(  p^{b+1}z_{1}\right)  \left\vert dz_{1}\right\vert =p^{-2+b}.
\]
Therefore
\begin{align*}
Z_{0,\Delta_{11}}\left(  s,\boldsymbol{f}\right)   &  =p^{-2}\left(
p-1\right)  ^{2}%
%TCIMACRO{\tsum \limits_{b=1}^{\infty}}%
%BeginExpansion
{\textstyle\sum\limits_{b=1}^{\infty}}
%EndExpansion
\text{ }p^{-a-3b-8bs}\\
&  =p^{-1}\left(  1-p^{-1}\right)  \frac{p^{-3-8s}}{1-p^{-3-8s}}.
\end{align*}
The other integrals are calculated in a similar form, as well as the following ones.

\textbf{Claim 3}.\textit{\ If }$\Delta_{i}$\textit{\ is a cone of dimension
one, then the possible values of }$Z_{0,\Delta_{i}}\left(  s,\boldsymbol{f}%
\right)  $\textit{\ are as follow:}

\begin{center}%
\begin{tabular}
[c]{|l|l|l|}\hline
$\text{Cones}$ & $\text{Generators}$ & $Z_{0,\Delta_{i}}\left(
s,\boldsymbol{f}\right)  $\\\hline
$\Delta_{22}$ & $P$ & $\left(  1-p^{-1}\right)  ^{2}\left(  \frac{p^{-2-2s}%
}{1-p^{-2-2s}}\right)  $\\\hline
$\Delta_{23}$ & $P_{1}$ & $%
\begin{array}
[c]{c}%
\left(  \left(  p-1\right)  ^{3}-N\right)  p^{-2}\left(  \frac{p^{-3-8s}%
}{1-p^{-3-8s}}\right)  +\\
Np^{-s+2}\left(  \frac{1-p^{-1}}{1-p^{-s-1}}\right)  \left(  \frac{p^{-3-8s}%
}{1-p^{-3-8s}}\right)
\end{array}
$\\\hline
$\Delta_{24}$ & $P_{2}$ & $%
\begin{array}
[c]{c}%
\left(  \left(  p-1\right)  ^{3}-N\right)  p^{-2}\left(  \frac{p^{-3-8s}%
}{1-p^{-3-8s}}\right)  +\\
Np^{-s+2}\left(  \frac{1-p^{-1}}{1-p^{-s-1}}\right)  \left(  \frac{p^{-3-8s}%
}{1-p^{-3-8s}}\right)
\end{array}
$\\\hline
$\Delta_{25}$ & $P_{3}$ & $%
\begin{array}
[c]{c}%
\left(  \left(  p-1\right)  ^{3}-N\right)  p^{-2}\left(  \frac{p^{-3-8s}%
}{1-p^{-3-8s}}\right)  +\\
Np^{-s+2}\left(  \frac{1-p^{-1}}{1-p^{-s-1}}\right)  \left(  \frac{p^{-3-8s}%
}{1-p^{-3-8s}}\right)
\end{array}
$\\\hline
\end{tabular}

\end{center}

where $N=\left(  p-1\right)  ^{2}\left\{  1+\left(  \frac{-2}{p}\right)
\right\}  $, $p>2$, and $\left(  \frac{\cdot}{p}\right)  $ is the Legendre symbol.

The real parts of the poles of $Z_{0}\left(  s,\boldsymbol{f}\right)  $ belong
to the set $\left\{  -1,\frac{-3}{8},\frac{-1}{3}\right\}  $; $\beta
_{\boldsymbol{f}}=\frac{-1}{3}$, and the multiplicity of the corresponding
pole is one.

On the other hand, $\varsigma_{2}\left(  t\right)  =\left(  1-t^{6}\right)
\left(  1-t^{8}\right)  ^{3}$ (see \cite[Example (I-2), pg. 220]{O2}).
Therefore the relative monodromy conjecture holds in this example.

\subsection{\label{Example2}Example}

We set $\boldsymbol{f}=\left(  f_{1},f_{2}\right)  $ with $f_{1}%
(x,y)=x^{n}+y^{n}$, $f_{2}(x,y)=x^{4}+y^{4}+xy$. Then $\boldsymbol{f}$\ is
strongly non-degenerate at the origin over $\mathbb{C}$, and by our previous
remarks, $\boldsymbol{f}$\ is strongly non-degenerate at the origin over
$\mathbb{F}_{p}$, for $p$ big enough. We compute $\mathcal{Z}_{0}%
(s,V^{(1)},f_{2})$, by using Theorem \ref{theorem7} and Remark \ref{Remark12}.
We use the simplicial polyhedral subdivision $\Sigma^{\ast}\left(
\boldsymbol{f}\right)  $ subordinate to $\Gamma\left(  \boldsymbol{f}\right)
$ showed in figure 2. \ The vertices are as follows: $E_{1}=(0,1)$,
$P_{1}=(1,3)$, $P_{2}=(1,1)$, $P_{3}=(3,1)$, $E_{2}=(1,0)$. 

Only the cone generated by $P_{2}$ gives a non-zero contribution to the local
zeta function,%

\[
\mathcal{Z}_{0}\left(  s,V^{(1)},f_{2}\right)  =\frac{card\left(  \left\{
\left(  x,y\right)  \in\left(  \mathbb{F}_{p}^{\times}\right)  ^{2}\mid
x^{n}+y^{n}=0\right\}  \right)  p^{-2s-3+n}}{1-p^{-2s-2+n}}.
\]
Note that the real parts of the poles of $\mathcal{Z}_{0}\left(
s,V^{(1)},f_{2}\right)  $ are zero or positive depending if $n=2$ or if $n>2$.


\begin{thebibliography}{99}                                                                                               %


\bibitem {D-F}D\c {a}browski, Romuald, Fisher, Benji, A stationary phase
formula for exponential sums over $\mathbb{Z}/p^{m}\mathbb{Z}$ and
applications to $GL(3)-$Kloosterman sums, Acta Arith. 80 (1997), no. 1, 1--48.

\bibitem {D0}Denef J., Report on Igusa's Local Zeta Function, S\'{e}minaire
Bourbaki 43 (1990-1991), exp. 741; Ast\'{e}risque 201-202-203 (1991), 359-386.
Available at http://www.wis.kuleuven.ac.be/algebra/denef.html.

\bibitem {D1}Denef J., The rationality of the Poincar\'{e} series associated
to the $p-$adic points on a variety, Invent. Math. 77 (1984), 1--23.

\bibitem {D1a}Denef J., Poles \ of $p$-adic complex powers \ and Newton
polyhedra, Nieuw. Arch. Wisk. 13 (1995), 289-295.

\bibitem {D-H}Denef J. and \ Hoornaert K., Newton polyhedra and Igusa's local
zeta function, J. Number Theory 89 (2001), 31-64.

\bibitem {G-S}Gel'fand \ I. M., Shilov G.E., Generalized Functions, vol 1.,
Academic Press, New York and London, 1977.

\bibitem {Has}Hashimoto, N., Asymptotic \ expansion of an ocillanting integral
on a hypersurface, J. Math. Soc. Japan 47 (1995), no. 3, 441-473.

\bibitem {I1}Igusa J-I., Some aspects of the arithmetic theory of polynomials.
Discrete groups in geometry and analysis (New Haven, Conn., 1984), 20--47,
Progr. Math., 67, Birkh\"{a}user Boston, Boston, MA, 1987.

\bibitem {I2}Igusa J.-I., An introduction \ to the theory of local zeta
functions, AMS/IP Studies in Advanced Mathematics, 2000.

\bibitem {Kazh}Kazhdan, David, An algebraic integration. Mathematics:
frontiers and perspectives, 93--115, Amer. Math. Soc., Providence, RI, 2000.

\bibitem {Ka}Katz, Nicholas M., Travaux de Laumon, S\'{e}minaire Bourbaki,
Vol. 1987/88. Ast\'{e}risque No. 161-162 (1988), Exp. No. 691, 4, 105--132 (1989).

\bibitem {K-M-S}Kempf G., Knudsen F., Mumford D., Saint-Donat B., Toroidal
embeddings, Lectures notes in Mathematics vol. 339, Springer-Verlag, 1973.

\bibitem {K}Khovanskii A. G., Newton polyhedra, and toroidal varieties.
(Russian) Funkcional. Anal. i Prilozhen. 11 (1977), no. 4, 56--64.

\bibitem {Kir}Kirillov A. N., The zeta function of the monodromy of a singular
point of a complete intersection. (Russian). Analytic number theory and the
theory of functions, 4. Zap. Nauchn. Sem. Leningrad. Otdel. Mat. Inst.
Steklov. (LOMI) 112 (1981), 112--120, 201.

\bibitem {L-M}Lichtin B., \ Meuser D., \ Poles of local zeta functions and
Newton polygons, \ Compositio Math. \ 55 (1985), 313-332.

\bibitem {M1}Meuser D., On the poles of a local zeta function for curves,
Invent. Math. 73 (1983), 445-465.

\bibitem {Mo}Morales, M., Poly\`{e}dre de Newton et genre g\'{e}om\'{e}trique
d'une singularit\'{e} intersection compl\`{e}te. Bull. Soc. Math. France 112
(1984), no. 3, 325--341.

\bibitem {Mor}Moreno, Carlos Julio, \ Exponential sums and stationary phase.
I. Coding theory, cryptography and related areas (Guanajuato, 1998), 195--208,
Springer, Berlin, 2000.

\bibitem {O1}Oka, Mutsuo, Principal zeta-function of nondegenerate
singularity. J. Fac. Sci. Univ. Tokyo Sect. IA Math. 37 (1990), no. 1, 11--32.

\bibitem {O2}Oka, Mutsuo, Non-degenerate Complete Intersection Singularity.
Actualit\'{e}s Math\'{e}matiques. [Current Mathematical Topics] Hermann,
Paris, 1997. viii+309 pp.

\bibitem {S-Z}Saia M.J. and Zuniga-Galindo W.A., Local zeta functions for
curves, non-degeneracy conditions and Newton polygons, Trans. Amer. Math.
Soc., 357 (2005), 59-88.

\bibitem {Ser}Serre, Jean-Pierre, Lie Algebras and Lie Groups. W. A. Benjamin,
Inc., New York, Amsterdam, 1968.

\bibitem {St}Sturmfels, Bernd, Gr\"{o}bner bases and convex polytopes.
University Lecture Series, 8. American Mathematical Society, Providence, RI, 1996.

\bibitem {Var}Varchenko A., Newton polyhedra and estimation of oscillating
integrals, \ Funct. Anal. Appl. 10 (1976), 175-196.

\bibitem {Var1}Varchenko A. N., Zeta-function of monodromy and Newton's
diagram, Invent. Math. 37 (1976), no. 3, 253--262.

\bibitem {V1a}Veys W., \ On the poles of Igusa's local zeta functions for
curves, J. Lond. Math. Soc., 41 (1990), 27-32.

\bibitem {V2}Veys W., Poles of Igusa's local zeta function and monodromy,
Bull. Soc. Math. Fr., \ 121 (1993), 545-598.

\bibitem {V-Z}Veys Willem, Z\'{u}\~{n}iga-Galindo W. A., Zeta functions for
analytic mappings, log-principalization of ideals, and Newton polyhedra, To
appear in Trans. Amer. Math. Soc.

\bibitem {Z0}Z\'{u}\~{n}iga-Galindo W. A., Igusa's local zeta functions of
semiquasihomogeneous polynomials. Trans. Amer. Math. Soc. 353 (2001) 3193-3207.

\bibitem {Z1}Zuniga-Galindo W.A., Local zeta functions and Newton polyhedra,
Nagoya Math. J. 172 (2003), 31-58.

\bibitem {Z1A}Zuniga-Galindo W. A., \ Exponential sums along $p$-adic curves,
Finite Fields Appl. 9 (2003), no. 2, 140--149.

\bibitem {Z2}Zuniga-Galindo W.A., On the poles of Igusa's local zeta function
for algebraic sets, Bull. London Math. Soc. 36 (2004), 310-320.
\end{thebibliography}
\end{document}